\DeclareMathOperator{\Ann}{Ann}
\DeclareMathOperator{\Pow}{Pow}
\DeclareMathOperator{\Cell}{Cell}
\begin{document}

%user defined functions
\newcommand{\card}[1]{\ensuremath{\left |  #1 \right |}}
\newcommand{\ep}{\ensuremath{\;\Box}}
\newcommand{\N}[2]{\ensuremath{N^{#1}_{#2}}}
\newcommand{\al}{\ensuremath{\alpha}}
\newcommand{\be}{\ensuremath{\beta}}
\newcommand{\ga}{\ensuremath{\gamma}}
\newcommand{\de}{\ensuremath{\delta}}
\newcommand{\la}{\ensuremath{\lambda}}
\newcommand{\ob}[1]{\ensuremath{\overline{#1}}}
\newcommand{\acl}{\ensuremath{\mbox{acl}}}
\newcommand{\RR}{\ensuremath{\mathbb{R}}}
\newcommand{\NN}{\ensuremath{\mathbb{N}}}
\newcommand{\ZZ}{\ensuremath{\mathbb{Z}}}
\newcommand{\CC}{\ensuremath{\mathbb{C}}}
\newcommand{\dcl}{\ensuremath{\mbox{dcl}}}
\newcommand{\mf}[1]{\ensuremath{\mathfrak{#1}}}
\newcommand{\mc}[1]{\ensuremath{\mathcal{#1}}}
\newcommand{\QQ}{\ensuremath{\mathbb{Q}}}
\newcommand{\ssk}{\smallskip}
\renewcommand{\b}{\mathfrak{b}}
\newcommand{\f}{\mathfrak{f}}
\newcommand{\ralg}{\widetilde{\mathbb{Q}}}
\newcommand{\lam}{\lambda}
\newcommand{\lek}{\leq_{K_{ap}}}
\newcommand{\kap}{K_{ap}}
\newcommand{\res}{\restriction}
\newcommand{\lta}{\restriction \lambda \times \alpha}
\newcommand{\range}{\text{range}}
\newcommand{\sptp}{\text{sptp}}
\newcommand{\spcl}{\text{spcl}}
\newcommand{\FF}{\mathbb F}
\newcommand{\andd}{\wedge}
\newcommand{\mm}{\mathfrak m}

%\newcommand{\<}{\langle}
%\newcommand{\>}{\rangle}

%Goodrick's Definitions
\newcommand{\Span}{\textup{Span}}
\newcommand{\tp}{\textup{tp}}

%Onshuus's Independence Defs.
\def\Ind#1#2{#1\setbox0=\hbox{$#1x$}\kern\wd0\hbox to 0pt{\hss$#1\mid$\hss}
\lower.9\ht0\hbox to 0pt{\hss$#1\smile$\hss}\kern\wd0}
\def\ind{\mathop{\mathpalette\Ind{}}}
\def\Notind#1#2{#1\setbox0=\hbox{$#1x$}\kern\wd0\hbox to 0pt{\mathchardef
\nn=12854\hss$#1\nn$\kern1.4\wd0\hss}\hbox to
0pt{\hss$#1\mid$\hss}\lower.9\ht0 \hbox to
0pt{\hss$#1\smile$\hss}\kern\wd0}
\def\nind{\mathop{\mathpalette\Notind{}}}
\def\thind{\mathop{\mathpalette\Ind{}}^{\text{\th}} }
\def\nthind{\mathop{\mathpalette\Notind{}}^{\text{\th}} }
\def\uth{\text{U}^{\text{\th}} }
\def\oneind{\mathop{\mathpalette\Ind{}}^{1}}
\def\twoind{\mathop{\mathpalette\Ind{}}^{2}}
\def\noneind{\mathop{\mathpalette\Notind{}}^{1}}
\def\ntwoind{\mathop{\mathpalette\Notind{}}^{2}}

%theorem environments

\newtheorem{assumps}[subsection]{Assumptions}
\newtheorem{defi}[subsection]{Definition}
\newtheorem{fact}[subsection]{Fact}
\newtheorem{example}[subsection]{Example}
\newtheorem{prop}[subsection]{Proposition}
\newtheorem{cor}[subsection]{Corollary}
\newtheorem{lem}[subsection]{Lemma}
\newtheorem{lemma}[subsection]{Lemma}
\newtheorem*{clm}{Claim}
\newtheorem{claim}[subsection]{Claim}
\newtheorem{rem}[subsection]{Remark}
\newtheorem{thm}[subsection]{Theorem}
\newtheorem{nota}[subsection]{Notation}
\newtheorem*{ques}{Question}
\newtheorem{goal}{Goal}
\newtheorem{conj}[subsection]{Conjecture}
\newtheorem*{term}{Terminology}
\newtheorem{subclaim}[subsection]{Subclaim}
\newtheorem*{facta}{Fact}
\newtheorem*{rema}{Remark}
\newtheorem{assump}{Assumption}
\newtheorem{definition}{Definition}
\newtheorem{remark}{Remark}
\newtheorem{red}{Reduction}

\title{Dp-Minimality: Basic facts and examples}

\author{Alfred Dolich, John Goodrick, and David Lippel}

\address{Department of Mathematics East Stroudsburg University, East Stroudsburg PA 18301}
\email{adolich@gmail.com}
\address{Department of Mathematics University of Maryland, College Park MD 20742}
\email{goodrick@math.umd.edu}
\address{Department of Mathematics Haverford College, Haverford PA 19041}
\email{dlippel@haverford.edu}

\date{\today}

\thanks{A portion of this paper was completed during the program ``Stable Methods in Unstable Context'' at
the Banff International Research Station.  The authors thank BIRS for their hospitality.}

\thanks{The first author would like to thank the Fields Institute for their hospitality during
the thematic program ``O-minimal Structures and Analytic Geometry'' during which a portion of this
paper was completed.}

\begin{abstract}  We study the notion of dp-minimality, beginning by providing
several essential facts about dp-minimality, establishing several equivalent definitions for dp-minimality, and
comparing dp-minimality to other minimality notions.
The majority of the rest of the paper is dedicated to examples.  We establish via
a simple proof that any weakly o-minimal theory is dp-minimal and then give an example
of a weakly o-minimal group  not obtained by adding traces of externally definable sets.  Next we give an
example of a  divisible
ordered Abelian group which is dp-minimal and not weakly o-minimal. Finally we
establish that the field of p-adic numbers is dp-minimal.
\end{abstract}

\maketitle

\section{introduction}

In this note we study many basic properties of dp-minimality as well as developing
several fundamental examples.  Dp-minimality---see Definition~\ref{basic}---was introduced by Shelah in~\cite{sh-stdep}
as possibly the strongest of a family of notions implying that a theory does not
have the independence property---for which see~\cite{sh1}.
  The study of dp-minimality beyond
Shelah's original work was continued by Onshuus and Usvyatsov in~\cite{alfdpmin} focusing
primarily on the stable case and by the second author in~\cite{gooddp} primarily in
the case of theories expanding the theory of divisible ordered Abelian groups.
Our goal in this paper is to provide many basic foundational facts on dp-minimality
as well as to explore concrete examples of dp-minimality in the ordered context as well
as the valued field context.  Much of our motivation arises out of the program of attempting
to explore the impact of abstract model theoretic notions, such as dp-minimality, in concrete
situations such as ordered model theory on the reals or the study of valued fields.  As such this
note may be seen as providing some ground work for further study in this direction---for example Pierre
Simon~\cite{simon} has recently shown that an infinite definable subset of a dp-minimal
divisible ordered Abelian group must have interior.

We give a brief outline of the content of our note.  Section~\ref{abcs} is dedicated to
 definitions and providing several relevant background facts on dp-minimality.  Many of these
facts are inherent in~\cite{sh-stdep} but we isolate them here and provide straightforward
proofs.  Section~\ref{vc} is a brief discussion of the relationship of
dp-minimality to some other minimality notions.  In Section~\ref{wom} we focus weak on o-minimality---for which
see~\cite{weakominfield}---and show that a weakly o-minimal theory is dp-minimal as well providing an example of
a weakly o-minimal group which is not obtained by expanding on o-minimal structure by
convex sets.  Work in Section 3 as well as results from~\cite{gooddp} indicate that a dp-minimal
theory expanding that of
divisible ordered Abelian groups has some similarity to a weakly o-minimal theory and we may
naturally ask whether any such theory is weakly o-minimal.  Section~\ref{nwom} provides a negative
answer via an example arising form the valued field context.  Our final section is dedicated
to showing that the theory of the p-adic field is dp-minimal.

\section{basic facts on dp-minimality}\label{abcs}

We develop several basics facts about dp-minimality.  The vast majority of the material found
below is inherent in Shelah's paper~\cite{sh-stdep}, but typically in the more general
context of strong dependence.  We provide proofs of these various facts for clarity and
ease of exposition.  Recall:

\begin{defi}\label{ict}{\em Fix a structure $\mf{M}$,  An {\em ICT pattern} in $\mf{M}$ consists of  a pair of
formulae $\phi(x, \ob{y})$ and $\psi(x,\ob{y})$; and
sequences  $\{\ob{a}_i : i \in \omega\}$ and $\{\ob{b}_i : i \in \omega\}$ from $M$ so
that for all $i,j \in \omega$ the following is consistent:
$$\phi(x,\ob{a}_i) \wedge \psi(x,\ob{b}_j) \wedge \bigwedge_{l \not =i}\neg \phi(x, \ob{a}_l)
\wedge \bigwedge_{k \not= j}\neg\psi(x,\ob{b}_k).$$}
\end{defi}

\begin{rema}{\em  Definition~\ref{ict} should more formally be referred to as an ICT pattern of {\em depth two}
but in this paper we only consider such ICT patterns and thus we omit this extra terminology.}
\end{rema}

\begin{defi}\label{basic}{\em  A theory $T$ is said to be {\em dp-minimal} if in no model
$M \models T$ is there an ICT pattern.
}
\end{defi}

It is often very convenient to use the following definition and fact.

\begin{defi}{\em We say two sequences $\{\ob{a}_i : i \in I\}$ and $\{\ob{b}_j : j \in J\}$ are {\em mutually
indiscernible} if $\{\ob{a}_i : i \in I\}$ is indiscernible over $\bigcup_{j \in J}\ob{b}_j$ and
$\{\ob{b}_j : j \in J\}$ is indiscernible over $\bigcup_{i \in I}\ob{a}_i$.  We call an ICT pattern mutually
indiscernible if the witnessing sequences are mutually indiscernible.}
\end{defi}

\begin{fact}\label{indisc} $T$ is dp-minimal if and only if in no model $\mf{M} \models T$ is there a
mutually indiscernible ICT pattern.
\end{fact}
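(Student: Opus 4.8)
The plan is to prove the two implications separately, with essentially all of the work going into one direction. One direction is immediate: a mutually indiscernible ICT pattern is in particular an ICT pattern, so if $T$ is dp-minimal --- i.e., by Definition~\ref{basic} admits no ICT pattern in any model --- then \emph{a fortiori} no model contains a mutually indiscernible one. For the converse I would argue by contraposition: assuming $T$ is not dp-minimal, I would fix an ICT pattern $\phi(x,\ob y)$, $\psi(x,\ob y)$, $\{\ob a_i : i \in \omega\}$, $\{\ob b_j : j \in \omega\}$ in some $\mf M \models T$ and manufacture from it a mutually indiscernible ICT pattern, working inside a sufficiently saturated elementary extension (which is harmless for a statement about $T$).

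The first step is to stretch the pattern. Using compactness I would produce, for any prescribed cardinals $\lambda_1 \gg \lambda_2 \gg \aleph_0$, an ICT pattern $\{\ob a_i : i < \lambda_1\}$, $\{\ob b_j : j < \lambda_2\}$ with the same $\phi,\psi$. Indeed, being an ICT pattern is equivalent to the consistency of all of its finite approximations, which one packages as a type in the long array of variables together with explicit witness variables $z_{i,j}$ for each pair; every finite fragment of that type is realized inside the original $\omega$-indexed pattern by choosing distinct representatives among the $\ob a$'s and $\ob b$'s and the corresponding original witness, so by compactness longer patterns exist.

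The second, and main, step is to extract mutually indiscernible sequences while keeping the combinatorial data. I would do this by the standard iterated extraction. Using Erd\H{o}s--Rado on the very long $a$-side, first extract a subsequence $\{\ob a'_i : i<\omega\}$ that is indiscernible over the whole set $B = \bigcup_{j<\lambda_2}\ob b_j$; this requires $\lambda_1$ large relative to $2^{|B|+|T|}$, which the stretching guarantees. (Plain Ramsey on $\omega$ does not suffice here, since there may be up to $2^{|B|}$ many types over $B$; this is exactly why genuine cardinal stretching, not mere $\omega$-indexing, is needed.) Then extract from $\{\ob b_j : j<\lambda_2\}$ a subsequence $\{\ob b'_j : j<\omega\}$ indiscernible over the now-countable set $A' = \bigcup_i \ob a'_i$. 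What reconciles the two indiscernibility requirements is monotonicity: indiscernibility over a set persists over any subset, so since $\bigcup_j \ob b'_j \subseteq B$, the sequence $\{\ob a'_i : i < \omega\}$ remains indiscernible over $\bigcup_j \ob b'_j$. Hence $\{\ob a'_i : i<\omega\}$ and $\{\ob b'_j : j<\omega\}$ are mutually indiscernible, and because each extraction only passes to subsequences, the type of every finite increasing configuration of $a'$'s and $b'$'s is realized by a corresponding configuration among the original $\ob a$'s and $\ob b$'s.

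The final step is to verify that $\phi,\psi,\{\ob a'_i : i < \omega\},\{\ob b'_j : j < \omega\}$ is still an ICT pattern. Fix $i_0,j_0<\omega$. By compactness it suffices to show that for all finite index sets $F \ni i_0$ and $G \ni j_0$ the formula $\phi(x,\ob a'_{i_0}) \andd \psi(x,\ob b'_{j_0}) \andd \bigwedge_{l \in F \setminus \{i_0\}}\neg\phi(x,\ob a'_l) \andd \bigwedge_{k \in G\setminus\{j_0\}}\neg\psi(x,\ob b'_k)$ is consistent. This formula depends only on a finite increasing configuration of $a'$'s and $b'$'s, which realizes the same type as some configuration of original $\ob a$'s and $\ob b$'s; since those form an ICT pattern the corresponding formula is consistent, and therefore so is ours. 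I expect the real obstacle of the whole argument to be precisely the simultaneous extraction in the second step --- arranging both indiscernibility conditions at once and choosing $\lambda_1,\lambda_2$ so that Erd\H{o}s--Rado applies at each stage --- rather than the bookkeeping in the first and last steps, which is routine compactness.
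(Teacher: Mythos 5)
Your overall architecture is the standard ``stretch and extract'' argument, and it is essentially what the paper's one-line proof (``a simple application of compactness and Ramsey's theorem'') is gesturing at: one direction is trivial, Step 1 (stretching via witness variables) and Step 3 (verifying the ICT conditions from realized finite configurations) are routine and correct as you present them. The flaw is in Step 2, in the words ``extract a subsequence.'' Erd\H{o}s--Rado, applied to the coloring of increasing $n$-tuples by their types over $B$, does give for each \emph{fixed} $n$ a large homogeneous literal subsequence, but it collapses the cardinality to roughly $(2^{|B|+|T|})^+$, far below the threshold needed for the next arity; so the iteration over all $n$ cannot terminate in a single infinite literal subsequence homogeneous for all arities at once. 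Indeed, the existence of an infinite subsequence homogeneous simultaneously for all finite colorings is the partition property $\kappa \to (\omega)^{<\omega}_{\mu}$, an Erd\H{o}s-cardinal property that is not a theorem of ZFC, no matter how large you stretch $\lambda_1$. The ZFC extraction lemma instead produces, by compactness, a $B$-indiscernible sequence $\{a'_i : i<\omega\}$ realizing the Ehrenfeucht--Mostowski type of the original row over $B$ --- every finite increasing configuration of $a'$'s has the same type over $B$ as some increasing configuration of original $a$'s --- but it is \emph{not} a subsequence, and likewise for the $b'$-row.

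This matters because your reconciliation of the two indiscernibility requirements rests on the literal containment $\bigcup_j b'_j \subseteq B$; with EM-extraction that containment fails, so the monotonicity argument does not apply and, as written, mutual indiscernibility is not established. The repair is standard: if every finite tuple $\overline{b}'$ from the new second row has the same type over $A' = \bigcup_i a'_i$ as some increasing tuple $\overline{b}$ from the original row, then the statement ``all increasing $n$-tuples from the $a'$-sequence satisfy the same formulas with parameters $\overline{b}$'' is part of $\tp(\overline{b}/A')$ and hence transfers to $\overline{b}'$; thus $\{a'_i\}$ is indiscernible over every finite subset of $\bigcup_j b'_j$, which suffices. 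The same two-step type transfer (replace $\overline{b}'$ by $\overline{b} \subseteq B$ over $A'$, then replace the $a'$-tuple by an original $a$-tuple over $B$) also justifies your closing claim that every finite configuration of $a'$'s and $b'$'s realizes the type of a configuration of originals, so Step 3 survives verbatim. With these substitutions your proof is correct. Note finally that your parenthetical dismissal of plain Ramsey is right for your route but not for the statement itself: each finite fragment of the type asserting a mutually indiscernible ICT pattern mentions only finitely many formulas and parameters, hence only finitely many Ramsey colors, so Ramsey plus compactness alone proves the fact without any cardinal stretching --- which is presumably the proof the paper intends.
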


\begin{proof}  This is a simple application of compactness and Ramsey's theorem. \end{proof}

Before continuing we should mention another alternative characterization of dp-minimality.  To this end we have
the following definition.

\begin{defi}{\em  A theory $T$ is said to be {\em inp-minimal} if there is {\bf no} model of $\mf{M}$ of $T$ formulae
$\phi(x, \ob{y})$ and $\psi(x, \ob{y})$, natural numbers $k_0$ and $k_1$, and sequences $\{\ob{a}_i : i \in \omega\}$ and
$\{\ob{b}_i : i \in \omega\}$ so that $\{\phi(x, \ob{a}_i) : i \in \omega\}$ is $k_0$-inconsistent, $\{\psi(x, \ob{b}_j) :
 j \in \omega\}$ is $k_1$-inconsistent and for any $i,j \in \omega$ the formula $\phi(x, \ob{a}_i) \wedge \psi(x, \ob{b}_j)$
is consistent.
}
\end{defi}

With this definition we have the following fact:

\begin{fact}{\cite[Lemma 2.11]{alfdpmin}} If $T$ does not have the independence property and is inp-minimal then $T$ is dp-minimal.
\end{fact}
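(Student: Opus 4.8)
The plan is to prove the contrapositive: assuming $T$ is NIP, I will show that if $T$ is \emph{not} dp-minimal then $T$ is not inp-minimal. So suppose $T$ is not dp-minimal. Then some model contains an ICT pattern, and by Fact~\ref{indisc} I may take it to be mutually indiscernible; fix the data $\phi(x, \ob{y})$, $\psi(x, \ob{y})$ and mutually indiscernible sequences $(\ob{a}_i : i \in \omega)$, $(\ob{b}_j : j \in \omega)$, together with, for each pair $(i,j)$, a witness $c_{i,j}$ realizing $\phi(x, \ob{a}_i) \wedge \psi(x, \ob{b}_j) \wedge \bigwedge_{l \neq i} \neg\phi(x, \ob{a}_l) \wedge \bigwedge_{k \neq j} \neg\psi(x, \ob{b}_k)$. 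The one external input I will use is the standard reformulation of NIP in terms of alternation rank: since $T$ is NIP there is $N < \omega$ such that for every indiscernible sequence $(\ob{a}_i : i \in \omega)$ and every $c$, the truth value of $\phi(c, \ob{a}_i)$ changes at most $N$ times as $i$ increases (and likewise a bound $N'$ for $\psi$).

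The key idea is to replace each row-formula by a ``difference'' formula that converts column-isolation into forced alternations. Set $\theta(x; \ob{y}, \ob{y}') := \phi(x, \ob{y}) \wedge \neg\phi(x, \ob{y}')$ and $\eta(x; \ob{y}, \ob{y}') := \psi(x, \ob{y}) \wedge \neg\psi(x, \ob{y}')$, and pair up consecutive parameters, letting $\ob{A}_i$ be the concatenation $\ob{a}_{2i}\ob{a}_{2i+1}$ and $\ob{B}_j$ the concatenation $\ob{b}_{2j}\ob{b}_{2j+1}$. I claim that $(\theta, \eta)$ together with $(\ob{A}_i)$, $(\ob{B}_j)$ form an inp-pattern. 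Cross-consistency is immediate from the ICT witnesses: $c_{2i, 2j}$ satisfies $\phi(x, \ob{a}_{2i})$ but not $\phi(x, \ob{a}_{2i+1})$, and $\psi(x, \ob{b}_{2j})$ but not $\psi(x, \ob{b}_{2j+1})$, hence realizes $\theta(x; \ob{A}_i) \wedge \eta(x; \ob{B}_j)$; in particular each row-formula is consistent.

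The heart of the argument --- and the step where NIP is indispensable --- is the $k$-inconsistency of the rows. Suppose some $c$ realized $\theta(x; \ob{A}_{i_1}), \ldots, \theta(x; \ob{A}_{i_k})$ with $i_1 < \cdots < i_k$. Then along the indiscernible sequence $(\ob{a}_l : l \in \omega)$ the formula $\phi(c, \ob{a}_l)$ is true at each position $2i_t$ and false at each position $2i_t + 1$, and since $2i_1 < 2i_1 + 1 < 2i_2 < \cdots < 2i_k + 1$ these values read $T, F, T, F, \ldots, T, F$, producing at least $2k - 1$ alternations. By the alternation bound this forces $2k - 1 \leq N$, so the $\theta$-row is $k_0$-inconsistent for any fixed $k_0$ with $2k_0 - 1 > N$; the identical computation with $N'$ gives a $k_1$ for the $\eta$-row. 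Thus $(\theta, \eta)$ with the paired sequences is a genuine inp-pattern, witnessing that $T$ is not inp-minimal, which completes the contrapositive. The only real obstacle is precisely this last inconsistency step: an ICT pattern need not have $k$-inconsistent rows on its own (it only isolates single columns), and it is exactly the finiteness of the alternation rank supplied by NIP that upgrades ``isolating a column'' into ``inconsistency of a bounded number of difference-formulas'' --- without the NIP hypothesis the construction collapses.
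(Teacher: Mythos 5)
Your proof is correct. Note that the paper itself contains no proof of this fact---it is quoted verbatim with a citation to \cite[Lemma 2.11]{alfdpmin}---so there is no in-paper argument to compare against; what you have written is essentially the standard argument (and, in substance, the one in the cited source). Both ingredients check out. Cross-consistency of $(\theta,\eta)$ with the paired parameters $\ob{A}_i=\ob{a}_{2i}\ob{a}_{2i+1}$, $\ob{B}_j=\ob{b}_{2j}\ob{b}_{2j+1}$ is immediate from the ICT witness $c_{2i,2j}$, since by definition it satisfies $\neg\phi(x,\ob{a}_l)$ for every $l\neq 2i$, in particular $l=2i+1$, and likewise for $\psi$. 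The $k$-inconsistency step is also sound: if $c$ realized $\theta(x;\ob{A}_{i_1})\wedge\cdots\wedge\theta(x;\ob{A}_{i_k})$ with $i_1<\cdots<i_k$, then since $2i_t+1<2i_{t+1}$ the positions $2i_1<2i_1+1<2i_2<\cdots<2i_k+1$ carry truth values $T,F,T,F,\ldots,T,F$ for $\phi(c,\ob{a}_l)$, giving $2k-1$ alternations along an indiscernible sequence, impossible once $2k-1>N$. The one external input you invoke---that NIP yields, for each formula, a uniform finite bound on the number of alternations of $\phi(c,\ob{a}_i)$ along any indiscernible sequence---is indeed a standard equivalent of NIP: unbounded alternation gives, via Ramsey and compactness, an indiscernible sequence on which $\phi(c,\ob{a}_i)$ alternates with $i$, and indiscernibility then lets one realize every finite truth-value pattern, producing the independence property. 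Two cosmetic remarks for a polished write-up: for the alternation bound you only need each row individually indiscernible, so the full strength of Fact~\ref{indisc} is more than enough; and since the paper's definition of inp-minimality displays $\phi$ and $\psi$ with a common parameter variable $\ob{y}$, you should note that $\theta$ and $\eta$ can be padded to a common parameter length---this changes nothing. Your closing observation is also the right diagnosis: an ICT pattern isolates columns but its rows need not be $k$-inconsistent, and NIP is exactly what converts column-isolation by difference formulas into bounded-size inconsistency.
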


The next fact is extremely useful in showing that a theory is not dp-minimal.  It is key
in establishing the relationship between dp-minimality and indiscernible sequences that follows.

\begin{fact}\label{config}  Let $T$ be a complete theory, we work in a monster model $\mf{C}$.
Suppose there are formulae $\phi_0(x, \ob{y})$ and $\phi_1(x, \ob{y})$ and
mutually indiscernible sequences $\{\ob{a}_i : i \in \omega\}$ and $\{\ob{b}_i : i \in \omega\}$ so
that \[\phi_0(x, \ob{a}_0) \wedge \neg \phi_0(x, \ob{a}_1) \wedge \phi_1(x, \ob{b}_0) \wedge
\neg \phi_1(x, \ob{b}_1) \] is consistent then $T$ is not dp-minimal.
\end{fact}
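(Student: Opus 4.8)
The plan is to produce a mutually indiscernible ICT pattern and then invoke Fact~\ref{indisc}. Since a theory with the independence property already fails to be dp-minimal (the independence property yields an ICT pattern outright), I would assume throughout that $T$ is NIP; this is the hypothesis that makes the combinatorics below go through. First I would extend the two sequences to be indexed by $\ZZ$ while preserving mutual indiscernibility, which is legitimate by compactness and Ramsey. Using that $\{\ob{a}_i\}$ is indiscernible over $\bigcup_j \ob{b}_j$ and $\{\ob{b}_j\}$ over $\bigcup_i\ob{a}_i$, every order-preserving reindexing of one sequence is induced by an automorphism of $\mf{C}$ fixing the other sequence pointwise. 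Transporting the given witness by such automorphisms shows that for all $i<j$ and all $k<l$ the formula $\phi_0(x,\ob{a}_i)\wedge\neg\phi_0(x,\ob{a}_j)\wedge\phi_1(x,\ob{b}_k)\wedge\neg\phi_1(x,\ob{b}_l)$ is consistent.

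Next I would set up the pattern. Group the sequences into consecutive pairs, $\ob{A}_i=(\ob{a}_{2i},\ob{a}_{2i+1})$ and $\ob{B}_j=(\ob{b}_{2j},\ob{b}_{2j+1})$, and define the ``drop'' formulas $\phi(x,\ob{y}_0,\ob{y}_1)=\phi_0(x,\ob{y}_0)\wedge\neg\phi_0(x,\ob{y}_1)$ and $\psi(x,\ob{z}_0,\ob{z}_1)=\phi_1(x,\ob{z}_0)\wedge\neg\phi_1(x,\ob{z}_1)$; the paired sequences remain mutually indiscernible. The consistency of each cell $\phi(x,\ob{A}_i)\wedge\psi(x,\ob{B}_j)$ is precisely the transported-witness statement from the previous step, so every cell of the prospective pattern is consistent. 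To control the rows I would use NIP: for the fixed formula $\phi_0$ there is a bound $N$ on the number of alternations of truth value of $\phi_0(c,-)$ along any indiscernible sequence, uniformly in $c$. Since the blocks $\ob{A}_i$ occupy disjoint consecutive intervals and each satisfied block forces an alternation inside its own interval, no single element can satisfy $\phi$ on more than $N$ blocks; hence $\{\phi(x,\ob{A}_i):i\in\ZZ\}$ is $(N+1)$-inconsistent, and likewise on the $\psi$ side.

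At this point the cells are consistent and the rows are finitely inconsistent, so I would have an inp-pattern in the sense of the definition above; it remains to upgrade this to the exactly-one-positive form demanded by Definition~\ref{ict}, namely to realize, for each $i$, $\phi(x,\ob{A}_i)$ together with $\neg\phi(x,\ob{A}_l)$ for all $l\neq i$ (and symmetrically for $\psi$). I expect this isolation step to be the main obstacle. The transported witness only controls $\phi_0$ at the two coordinates of a single block, so ruling out spurious drops at the remaining blocks cannot be done naively; instead one must exploit the finite alternation bound, choosing for the distinguished cell a witness realizing a maximal alternation pattern and then using the automorphisms from the first paragraph to relocate the alternation so that exactly one drop falls in the prescribed block. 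This passage from ``$k$-inconsistent'' to ``exactly one positive''—equivalently, the fact that an inp-pattern precludes dp-minimality—is the only place where the finite alternation bound is genuinely needed, and it is the heart of the argument.
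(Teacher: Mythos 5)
Your argument is sound up to the point where you construct the inp-pattern: handling the IP case separately is legitimate, the transport of the witness by automorphisms (using mutual indiscernibility) does give consistency of every cell $\phi(x,\ob{A}_i)\wedge\psi(x,\ob{B}_j)$, and the NIP alternation bound does make the rows $(N+1)$-inconsistent. But the Fact is not yet proved at that point, and the step you yourself flag as ``the heart of the argument''---passing from consistent cells with $k$-inconsistent rows to the exactly-one-positive form of Definition~\ref{ict}---is exactly what is missing, and the mechanism you sketch for it fails. An automorphism transports the \emph{entire} truth-value pattern of the witness along the sequences: if $c$ produces drops of $\phi_0$ in several blocks, its image under any index-shifting automorphism produces the same number of drops, merely relocated; so ``relocating the alternation so that exactly one drop falls in the prescribed block'' cannot eliminate the spurious drops elsewhere, and choosing a witness with a \emph{maximal} alternation pattern pushes in the wrong direction in any case. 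What is actually needed is to re-extract the sequences so that they become indiscernible \emph{over the witness}, which is precisely the move the paper makes.

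The paper's proof needs no NIP hypothesis at all. After extending the sequences to be indexed by $\ZZ$ and fixing $c$ realizing the given conjunction, it applies compactness and Ramsey to arrange that the tails $\{\ob{a}_i : i<0\}$ and $\{\ob{a}_i : i>1\}$ are indiscernible over $\bigcup_i \ob{b}_i \cup \{c\}$ (and symmetrically for the $\ob{b}$'s), while preserving mutual indiscernibility. Then the truth value of $\phi_0(c,\ob{a}_i)$ is constant on each tail, so after pairing $\ob{d}_i=\ob{a}_{2i}{}^{\frown}\ob{a}_{2i+1}$ the symmetric-difference formula $\psi_0(x,\ob{y}_0\ob{y}_1)$, namely $\phi_0(x,\ob{y}_0)\leftrightarrow\neg\phi_0(x,\ob{y}_1)$, holds of $c$ at $\ob{d}_0$ and fails at every $\ob{d}_i$ with $i\neq 0$ automatically---no alternation bound is invoked---and likewise on the $\psi_1$ side; shifting the mutually indiscernible block sequences by automorphisms then witnesses every cell of the ICT pattern. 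In short, the indiscernibility-over-$c$ extraction does all of the isolation work that your NIP bound was meant to do. Your route could only be completed by proving (or quoting) that under NIP an inp-pattern of depth two yields an ICT pattern of depth two---essentially \cite[Lemma 2.11]{alfdpmin}, already stated in Section~\ref{abcs}---but that is the very content your proposal leaves unproved.
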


\begin{proof} By compactness there are mutually indiscernible sequences $\ob{a}_i$ for $i \in \ZZ$ and
$\ob{b}_i$ for $i \in \ZZ$ and $c$ so that
\[ \models \phi_0(c, \ob{a}_0) \wedge \neg \phi_0(c, \ob{a}_1) \wedge \phi_1(c, \ob{b}_0), \wedge \neg
\phi_1(c, \ob{b}_1).\]
By applying compactness and Ramsey's theorem we assume that $\{\ob{a}_i: i<0\}$ and
$\{\ob{a}_i: i>1\}$ are both indiscernible over $\bigcup_{i \in \omega}\ob{b}_i \cup \{c\}$ as well as
that $\{\ob{b}_i: i<0\}$ and $\{\ob{b}_i: i>1\}$ are both indiscernible over
$\bigcup_{i \in \omega}\ob{a}_i \cup \{c\}$.  Let $\ob{d}_i=\ob{a}_{2i}^{\frown}\ob{a}_{2i+1}$
and $\ob{e}_i=\ob{b}_{2i}^{\frown}\ob{b}_{2i+1}$ for $i \in \ZZ$.  Note that these two sequences
are mutually indiscernible.  Let $\psi_0(x, \ob{y}_0\ob{y}_1)$
be $\phi_0(x, \ob{y}_0) \leftrightarrow \neg \phi_0(x, \ob{y}_1)$
and $\psi_1(x, \ob{y}_0\ob{y}_1)$ be $\phi_1(x,\ob{y}_0) \leftrightarrow \neg \phi_1(x, \ob{y}_1)$.
Then  $\mf{C}\models \psi_0(c, \ob{d}_0) \wedge \psi_1(c, \ob{e}_0)$ and if $i \not=0$ then
$\models \neg \psi_0(c, \ob{d}_i) \wedge \neg \psi_1(c, \ob{e}_i)$ by the indiscernibility
assumptions.  It follows
that $\psi_0$, $\psi_1$, $\{\ob{d}_i : i \in \omega\}$ and $\{\ob{e}_i : i \in \omega\}$ witness that
$T$ is not dp-minimal. \end{proof}

Using the identical proof as above we show:

\begin{fact}\label{gen}  The following are equivalent:
\begin{enumerate}

\item There are formulae $\phi_i(\ob{x},\ob{y})$ for $1 \leq i \leq N$ and
sequences $\ob{a}_j^i$ for $1 \leq i \leq N$ and $ j \in \omega$ so that
for every $\eta: \{1, \dots, N\} \to \omega$ the type:
\[\bigwedge_{1 \leq i \leq N}\phi_i(\ob{x}, \ob{a}_{\eta(i)}^i) \wedge
\bigwedge_{1 \leq i \leq N}\bigwedge_{j \not= \eta(i)}\neg \phi_i(\ob{x}, \ob{a}_j^i)\]
is consistent.
\item There are formulae $\psi_i(\ob{x},\ob{y})$ for $1 \leq i \leq N$ and mutually
indiscernible sequences $\{a_j^i: j \in \omega\}$ for $1 \leq i \leq N$ so that
the type:
\[ \bigwedge_{1 \leq i \leq N}\psi_i(\ob{x},\ob{a}^i_0) \wedge \bigwedge_{1 \leq i \leq N}\neg
\psi_i(\ob{x},\ob{a}^i_1)\] is consistent
\end{enumerate}
\end{fact}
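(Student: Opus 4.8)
The plan is to prove the two implications separately, with essentially all of the substance living in $(2) \Rightarrow (1)$, which is a direct generalization of the proof of Fact~\ref{config}; the reverse implication will be a short extraction argument.

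For $(2) \Rightarrow (1)$ I would imitate the earlier construction verbatim. Starting from the mutually indiscernible sequences $\{\ob{a}^i_j : j \in \omega\}$ and a realization $c$ of $\bigwedge_{1 \leq i \leq N} \psi_i(\ob{x}, \ob{a}^i_0) \wedge \bigwedge_{1 \leq i \leq N} \neg\psi_i(\ob{x}, \ob{a}^i_1)$, first use compactness to reindex each sequence by $\ZZ$, and then apply compactness and Ramsey, exactly as in Fact~\ref{config}, to arrange that for each $i$ both tails $\{\ob{a}^i_j : j < 0\}$ and $\{\ob{a}^i_j : j > 1\}$ are indiscernible over $c$ together with all the other sequences. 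Now pair consecutive elements, setting $\ob{d}^i_j = (\ob{a}^i_{2j})^{\frown}(\ob{a}^i_{2j+1})$, and define the flip formula $\phi_i(\ob{x}, \ob{y}_0\ob{y}_1)$ to be $\psi_i(\ob{x}, \ob{y}_0) \leftrightarrow \neg\psi_i(\ob{x}, \ob{y}_1)$. Pairing consecutive elements of mutually indiscernible sequences again yields mutually indiscernible sequences $\{\ob{d}^i_j : j \in \ZZ\}$.

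The key computation then runs as follows. Since $\psi_i(c, \ob{a}^i_0)$ holds and $\psi_i(c, \ob{a}^i_1)$ fails, $\phi_i(c, \ob{d}^i_0)$ holds for each $i$. On the other hand, if $j \neq 0$ then both indices $2j$ and $2j+1$ lie in a single tail (both $<0$ when $j<0$, both $>1$ when $j\geq 1$), so by tail-indiscernibility over $c$ the truth values of $\psi_i(c, \ob{a}^i_{2j})$ and $\psi_i(c, \ob{a}^i_{2j+1})$ agree, whence $\phi_i(c, \ob{d}^i_j)$ fails. Thus $c$ realizes the instance of the pattern in $(1)$ corresponding to the constant map $\eta \equiv 0$. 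Because the $\{\ob{d}^i_j\}$ remain mutually indiscernible, every $\eta : \{1,\dots,N\} \to \omega$ is then realized: reindexing each $i$-th sequence by an order-preserving map carrying index $\eta(i)$ to $0$ extends to an automorphism $\sigma$ of $\mf{C}$, and $\sigma^{-1}(c)$ realizes the $\eta$-instance. This establishes $(1)$.

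For $(1) \Rightarrow (2)$ I would extract mutually indiscernible sequences from the given $\{\ob{a}^i_j\}$ by the same compactness-and-Ramsey procedure used to prove Fact~\ref{indisc}; the property that for every $\eta$ the displayed type is consistent is preserved under this extraction, as any witness to inconsistency would already be recorded in the Ehrenfeucht--Mostowski type of the sequences. Specializing to $\eta \equiv 0$ produces an element $c$ with $\phi_i(c, \ob{a}^i_0)$ and $\neg\phi_i(c, \ob{a}^i_1)$ for all $i$, so taking $\psi_i = \phi_i$ gives $(2)$ at once. I expect the only real obstacle to be the bookkeeping in $(2)\Rightarrow(1)$: checking that for every $j \neq 0$ the paired indices $2j, 2j+1$ fall within one indiscernible tail, so the flip formula correctly negates, and then passing cleanly from the single $\eta \equiv 0$ instance to all $\eta$. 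These are precisely the points where the two-formula argument of Fact~\ref{config} must survive the passage to $N$ formulae, but neither introduces a genuinely new idea.
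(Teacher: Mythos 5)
Your proposal is correct and takes essentially the same route as the paper, which proves Fact~\ref{gen} by declaring it ``the identical proof'' to Fact~\ref{config}: reindex by $\ZZ$, use compactness and Ramsey to make the tails $\{\ob{a}^i_j : j<0\}$ and $\{\ob{a}^i_j : j>1\}$ indiscernible over the realization and the other sequences, pair consecutive elements, and pass to the flip formulae $\psi_i(\ob{x},\ob{y}_0)\leftrightarrow\neg\psi_i(\ob{x},\ob{y}_1)$, with the converse direction being the routine Ramsey/compactness extraction as in Fact~\ref{indisc}. Your bookkeeping (indices $2j,2j+1$ landing in a single tail for $j\neq 0$, and independent row-shifts being elementary by mutual indiscernibility) is exactly what makes the paper's two-formula argument survive the passage to $N$ formulae.
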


As in the case of the independence property and strong dependence we have a characterization of
dp-minimality in terms of splitting indiscernible sequences.

\begin{fact}\label{ind} The following are equivalent for a theory $T$:

\begin{enumerate}
\item $T$ is dp-minimal.
\item If $\{\ob{a}_i : i \in I\}$ is an indiscernible sequence and $c$ is an element then
there is a partition of $I$ into finitely many convex sets $I_0, \dots, I_n$, at most two of which are infinite,
so that for any $1 \leq l \leq n$ if $i,j \in I_l$ then $tp(\ob{a}_i/c)=tp(\ob{a}_j/c)$.
\item If $\{\ob{a}_i : i \in I\}$ is an indiscernible sequence and $c$ is an element then
there is a partition of $I$ into finitely many convex sets $I_0, \dots, I_n$, at most two of which are infinite,
so that for any $1 \leq l \leq n$ the sequence $\{\ob{a}_i : i \in I_l\}$ is indiscernible over $c$.
\end{enumerate}
\end{fact}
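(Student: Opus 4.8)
The plan is to prove the cycle $(1)\Rightarrow(3)\Rightarrow(2)\Rightarrow(1)$. The implication $(3)\Rightarrow(2)$ is immediate: if $\{\ob{a}_i : i \in I_l\}$ is indiscernible over $c$ then in particular all of its members realize the same type over $c$, so the constant-$1$-type condition of $(2)$ holds on each piece, and the bookkeeping on the number of infinite pieces is identical.

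For $(2)\Rightarrow(1)$ I argue the contrapositive. Assume $T$ is not dp-minimal; by Fact~\ref{indisc} fix a mutually indiscernible ICT pattern given by $\phi(x,\ob{y})$, $\psi(x,\ob{y})$ and $\{\ob{a}_i : i \in \QQ\}$, $\{\ob{b}_i : i \in \QQ\}$, stretched to a dense index set by compactness. A short two-step chase using mutual indiscernibility (move the $\ob{a}$-indices keeping all $\ob{b}$'s fixed, then the $\ob{b}$-indices keeping the $\ob{a}$'s fixed) shows that the \emph{diagonal} sequence $\ob{d}_i := \ob{a}_i^{\frown}\ob{b}_i$ is indiscernible over $\emptyset$. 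The key trick is to realize the pattern \emph{off the diagonal}: choose $m<n$ in $\QQ$ and an element $c$ with $\phi(c,\ob{a}_m)\wedge\psi(c,\ob{b}_n)$ together with $\neg\phi(c,\ob{a}_l)$ for $l\neq m$ and $\neg\psi(c,\ob{b}_k)$ for $k\neq n$. Reading $\phi$ off the $\ob{a}$-part and $\psi$ off the $\ob{b}$-part of $\ob{d}_i$, the element $c$ puts a ``spike'' at index $m$ and another at index $n$, while each of the three regions $\{i<m\}$, $\{m<i<n\}$, $\{i>n\}$ carries the same formula-values throughout. Since $\tp(\ob{d}_m/c)$ and $\tp(\ob{d}_n/c)$ differ from those of their neighbours, any partition of $\QQ$ into convex pieces of constant type over $c$ must break at both $m$ and $n$; the three (infinite) regions then lie in pairwise distinct parts, forcing at least three infinite pieces. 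Thus $(2)$ fails for $\{\ob{d}_i\}$ and $c$.

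Finally, for $(1)\Rightarrow(3)$ I argue that dp-minimality forces a good partition. First, no single formula can alternate infinitely often along $\{\ob{a}_i : i\in I\}$ as $x=c$ is plugged in, since that would be the independence property and hence yield an ICT pattern; applying this after passing to the indiscernible sequence of $p$-element consecutive blocks (so that a break visible only on $p$-tuples becomes visible on single blocks) shows that $I$ splits into \emph{finitely many} maximal convex pieces each indiscernible over $c$. It remains to bound the number of infinite pieces by two. Suppose there were three infinite pieces $J_a<J_b<J_c$. Using that $J_b$ is infinite I can pick two \emph{disjoint} infinite convex sub-sequences $S_1<S_2$, one straddling a genuine break below $J_b$ and one straddling a break above $J_b$; being disjoint convex parts of an $\emptyset$-indiscernible sequence they are mutually indiscernible, and after the same block reduction each carries a formula whose value under $c$ alternates exactly once. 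This is precisely the configuration of Fact~\ref{config}, so $T$ would not be dp-minimal, a contradiction. Hence at most two pieces are infinite, giving $(3)$.

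The main obstacle is the $(1)\Rightarrow(3)$ direction: extracting from the bare failure of joint $c$-indiscernibility a \emph{single} formula that witnesses just one alternation (the block-reduction step), and arranging the two straddling sub-sequences to be simultaneously disjoint and each genuinely split, so that Fact~\ref{config} can be invoked. By contrast $(2)\Rightarrow(1)$ is short once one thinks to realize the pattern off the diagonal, and $(3)\Rightarrow(2)$ is formal.
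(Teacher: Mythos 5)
Your implications $(3)\Rightarrow(2)$ and $(2)\Rightarrow(1)$ are correct and essentially identical to the paper's own argument: the paper also concatenates the two rows into the diagonal sequence $\overline{c}_i=\overline{a}_i^{\frown}\overline{b}_i$ (indexed by $3\times\omega$) and realizes the pattern off the diagonal, with spikes at $\omega$ and $2\times\omega$ whose singleton types force at least three infinite convex pieces. Your observation that disjoint convex pieces of a single indiscernible sequence, one entirely below the other, are mutually indiscernible is also correct and is used tacitly in the paper.

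The genuine gap is in $(1)\Rightarrow(3)$, at the first step. From dp-minimality (indeed already from NIP) you correctly rule out infinite alternation of any single formula $\phi(x;\overline{y}_1,\dots,\overline{y}_p)$ along the sequence of consecutive $p$-blocks; but this gives finitely many break points \emph{for each formula separately}, and you then conclude that $I$ ``splits into finitely many maximal convex pieces each indiscernible over $c$.'' That inference fails: the partition in $(3)$ must be good for all formulas simultaneously, and the union over infinitely many formulas of finitely many breaks each can be infinite. Concretely, let $T$ be the (stable, hence NIP) theory of infinitely many independent equivalence relations $E_n$, each with infinitely many classes; take an indiscernible sequence $(a_i)_{i\in\omega}$ of elements pairwise $E_n$-inequivalent for every $n$, and choose $c$ with $E_n(c,a_n)$ for each $n$. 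Every formula $E_n(x,c)$ changes truth value at most twice along the sequence, yet each $a_i$ has a distinct type over $c$ (index $i$ is the unique one satisfying $E_i(x,c)$), so the maximal convex $c$-indiscernible pieces are exactly the singletons and no finite partition exists. This $T$ is of course not dp-minimal, but your finiteness step invokes only the no-infinite-alternation property, which this $T$ satisfies; the finiteness is precisely the dividing line between NIP and strong dependence and must be extracted from dp-minimality by a different mechanism. The paper does this by citing Adler to get (for a saturated dense index order) an initial segment $I_0$ and a final segment $I_1$ indiscernible over $c$, taking them maximal, and then showing the middle region $I\setminus(I_0\cup I_1)$ is finite by converting maximality into the configuration of Fact~\ref{config}: a descending sequence of $n$-blocks from $I_0$ headed by a straddling tuple on which the witnessing formula flips once, and an ascending sequence of blocks from $I_1$ likewise, the two being mutually indiscernible. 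Your second step, bounding the number of infinite pieces by two via two straddling block sequences, is sound and is the same use of Fact~\ref{config} --- but it presupposes the finite decomposition, which is exactly the part your argument does not deliver; once that step is repaired along the paper's lines, your second step becomes redundant.
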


\begin{proof}$(1) \Rightarrow  (3)$:
Suppose $T$ is dp-minimal and for contradiction suppose that there is an indiscernible sequence (which for notational simplicity we
assume consists of singletons) $\{a_i : i \in I\}$ and an element $c$ witnessing the failure of $(3)$.  Without loss
of generality we assume that $I$ is a sufficiently saturated dense linear order without endpoints.  By~\cite[Corollary 6]{adler-nip}
there is an initial segment $I_0 \subseteq I$ and a final segment $I_1 \subseteq I$ so that the sequences $\{a_i: i \in I_0\}$ and
$\{a_i: i \in I_1 \}$ are indiscernible over $c$.  We may choose $I_0$ to be maximal in the sense that for no convex $J$ with
$I_0 \subset J \subseteq I$ is $\{a_j: j \in J \}$ indiscernible over $c$ and similarly for $I_1$.  If $I \setminus (I_0 \cup I_1)$
is finite then $(3)$ holds so $I \setminus (I_0 \cup I_1)$ is infinite and thus contains an interval.  Let $J_0$ and $J_1$ be disjoint
convex sets so that $I_0 \subset J_0 \subset I$ and $I_1 \subset J_1 \subset I$.
We find $j^*_1 < \dots < j^*_n \in J_0$ and a formula $\phi(x, y_1, \dots, y_n)$ so
that $\neg \phi(c, a_{i_1}, \dots a_{i_n})$ holds for all $i_1< \dots < i_n \in I_0$ but $\phi(c, a_{j^*_1}, \dots, a_{j^*_m})$ holds.
We choose a sequence of $n$-tuples $\ob{d}_i$ for $i \in \omega$ as follows: Let $\ob{d}_0=a_{j^*_1}, \dots, a_{j^*_n}$, if $i>0$
let $\ob{d}_i=a_{i_1}, \dots, a_{i_n}$ where $i_1, \dots, i_n$ is any increasing sequence of elements  with $i_k \in I_0$
for all $1 \leq k \leq n$ and so that
$i_n < \min\{l \in I : a_l \in \ob{d}_{i-1}\}$.  Note that $\{\ob{d}_i: i \in \omega\}$ is indiscernible.  Similarly we may find
$k^*_{j_1} < \dots < k^*_{j_m} \in J_1$ and a formula $\psi(x, y_1, \dots, y_m)$ so that
$\neg\psi(c, a_{i_1}, \dots, a_{i_m})$ holds for all $i_1 < \dots < i_m \in I_1$ but $\psi(c, a_{k^*_1}, \dots, a_{k^*_m})$ holds.
We build a sequence of $m$-tuples $\{\ob{e}_i : i \in \omega\}$ as follows: $\ob{e}_0 = a_{k^*_1}, \dots, a_{k^*_m}$.  For $i>0$ let
$\ob{e}_i={a}_{i_1}, \dots, a_{i_m}$ where $i_1, \dots, i_m$ is any increasing sequence from $I_1$ and $i_1>\max\{l \in I : a_l \in \ob{e}_{i-1}\}$.
Notice that $\{\ob{d}_i : i \in \omega\}$ and $\{\ob{e}_j: j \in \omega\}$ are mutually indiscernible sequences.
 Thus the formulae $\phi(x, \ob{y}_1, \dots, \ob{y}_n)$
 and $\psi(x, \ob{y}_1, \dots, \ob{y}_m)$ and the sequences $\ob{d}_i$, $\ob{e}_i$ for $i \in \omega$ form
a configuration as found in Fact~\ref{config} and $T$ is not dp-minimal.

\smallskip

$(3) \Rightarrow (2)$: Immediate.

\smallskip

$(2) \Rightarrow (1)$.  For contradiction suppose there are formulae $\phi(x, \ob{y})$ and $\psi(x, \ob{y})$ and
mutually indiscernible sequences $\{\ob{a}_i : i \in 3 \times \omega\}$ and  $\{\ob{b}_i : i \in 3  \times \omega\}$
which witness the failure of
dp-minimality.  Such sequences must exist by compactness.
For $i \in 3 \times \omega$ let $\ob{c}_i=\ob{a}_i^\frown \ob{b}_i$. Note this is
an indiscernible sequence.  Pick $d$ realizing the type:
\[\phi(x, \ob{a}_{\omega}) \wedge \psi(x, \ob{a}_{2 \times \omega}) \wedge \bigwedge_{i \not= \omega}
\neg \phi(x, \ob{a}_i) \wedge \bigwedge_{j \not= 2 \times \omega}\neg \psi(x, \ob{b}_j).\]
If $i \not= \omega$ or $2 \times \omega$ then $\models \neg \phi(d, \ob{a}_i) \wedge \neg \psi(d, \ob{b}_i)$,
if $i = \omega$ then $\models \phi(d, \ob{a}_{\omega}) \wedge \neg \psi(d,\ob{b}_{\omega})$ and
if $i = 2 \times \omega$ then $\models \neg \phi(d, \ob{a}_{2 \times \omega}) \wedge \psi(d, \ob{b}_{2 \times \omega})$.
It follows that the indiscernible sequence $\{\ob{c}_i : i \in 3 \times \omega\}$ can not be decomposed
 into convex subsets of which at most two are infinite so that the type over $d$ is constant on each convex
set.  Hence $(2)$ fails. \end{proof}

We use the preceding fact to provide a characterization of dp-minimality which allows us to
consider formulae with more than one free variable in a variant of Definition~\ref{basic} as well
as to consider sets of parameters of arbitrary size in a variant of Fact~\ref{ind}.  The proof is
immediate using Facts~\ref{gen} and~\ref{ind}.

\begin{fact}\label{many}  For any theory $T$ the following are equivalent.

\begin{enumerate}
\item $T$ is dp-minimal.
\item If $\{\ob{a}_i : i \in I\}$ is an indiscernible sequence and $\ob{c}$ is an $n$-tuple from the universe
then there is a partition of $I$ into convex sets $I_1, \dots, I_l$ of which at most $2^n$ are infinite so that
if $1 \leq l \leq n$ and $i, j \in I_l$ then $tp(\ob{a}_i/\ob{c})=tp(\ob{a}_j/\ob{c})$.
\item If $\{\ob{a}_i : i \in I\}$ is an indiscernible sequence and $\ob{c}$ is an $n$-tuple from the universe
then there is a partition of $I$ into convex sets $I_1, \dots, I_l$ of which at most $2^n$ are infinite so
that if $1 \leq l \leq n$ the sequence $\{\ob{a}_i : i \in I_l\}$ is indiscernible.
\item There is no sequence of  formulae $\phi_1(\ob{x}, \ob{y}), \dots, \phi_{2^n}(\ob{x}, \ob{y})$ with $\card{\ob{x}}=n$
and sequences $\{\ob{a}^j_i : i \in \omega\}$ with $1 \leq j \leq 2^n$  so that form any $\eta: \{1, \dots, 2^n\} \to \omega$
the type
\[ \bigwedge_{1 \leq k \leq 2^n} \phi_k(x, \ob{a}^k_{\eta(k)}) \wedge \bigwedge_{1 \leq k \leq 2^n}
\bigwedge_{\{t \in \omega: \eta(k) \not= t\}}\neg\phi_k(x, \ob{a}^k_t)\] is consistent.
\end{enumerate}
\end{fact}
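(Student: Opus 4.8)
I would prove the four conditions equivalent by establishing the cycle $(1)\Rightarrow(3)\Rightarrow(2)\Rightarrow(4)\Rightarrow(1)$, leaning on Fact~\ref{ind} and Fact~\ref{gen} exactly as suggested, together with the elementary observation that dp-minimality is unaffected by naming finitely many parameters: an ICT pattern over a parameter $c$ in the sense of Definition~\ref{ict} becomes, after appending $c$ to every parameter tuple, an ICT pattern over $\emptyset$, so $T$ is dp-minimal if and only if $T_{c}$ is.

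For $(1)\Rightarrow(3)$ I would induct on $n=\card{\ob{c}}$. The case $n=1$ is precisely Fact~\ref{ind}$(1)\Rightarrow(3)$, which yields at most $2=2^{1}$ infinite pieces. For the inductive step write $\ob{c}=\ob{c}'{}^\frown c_n$ with $\card{\ob{c}'}=n-1$. First apply Fact~\ref{ind} to the indiscernible sequence $\{\ob{a}_i : i \in I\}$ and the single element $c_n$, obtaining a convex partition with at most two infinite pieces on each of which the sequence is indiscernible over $c_n$. Passing to $T_{c_n}$, which is still dp-minimal by the remark above, each such infinite piece is an indiscernible sequence together with the $(n-1)$-tuple $\ob{c}'$, so the induction hypothesis refines it into at most $2^{n-1}$ infinite convex sub-pieces indiscernible over $c_n \ob{c}'=\ob{c}$. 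The product $2\cdot 2^{n-1}=2^{n}$ is the source of the bound. Then $(3)\Rightarrow(2)$ is immediate, since a sequence indiscernible over $\ob{c}$ has constant type over $\ob{c}$, and $(4)\Rightarrow(1)$ is immediate in contrapositive form, because an ICT pattern as in Definition~\ref{ict} is exactly an instance of the configuration forbidden in $(4)$ with $n=1$ (so $2^{n}=2$ and $\card{\ob{x}}=1$); hence if $T$ is not dp-minimal then $(4)$ fails.

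The real work is $(2)\Rightarrow(4)$, which I would prove contrapositively and which is the step I expect to be the main obstacle. Suppose the configuration of $(4)$ exists for some $n$; this is exactly statement $(1)$ of Fact~\ref{gen} with $N=2^{n}$, so that fact supplies formulae $\psi_1,\dots,\psi_{2^{n}}$ and mutually indiscernible sequences $\{\ob{a}^k_j : j \in \omega\}$ realizing the corner pattern. Exactly as in the proof of Fact~\ref{config}, compactness and Ramsey's theorem let me re-index the sequences by $(2^{n}+1)\times\omega$, keep them mutually indiscernible, and choose a realizing $n$-tuple $\ob{d}$ so that for each $k$ the formula $\psi_k(\ob{d},\ob{a}^k_j)$ holds exactly when $j$ is the first element of the $k$-th copy of $\omega$ and fails at every other $j$. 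Setting $\ob{c}_i=\ob{a}^1_i{}^\frown\cdots{}^\frown\ob{a}^{2^{n}}_i$ produces a single indiscernible sequence whose type over $\ob{d}$ is generic except at the $2^{n}$ distinguished positions, where it takes $2^{n}$ further distinct values.

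These $2^{n}$ singletons cut the index set into $2^{n}+1$ infinite convex blocks whose types over $\ob{d}$ cannot be amalgamated, so no convex partition with at most $2^{n}$ infinite pieces can keep the type over $\ob{d}$ constant, contradicting $(2)$. The delicate points, both of which mirror the two-sequence bookkeeping of Fact~\ref{config} now carried out with $2^{n}$ sequences, are the simultaneous extension of the corner pattern so that each sequence realizes its formula at one chosen position and negates it at all others (this is where mutual indiscernibility is essential) and the verification that the distinguished positions genuinely force $2^{n}+1$ infinite blocks.
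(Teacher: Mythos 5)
Your overall route is the one the paper intends: its proof of Fact~\ref{many} is declared ``immediate using Facts~\ref{gen} and~\ref{ind}'', and your fleshing-out is sound in almost every respect---the observation that naming finitely many constants preserves dp-minimality, the induction for $(1)\Rightarrow(3)$ giving the bound $2\cdot 2^{n-1}=2^n$, the reading of $(4)$ at $n=1$ for $(4)\Rightarrow(1)$, and the fact (which you correctly flag as needing mutual indiscernibility) that the diagonal concatenation $\ob{c}_i=\ob{a}^1_i{}^\frown\cdots{}^\frown\ob{a}^{2^n}_i$ of mutually indiscernible rows is itself indiscernible. However, there is one concrete error in the final counting step of $(2)\Rightarrow(4)$: you place the hit for row $k$ at the first element of the $k$-th copy of $\omega$, i.e.\ at positions $0,\omega,2\times\omega,\dots,(2^n-1)\times\omega$ inside $(2^n+1)\times\omega$. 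The hit at position $0$ is the minimum of the index set, so it has no infinite block to its left; your $2^n$ singletons then bound only $2^n$ infinite convex blocks, namely the $2^n-1$ gaps between consecutive hits plus the single tail above $(2^n-1)\times\omega$ (which absorbs the last two copies). The partition consisting of these $2^n$ infinite intervals together with the $2^n$ singletons is convex, has constant type over $\ob{d}$ on each piece, and has exactly $2^n$ infinite pieces---so it satisfies $(2)$ and yields no contradiction, contrary to your claim that the singletons cut the index set into $2^n+1$ infinite blocks.

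The repair is exactly the placement the paper uses in proving Fact~\ref{ind}$(2)\Rightarrow(1)$, where the hits go at $\omega$ and $2\times\omega$ inside $3\times\omega$, not at $0$: put your hits at the first elements of copies $2$ through $2^n+1$, i.e.\ at $\omega, 2\times\omega, \dots, 2^n\times\omega$ (mutual indiscernibility plus compactness lets you realize any choice of hit positions, so this costs nothing). Then every distinguished position has infinitely many indices on each side, each distinguished position must lie in a piece containing no other index (its type over $\ob{d}$ differs pairwise from the others and from the generic type), and the complement decomposes into $2^n+1$ infinite convex intervals, each of which must contain an infinite piece of any finite convex partition---so at least $2^n+1$ infinite pieces are forced and $(2)$ is genuinely contradicted. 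With this one-index shift your proof is complete and coincides with the paper's intended argument.
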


Given a dp-minimal theory $T$ it is reasonable to ask if the bound in Fact~\ref{many}(4) may be improved from  $2^n$
 to $n+1$.  This holds in the stable case and we sketch the proof.

\begin{fact}If $T$ is dp-minimal and stable then there is no sequence of formulae
\[\phi_1(\ob{x}, \ob{y}), \dots, \phi_{n+1}(\ob{x}, \ob{y})\]with $\card{\ob{x}}=n$
and sequences $\{\ob{a}^j_i : i \in \omega\}$ with $1 \leq j \leq n$  so that for any $\eta: \{1, \dots, n+1\} \to \omega$
the type
\[ \bigwedge_{1 \leq k \leq n+1} \phi_k(x, \ob{a}^k_{\eta(k)}) \wedge \bigwedge_{1 \leq k \leq n}
\bigwedge_{\{t \in \omega: \eta(k) \not= t\}}\neg\phi_k(x, \ob{a}^k_t)\] is consistent.
\end{fact}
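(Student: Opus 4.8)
The plan is to pass from the combinatorial configuration to the forking-theoretic notion of weight, exploiting the fact that in a stable theory these two measures of rank coincide. Recall from~\cite{alfdpmin} that in a stable theory the dp-rank of a type agrees with its weight, and that dp-minimality is exactly the assertion that every $1$-type (over every base) has dp-rank, hence weight, at most $1$. Thus the heart of the matter is to show that the displayed $(n+1)$-row pattern, for a variable $\ob{x}$ of length $n$, forces some $n$-tuple $\ob{c}$ to satisfy $w(\tp(\ob{c}/A)) \ge n+1$ for a suitable base $A$; this will contradict the weight-$\le 1$ bound on singletons together with the subadditivity of weight.

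First I would put the pattern into a usable form. By Fact~\ref{gen}, applied with $N = n+1$, its existence yields formulae $\psi_k(\ob{x},\ob{y})$ and \emph{mutually indiscernible} sequences $\{\ob{a}^k_i : i \in \omega\}$ for $1 \le k \le n+1$, together with a realization $\ob{c}$ of $\bigwedge_k \psi_k(\ob{x},\ob{a}^k_0) \wedge \bigwedge_k \neg\psi_k(\ob{x},\ob{a}^k_1)$; by compactness I may index the rows by any linear order I like. The crucial consequence is that for each $k$ the row $I_k := \{\ob{a}^k_i : i \in \omega\}$ fails to be indiscernible over $\ob{c}$, since $\psi_k(\ob{c},\ob{a}^k_0)$ holds while $\psi_k(\ob{c},\ob{a}^k_1)$ fails.

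Next I would translate this into a weight witness. Since $T$ is stable, each $I_k$ is an indiscernible set, and I would arrange a common base $A$ (collecting the tails of all the rows) over which each $I_k$ is a Morley sequence; mutual indiscernibility then gives that the chosen elements $\ob{a}^1_0, \dots, \ob{a}^{n+1}_0$ are independent over $A$. Recasting the failure of $\ob{c}$-indiscernibility of $I_k$ through the standard stable dictionary between indiscernibility over $A\ob{c}$ and the independence $\ob{c} \ind_A I_k$, I obtain $\ob{c} \nind_A \ob{a}^k_0$ for every $k$. Hence $\{\ob{a}^k_0 : 1 \le k \le n+1\}$ is an $A$-independent family each member of which forks with $\ob{c}$ over $A$, so $w(\tp(\ob{c}/A)) \ge n+1$. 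On the other hand $\ob{c}$ is an $n$-tuple, and since every single coordinate has weight at most $1$ over every base and weight is subadditive, $w(\tp(\ob{c}/A)) \le n$, the desired contradiction.

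The main obstacle is this third step: extracting, from nothing more than the mutual indiscernibility of the rows together with the alternation of truth values of the $\psi_k$, a single base $A$ over which all the $\ob{a}^k_0$ are \emph{simultaneously} independent and each forks with $\ob{c}$. This requires choosing $A$ so that all $n+1$ rows genuinely become Morley sequences and so that the indiscernibility-versus-nonforking equivalence applies uniformly across the rows; once that dictionary is in place, the weight computation and the appeal to subadditivity and to dp-minimality are routine. One may instead bypass the explicit construction entirely by invoking directly the identity of dp-rank with weight from~\cite{alfdpmin}, reading the $(n+1)$-row pattern as a witness to dp-rank $\ge n+1$ and concluding as above.
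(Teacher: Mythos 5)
Your proposal is correct and follows essentially the same route as the paper's own (sketched) proof: both reduce the pattern to weight via \cite[Theorem 3.5]{alfdpmin} (in a stable dp-minimal theory every 1-type has weight $1$), bound the weight of the $n$-tuple $\ob{c}$ by $n$ using subadditivity, and read the $(n+1)$-row pattern as a witness to weight at least $n+1$, exactly the translation the paper delegates to \cite[Lemmas 2.3 and 2.11]{alfdpmin}. Your fleshed-out stability details (Morley sequences over a common tail base, stationarity of average types) are sound, with only the cosmetic caveat that for a given row it may be $\ob{a}^k_1$ rather than $\ob{a}^k_0$ that forks with $\ob{c}$ over $A$, which does not affect the count.
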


\begin{proof}(Sketch)  By~\cite[Theorem 3.5]{alfdpmin}  $T$ is stable and dp-minimal if and only if every 1-type has weight $1$.
Thus if $T$ is stable and dp-minimal every $n$-type has weight at most $n$.  Apply~\cite[Lemma 2.3 and Lemma 2.11]{alfdpmin}
and the result follows.\end{proof}

This fact also holds for weakly o-minimal $T$ (we defer discussion of this to section~\ref{vc}).  In fact it is tempting to
restate the result with stable replaced by rosy and use \thorn-weight as in~\cite{alforthog} but at the
moment it is not clear if the result follows.  Overall we do not know whether these improved bounds hold for a general
 dp-minimal theory.

We finish with two facts which are useful in studying specific examples.  The first of these is particularly useful
when studying theories which admit some type of cell decomposition---viz. the p-adics.

\begin{fact}\label{union}  Suppose that $\phi(x, \ob{y})$ and $\psi(x, \ob{y})$ are formulae witnessing that
a theory $T$ is not dp-minimal.  Further suppose that $\phi(x, \ob{y})$ is
$\phi_1(x, \ob{y}) \vee \dots \vee \phi_n(x, \ob{y})$.  Then for some $1 \leq l \leq n$ $\phi_l(x, \ob{y})$ and
$\psi(x, \ob{y})$ witness that $T$ is not dp-minimal.
\end{fact}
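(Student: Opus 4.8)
The plan is to peel off the responsible disjunct by fixing a single witness to the ICT pattern and then transporting it along the two sequences. First I would invoke Fact~\ref{indisc} to replace the given ICT pattern witnessing that $\phi$, $\psi$ are not dp-minimal by a mutually indiscernible pattern for the \emph{same} formulae, and then stretch the index set to $\ZZ$ by a routine compactness and Ramsey argument. This yields mutually indiscernible sequences $\{\ob{a}_i : i \in \ZZ\}$ and $\{\ob{b}_j : j \in \ZZ\}$ such that for all $i,j \in \ZZ$ the formula $\phi(x, \ob{a}_i) \wedge \psi(x, \ob{b}_j) \wedge \bigwedge_{i' \neq i}\neg\phi(x, \ob{a}_{i'}) \wedge \bigwedge_{j' \neq j}\neg\psi(x, \ob{b}_{j'})$ is consistent.

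Next I would fix a realization $c$ of this conjunction at the pair $(0,0)$. Since $\phi(c, \ob{a}_0)$ holds and $\phi = \phi_1 \vee \dots \vee \phi_n$, some disjunct $\phi_l(c, \ob{a}_0)$ holds; fix such an $l$. Because $\neg\phi(c, \ob{a}_i)$ implies $\neg\phi_l(c, \ob{a}_i)$ for every $i \neq 0$, the element $c$ in fact realizes $\phi_l(x, \ob{a}_0) \wedge \psi(x, \ob{b}_0) \wedge \bigwedge_{i \neq 0}\neg\phi_l(x, \ob{a}_i) \wedge \bigwedge_{j \neq 0}\neg\psi(x, \ob{b}_j)$. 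The goal is then to show that the very same sequences $\{\ob{a}_i\}$ and $\{\ob{b}_j\}$ form an ICT pattern for $\phi_l$ and $\psi$, for which it remains to produce a witness at each pair $(i_0, j_0)$.

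The key step, and the only point requiring care, is the transport: I claim that for each $i_0, j_0 \in \ZZ$ there is an automorphism $\sigma$ of the monster model with $\sigma(\ob{a}_i) = \ob{a}_{i+i_0}$ and $\sigma(\ob{b}_j) = \ob{b}_{j+j_0}$ for all $i, j$. This is precisely where mutual indiscernibility enters, and I would obtain $\sigma$ in two steps. Since $\{\ob{b}_j\}$ is indiscernible over $A = \bigcup_{i}\ob{a}_i$, the index shift $\ob{b}_j \mapsto \ob{b}_{j+j_0}$ extends to an automorphism fixing $A$ pointwise; since $\{\ob{a}_i\}$ is indiscernible over $B = \bigcup_{j}\ob{b}_j$, the shift $\ob{a}_i \mapsto \ob{a}_{i+i_0}$ extends to an automorphism fixing $B$ pointwise. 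Composing these two automorphisms yields the desired $\sigma$. Passing from $\omega$ to $\ZZ$ is exactly what makes each shift an order-preserving bijection of the index set, hence elementary on the respective sequence.

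Finally, applying $\sigma$ to $c$ produces a realization of $\phi_l(x, \ob{a}_{i_0}) \wedge \psi(x, \ob{b}_{j_0}) \wedge \bigwedge_{i \neq i_0}\neg\phi_l(x, \ob{a}_i) \wedge \bigwedge_{j \neq j_0}\neg\psi(x, \ob{b}_j)$, so the conjunction for $\phi_l$ and $\psi$ is consistent at every pair. Restricting the index back to $\omega$, the sequences $\{\ob{a}_i\}$ and $\{\ob{b}_j\}$ witness that $\phi_l$ and $\psi$ are not dp-minimal, as required. I expect the simultaneous two-sided shift to be the main obstacle, both in justifying that it is elementary and in recognizing that one must first pass to a homogeneous index order.
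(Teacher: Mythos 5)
Your proof is correct, but it reaches the conclusion by a different mechanism than the paper's. The paper, after passing to the same mutually indiscernible $\ZZ$-indexed pattern, argues by pigeonhole: for each $i^* \in \ZZ$ it chooses a disjunct $l(i^*)$ making the pattern at $(i^*, 0)$ consistent, extracts an infinite $I \subseteq \ZZ$ on which $l(i) = l^*$ is constant, and takes $\{\ob{a}_i : i \in I\}$ together with $\{\ob{b}_j : j \in \ZZ\}$ as the new witnessing pattern, leaving implicit the step from $j^* = 0$ to arbitrary $j^*$ (which rests on indiscernibility of the $\ob{b}$-sequence over the $\ob{a}$'s; the paper's displayed formula even has a stray $j \neq j^*$ where $j \neq 0$ is meant). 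You instead fix a single realization $c$ at $(0,0)$, select the disjunct $\phi_l$ it satisfies---using correctly that $\neg\phi$ implies $\neg\phi_l$ for the negative instances---and transport $c$ to every pair $(i_0, j_0)$ by a two-sided shift automorphism; your construction of that automorphism (compose a shift of the $\ob{b}$'s fixing $\bigcup_i \ob{a}_i$ with a shift of the $\ob{a}$'s fixing $\bigcup_j \ob{b}_j$) is exactly right, as is your observation that $\ZZ$-indexing is what makes each shift an order bijection and hence elementary. Your route buys a slightly cleaner conclusion: one disjunct works at \emph{every} index, so you retain the original sequences with no passage to an infinite subsequence, and you make explicit the indiscernibility transport that the paper's ``it follows'' elides. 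What the paper's pigeonhole buys is that it manipulates consistency statements directly, with no appeal to homogeneity of the ambient model, though inside a monster model this difference is immaterial.
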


\begin{proof}  There are  mutually indiscernible sequences $\{\ob{a}_i : i \in \ZZ\}$ and $\ob{b}_i : i \in \ZZ\}$ so
that for any $i^*,j^* \in \ZZ$  the type
\[\phi(x, \ob{a}_{i^*}) \wedge \psi(x, \ob{b}_{j^*}) \wedge \bigwedge_{i \not= i^*}\neg\phi(x, \ob{a}_i) \wedge
\bigwedge_{j \not= j^*}\neg \psi(x, \ob{b}_j)\] is consistent.  For each $i^* \in \ZZ$ there is $l(i^*) \in \{1, \dots, n\}$
so that
\[\phi_{l(i^*)}(x, \ob{a}_{i^*})  \wedge \psi(x, \ob{b}_{0}) \wedge \bigwedge_{i \not=i^* }\neg\phi(x, \ob{a}_i) \wedge
\bigwedge_{j \not= j^*}\neg \psi(x, \ob{b}_j)\] is consistent.  Thus for some infinite $I \subseteq \ZZ$ and $1 \leq l^*
\leq n$ if $i \in I$ then $l(i)=l^*$.  It follows that the formulae $\phi_{l^*}(x, \ob{y})$ and
$\psi(x, \ob{y})$ together with the sequences $\{\ob{a}_i : i \in I\}$ and $\{\ob{b}_i : i \in \ZZ\}$ witness that
$T$ is not dp-minimal.\end{proof}

Our final fact shows that a counterexample to dp-minimality may always be found which uses only a single formula rather than
two as in Definition~\ref{basic}.

\begin{fact}\label{one}  Suppose that $T$ is not dp-minimal.  Then there is a formula $\theta(x, \ob{y})$ and sequences
$\{\ob{c}_i : i \in \omega\}$ and $\{\ob{d}_i : i \in \omega\}$ so that for any $i \not= j$ the type
\[\theta(x, \ob{c}_i) \wedge \theta(x, \ob{d}_j) \wedge \bigwedge_{k \not= i}\neg \theta(x, \ob{c}_k) \wedge
\bigwedge_{l \not= j}\neg \theta(x, \ob{d}_l)\] is consistent.
\end{fact}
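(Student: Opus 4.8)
The plan is to fuse the two formulae of an ICT pattern into a single formula by appending a two-element ``flag'' to the parameter tuples. Since $T$ is not dp-minimal, by Definition~\ref{basic} some model of $T$ carries an ICT pattern (Definition~\ref{ict}): formulae $\phi(x, \ob{y})$ and $\psi(x, \ob{y})$ together with sequences $\{\ob{a}_i : i \in \omega\}$ and $\{\ob{b}_i : i \in \omega\}$ such that every $(i,j)$-instance is consistent. After padding the shorter parameter tuple with dummy variables I may assume that $\phi$ and $\psi$ have a common parameter length $\card{\ob{y}}=m$, so that the two augmented tuples built below can be substituted into one and the same formula. I also note that the ambient model is infinite: for $i \neq i'$ the consistency of the $(i,i)$-instance yields an element satisfying $\phi(x, \ob{a}_i) \wedge \neg\phi(x, \ob{a}_{i'})$, whence the $\ob{a}_i$ are pairwise distinct. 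Fix two distinct elements $e_0 \neq e_1$.

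I would then set
\[\theta(x, \ob{y}\,y'\,y'') := \bigl( y' = y'' \wedge \phi(x, \ob{y}) \bigr) \vee \bigl( y' \neq y'' \wedge \psi(x, \ob{y}) \bigr),\]
and define $\ob{c}_i := \ob{a}_i^{\frown} e_0^{\frown} e_0$ and $\ob{d}_j := \ob{b}_j^{\frown} e_0^{\frown} e_1$. The two trailing coordinates act as a switch: since the last two entries of $\ob{c}_i$ agree, $\theta(x, \ob{c}_i)$ is equivalent to $\phi(x, \ob{a}_i)$, and since the last two entries of $\ob{d}_j$ differ, $\theta(x, \ob{d}_j)$ is equivalent to $\psi(x, \ob{b}_j)$. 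Consequently the negated instances translate as well, $\neg\theta(x, \ob{c}_k) \equiv \neg\phi(x, \ob{a}_k)$ and $\neg\theta(x, \ob{d}_l) \equiv \neg\psi(x, \ob{b}_l)$.

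With these equivalences the verification is purely mechanical. For any $i,j$ the type
\[\theta(x, \ob{c}_i) \wedge \theta(x, \ob{d}_j) \wedge \bigwedge_{k \neq i}\neg\theta(x, \ob{c}_k) \wedge \bigwedge_{l \neq j}\neg\theta(x, \ob{d}_l)\]
reduces, conjunct by conjunct, to
\[\phi(x, \ob{a}_i) \wedge \psi(x, \ob{b}_j) \wedge \bigwedge_{k \neq i}\neg\phi(x, \ob{a}_k) \wedge \bigwedge_{l \neq j}\neg\psi(x, \ob{b}_l),\]
which is exactly the $(i,j)$-instance of the original pattern and hence consistent. This produces the required configuration for every pair $i,j$, in particular for all $i \neq j$. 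There is no real obstacle here; the only points deserving a word of care are the preliminary padding, which lets $\ob{c}_i$ and $\ob{d}_j$ enter the same formula $\theta$, and the observation that the model is infinite, which supplies the distinct flag values $e_0 \neq e_1$.
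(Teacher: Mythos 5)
Your proof is correct, and it reaches the conclusion by a genuinely different coding trick than the paper's. The paper sets $\theta(x, \ob{y}_1\ob{y}_2) := \phi(x,\ob{y}_1) \vee \psi(x,\ob{y}_2)$, first stretching the ICT pattern (by compactness) to sequences indexed by $\omega+\omega$, and takes $\ob{c}_i = \ob{a}_i\ob{b}_i$ from the first block and $\ob{d}_j = \ob{a}_{\omega+j}\ob{b}_{\omega+j}$ from the second; there both disjuncts are ``live'' at every parameter tuple, and the verification leans on the ICT pattern itself to kill the unwanted disjunct (the witness for the $(i,j)$-instance satisfies $\phi$ only at $\ob{a}_i$ and $\psi$ only at $\ob{b}_{\omega+j}$, so the disjunction fails at every other concatenated tuple). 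Your equality-flag selector instead makes $\theta(x,\ob{c}_i)$ literally equivalent to $\phi(x,\ob{a}_i)$ and $\theta(x,\ob{d}_j)$ to $\psi(x,\ob{b}_j)$, so the required type reduces conjunct-by-conjunct to the $(i,j)$-instance of the original pattern; this buys a purely mechanical verification, no re-indexing by $\omega+\omega$, and consistency for all pairs $(i,j)$ rather than only $i \neq j$ (though the paper's construction in fact also yields this). The small costs are the two auxiliary points you correctly discharge: padding $\phi$ and $\psi$ to a common parameter length, and producing distinct flag elements $e_0 \neq e_1$, the latter justified by your observation that the consistency of the pattern forces the tuples $\ob{a}_i$ to be pairwise distinct, hence the model infinite. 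Both arguments are complete; yours trades the paper's compactness/stretching step for the harmless addition of two parameter slots.
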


\begin{proof}  Suppose that the formulae $\phi(x, \ob{y})$ and $\psi(x, \ob{y})$ and the
sequences $\{\ob{a}_i : i \in \omega + \omega\}$ and
$\{\ob{b}_j : j \in \omega + \omega\}$ witness that $T$ is not dp-minimal.
Let $\theta(x, \ob{y}_1,\ob{y}_2)$ be $\phi(x, \ob{y}_1) \vee \psi(x, \ob{y}_2)$.  For $i \in \omega$ let
 $\ob{c}_i$ be $\ob{a}_i\ob{b}_i$
 and for $j \in \omega$ let $\ob{d}_j$ be $\ob{a}_{\omega+j}\ob{b}_{\omega+j}$.  We claim this is as the fact requires.
Fix $i,j \in \omega$.  There is  $\alpha$ realizing the type
\[\phi(x, \ob{a}_j) \wedge \psi(x, \ob{b}_{\omega + j}) \wedge \bigwedge_{k \not= i}\neg \phi(x, \ob{a}_k) \wedge
\bigwedge_{l \not= \omega + j}\neg\psi(x, \ob{b}_l).\]
Thus  $\alpha$ realizes $\theta(x, \ob{c}_i)$ and $\theta(x, \ob{d}_j)$.  If $k \not= i$ then $\alpha$ realizes
$\neg \phi(x, \ob{a}_i)$ and $\neg \psi(x, \ob{b}_i)$ and hence realizes $\neg \theta(x, \ob{c}_i)$.  Finally if $l \not= j$
then $\alpha$ realizes $\neg \theta(x, \ob{a}_{\omega + l})$ and $\neg \theta(x, \ob{b}_{\omega+l})$ and hence $\neg \theta (x, \ob{d}_j)$
as desired.\end{proof}

\section{Relationship with similar notions}\label{vc}

In this brief section we examine the relationship between dp-minimality and various other strong forms of
dependence.

We begin with the notion of VC-density as studied in~\cite{vcdense}.
 For the ensuing definition we fix $\Delta(\ob{x},\ob{y})$ a finite
set of formulae where we consider $\ob{y}$ as the parameter variables.  If $A$ is
a set of $\card{\ob{y}}$-tuples we write $S^{\Delta}(A)$ for the
set of complete $\Delta$-types with parameters from $A$.

\begin{defi}{\em A theory $T$ has {\em VC-density one} if for any finite set of formulae $\Delta(\ob{x},\ob{y})$
there is a constant $C$ so that for any finite set, $A$, of $\card{\ob{y}}$-tuples $\card{S^{\Delta}(A)} \leq C\card{A}^{\card{\ob{x}}}$.}
\end{defi}

For example in \cite{vcdense} it is shown that
 any weakly o-minimal theory and any quasi o-minimal theory with definable bounds (for which see~\cite{qom}) has VC-density one.
  We have a
strong relationship between VC-density one and dp-minimality:

\begin{prop} If $T$ has VC-density one then $T$ is dp-minimal.
\end{prop}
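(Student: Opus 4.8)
The plan is to argue by contraposition: assuming $T$ is not dp-minimal, I will produce a finite set of formulae whose $\Delta$-type counting function grows quadratically in the size of the parameter set, contradicting VC-density one. The whole point is that an ICT pattern uses a \emph{single} object variable $x$, so the relevant exponent in the VC-density bound is $\card{\ob{x}} = 1$ and the bound is linear.

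So suppose $T$ is not dp-minimal, witnessed by an ICT pattern as in Definition~\ref{basic}: formulae $\phi(x, \ob{y})$ and $\psi(x, \ob{y})$ with $x$ a single variable, together with sequences $\{\ob{a}_i : i \in \omega\}$ and $\{\ob{b}_i : i \in \omega\}$ such that for all $i, j$ the type
\[\phi(x, \ob{a}_i) \wedge \psi(x, \ob{b}_j) \wedge \bigwedge_{l \neq i} \neg\phi(x, \ob{a}_l) \wedge \bigwedge_{k \neq j} \neg\psi(x, \ob{b}_k)\]
is consistent; I work in the monster model and let $c_{ij}$ realize it. I then set $\Delta = \{\phi(x, \ob{y}), \psi(x, \ob{y})\}$, so that $\card{\ob{x}} = 1$. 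First I would fix $n$ and let $A = \{\ob{a}_0, \dots, \ob{a}_{n-1}, \ob{b}_0, \dots, \ob{b}_{n-1}\}$, a set of at most $2n$ tuples. For each pair $(i, j)$ with $0 \leq i, j < n$, the defining property of the pattern says that $\phi(c_{ij}, \ob{a}_l)$ holds exactly when $l = i$ and $\psi(c_{ij}, \ob{b}_k)$ holds exactly when $k = j$. These instances all lie in $S^{\Delta}(A)$, so distinct pairs give distinct $\Delta$-types: if $(i,j) \neq (i', j')$, then $c_{ij}$ and $c_{i'j'}$ disagree on $\phi(x, \ob{a}_i)$ or on $\psi(x, \ob{b}_j)$. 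Hence $\card{S^{\Delta}(A)} \geq n^2$.

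Finally, VC-density one applied to $\Delta$, with $\card{\ob{x}} = 1$, furnishes a constant $C$ with $\card{S^{\Delta}(A)} \leq C\card{A} \leq 2Cn$ for every such $A$. Comparing the two bounds gives $n^2 \leq 2Cn$ for all $n$, which fails once $n > 2C$, the desired contradiction. This is essentially a counting estimate, so I do not anticipate a genuine obstacle; the one point to watch is the interplay between the single object variable of the ICT pattern and the exponent $\card{\ob{x}} = 1$ in the definition of VC-density one, since it is precisely the linearity of the bound (against quadratic growth of the type count) that drives the contradiction. Verifying that the $n^2$ realizers induce pairwise distinct $\Delta$-types is immediate from the pattern, so the estimate goes through cleanly.
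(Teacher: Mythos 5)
Your proof is correct and follows essentially the same counting argument as the paper's: an ICT pattern with a single object variable yields at least quadratically many $\Delta$-types over the set of the first $N$ parameter tuples from both rows, contradicting the linear bound $\card{S^{\Delta}(A)} \leq C\card{A}$ that VC-density one gives when $\card{\ob{x}}=1$. The only immaterial difference is that the paper first invokes Fact~\ref{one} to replace $\phi$ and $\psi$ by a single formula $\theta$ (so $\Delta$ is a singleton), whereas you take $\Delta = \{\phi, \psi\}$ directly, which the definition of VC-density one permits since $\Delta$ may be any finite set.
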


\begin{proof}  Suppose that $T$ is not dp-minimal.  Apply Fact~\ref{one} to find a formula $\phi(x, \ob{y})$
and sequences $\ob{a}_i$ and $\ob{b}_j$ as described there.  For $N \in \NN$ let
\[A_N=\{\ob{a}_i : i \leq N\} \cup \{\ob{b}_j : j \leq N\}.\]
By the failure of dp-minimality we immediately see that $S^{\{\phi\}}(A_N) \geq \frac{1}{4}\card{A_N}^2$ for all
$N$.
Thus $T$ does not have VC-density one. \end{proof}

Note that the above proof only requires that we have that $\card{S^{\Delta}(A)} \leq C\card{A}$ for $\Delta$ consisting
of formulae of the form $\phi(x, \ob{y})$, i.e. with only one free variable.  We may use this fact to provide a novel proof for
 the dp-minimality
of the theory of algebraically closed valued fields.  This fact was already observed in \cite{adlervc} and as the proof is
somewhat technical we do not include it here.

\begin{cor} Any weakly o-minimal theory as well as any quasi o-minimal theory with definable bounds is
dp-minimal.
\end{cor}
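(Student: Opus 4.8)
The plan is to deduce this corollary directly from the preceding Proposition together with the VC-density computations quoted from \cite{vcdense}. Recall that the Proposition asserts that any theory of VC-density one is dp-minimal, so it suffices to check that each of the two classes of theories under consideration has VC-density one; the desired conclusion then follows by a single application of the Proposition in each case.

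First I would invoke the result of \cite{vcdense} that every weakly o-minimal theory has VC-density one. Feeding this into the Proposition immediately yields that every weakly o-minimal theory is dp-minimal. Second, for a quasi o-minimal theory with definable bounds in the sense of \cite{qom}, I would again cite \cite{vcdense} for the statement that such a theory has VC-density one, and conclude dp-minimality by the identical application of the Proposition. Thus both halves of the corollary reduce to the same two-step argument: quote the VC-density bound, then apply the implication $\text{VC-density one} \Rightarrow \text{dp-minimal}$.

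Since both ingredients are already in hand---the VC-density estimates from \cite{vcdense} and the implication established in the Proposition---there is no genuine obstacle; the argument is a routine combination. The only point meriting a remark is that the hypotheses are in fact more than we need: as observed in the remark following the Proposition, its proof uses only the weaker bound $\card{S^{\{\phi\}}(A)} \leq C\card{A}$ for formulae $\phi(x,\ob{y})$ in a single free variable, and this is a special case of VC-density one. Hence the cited results apply with room to spare, and the corollary follows.
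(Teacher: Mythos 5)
Your proposal is correct and matches the paper's intended argument exactly: the paper states the corollary immediately after citing \cite{vcdense} for the fact that weakly o-minimal and quasi o-minimal theories with definable bounds have VC-density one, so the corollary follows by the preceding Proposition just as you argue. Your closing observation that only the single-free-variable bound $\card{S^{\{\phi\}}(A)} \leq C\card{A}$ is needed also mirrors the remark the authors make after the Proposition.
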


We give a short direct proof of the dp-minimality of any weakly o-minimal theory in the next section.
Also under the assumption of VC-density one we obtain the improved bounds in Fact~\ref{many}(4) as discussed
in section~\ref{abcs}.  With the same proof as in the previous proposition we show:

\begin{prop}  If $T$ has VC-density one there is no sequence of formulae
\[\phi_1(\ob{x}, \ob{y}), \dots, \phi_{n+1}(\ob{x}, \ob{y})\]with $\card{\ob{x}}=n$
and sequences $\{\ob{a}^j_i : i \in \omega\}$ with $1 \leq j \leq n$  so that form any $\eta: \{1, \dots, n+1\} \to \omega$
the type
\[ \bigwedge_{1 \leq k \leq n+1} \phi_k(x, \ob{a}^k_{\eta(k)}) \wedge \bigwedge_{1 \leq k \leq n}
\bigwedge_{\{t \in \omega: \eta(k) \not= t\}}\neg\phi_k(x, \ob{a}^k_t)\] is consistent.
\end{prop}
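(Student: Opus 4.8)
The plan is to run the counting argument of the previous proposition with the exponent raised from $2$ to $n+1$, playing it off against the VC-density bound, which for formulae in $n$ object variables only permits growth of order $\card{A}^n$. (I read the indices in the statement as in Fact~\ref{many}(4): there are $n+1$ sequences $\{\ob{a}^j_i : i \in \omega\}$, $1 \leq j \leq n+1$, and the negated conjunction runs over $1 \leq k \leq n+1$, so that the displayed type is well-formed.) Assume for contradiction that such formulae and sequences exist, and set $\Delta = \{\phi_1(\ob{x}, \ob{y}), \dots, \phi_{n+1}(\ob{x}, \ob{y})\}$. Since $\card{\ob{x}} = n$, VC-density one supplies a constant $C$ with $\card{S^\Delta(A)} \leq C \card{A}^n$ for every finite set $A$ of $\card{\ob{y}}$-tuples.

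First I would, for each $N \in \NN$, truncate every sequence to its first $N+1$ terms and put
\[A_N = \bigcup_{1 \leq j \leq n+1}\{\ob{a}^j_i : i \leq N\},\]
so that $\card{A_N} = (n+1)(N+1)$. For each $\eta \colon \{1, \dots, n+1\} \to \{0, \dots, N\}$ the hypothesised type is consistent, so I fix a realisation $\ob{c}_\eta$. The heart of the argument is to verify that $\eta \mapsto \tp_\Delta(\ob{c}_\eta / A_N)$ is injective: if $\eta(k) \neq \eta'(k)$ for some $k$, then $\ob{c}_\eta \models \phi_k(\ob{x}, \ob{a}^k_{\eta(k)})$ while $\ob{c}_{\eta'} \models \neg \phi_k(\ob{x}, \ob{a}^k_{\eta(k)})$, because $\eta(k)$ is one of the indices $t \neq \eta'(k)$ appearing in the negated conjunction for $\ob{c}_{\eta'}$. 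Hence $\card{S^\Delta(A_N)} \geq (N+1)^{n+1}$.

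Finally I would compare orders of growth. The VC-density bound gives $\card{S^\Delta(A_N)} \leq C(n+1)^n(N+1)^n$, a polynomial of degree $n$ in $N$, while the lower bound $(N+1)^{n+1}$ has degree $n+1$; for all sufficiently large $N$ these are incompatible, contradicting VC-density one. I anticipate no genuine obstacle here: the one point demanding care is the injectivity of $\eta \mapsto \tp_\Delta(\ob{c}_\eta / A_N)$, which rests only on the negated conjuncts of each hypothesised type, exactly as in the $n=1$ case treated in the previous proposition.
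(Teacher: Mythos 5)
Your proof is correct and is exactly the argument the paper intends: the paper simply says ``with the same proof as in the previous proposition,'' i.e.\ the counting argument with $\Delta$ now the set of all $n+1$ formulae, yielding $(N+1)^{n+1}$ distinct $\Delta$-types over $A_N$ against the VC-density bound $C\card{A_N}^{n}$, which is precisely what you carried out. Your reading of the statement's index typos (the sequences and the negated conjunction should run over $1 \leq j,k \leq n+1$, matching Fact~\ref{many}(4)) is the right one, and your injectivity check is the only point needing care, handled correctly.
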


We consider the notion of VC-minimality as introduced in~\cite{adlervc}.

\begin{defi}{\em  Fix a theory $T$ and a monster model $\mf{C} \models T$.
 $T$ is {\em VC-minimal} if there is a family of formulae $\Phi$ of the the
form $\phi(x, \ob{y})$ so that:
\begin{itemize}
\item  If $\phi(x,\ob{y}), \psi(x, \ob{y}) \in \Phi$ and $\ob{a}, \ob{b} \in C$ then
one of:
\begin{itemize}
\item $\phi(C,\ob{a}) \subseteq \psi(C, \ob{b})$,
\item $ \psi(C, \ob{b}) \subseteq \phi(C, \ob{a})$,
\item $\neg\phi(C, \ob{a})
\subseteq \psi(C, \ob{b})$,
\item  $\psi(C, \ob{b}) \subseteq \neg\phi(C, \ob{a}).$
\end{itemize}
\item If $X \subseteq C$ then there are a finite collection of $\phi_i(x, \ob{y})$ from $\Phi$ and
tuples $\ob{a}_i \in C$ so that $X$ is a Boolean combination of the sets $\phi_i(C, \ob{a}_i)$.
\end{itemize}}
\end{defi}

Once again we have a strong relation with dp-minimality.

\begin{prop}{\cite[Proposition 9]{adlervc}}  Any VC-minimal theory is dp-minimal.
\end{prop}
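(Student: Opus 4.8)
The plan is to prove the contrapositive: assuming $T$ is not dp-minimal, I will show that $T$ cannot be VC-minimal. By Fact~\ref{indisc} the failure of dp-minimality is witnessed by a \emph{mutually indiscernible} ICT pattern, i.e. formulae $\phi(x,\ob y)$ and $\psi(x,\ob y)$ (with $x$ a single variable ranging over the home sort $C$) together with mutually indiscernible sequences $\{\ob a_i : i \in \ZZ\}$ and $\{\ob b_j : j \in \ZZ\}$ such that for every $i,j$ there is an element lying in $\phi(C,\ob a_i)$ and in $\psi(C,\ob b_j)$ but in none of the other instances. Since $x$ is a single variable, each set $\phi(C,\ob a_i)$ and $\psi(C,\ob b_j)$ is a definable subset of $C$, and hence, by VC-minimality, a Boolean combination of instances of formulae from the directed family $\Phi$; I will call such instances \emph{balls}. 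My aim is to reduce the pattern to one in which $\phi$ and $\psi$ are themselves members of $\Phi$, and then exploit the directedness axiom.

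For the reduction, I would first use compactness to obtain, for the fixed formula $\phi$, a uniform Boolean decomposition: finitely many members of $\Phi$ and a single Boolean template expressing $\phi(C,\ob a_i)$ as a combination of balls whose parameters vary in a fixed definable family over $\ob a_i$. Writing this template in disjunctive normal form and applying Fact~\ref{union} lets me replace $\phi$ by a single conjunction of ball-literals, and the directedness axiom — any two balls are nested, disjoint, or cover $C$ — should then let me simplify such a conjunction (a ``swiss cheese'') until $\phi$, and symmetrically $\psi$, is a single instance of a $\Phi$-formula. After passing to subsequences and re-extracting mutual indiscernibility via Fact~\ref{indisc}, I may assume $\phi,\psi \in \Phi$ and set $B_i := \phi(C,\ob a_i)$ and $D_j := \psi(C,\ob b_j)$.

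The directedness then finishes the argument. By indiscernibility of $\{\ob a_i\}$ the relation between $B_i$ and $B_{i'}$ for $i<i'$ is, uniformly in the pair, one of the four alternatives of the directedness axiom. If that relation is nesting, then $B_i \subseteq B_{i'}$ (or the reverse) for every such pair, so some instance is contained in another, contradicting the ICT requirement of an element lying in a given $B_i$ and in no other; if it is covering, then $B_{i'}\cup B_{i''}=C$ for two indices distinct from a third $i$, so no element can lie in $B_i$ alone. Hence $\{B_i\}$ is pairwise disjoint, and likewise $\{D_j\}$. Finally, mutual indiscernibility forces the relation between $B_i$ and $D_j$ to be \emph{constant} in both $i$ and $j$; since each $B_i \cap D_j$ contains the element realizing the $(i,j)$ type, this common relation is not disjointness, hence it is $B_i \subseteq D_j$, $D_j \subseteq B_i$, or covering. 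In the first case $B_i \subseteq \bigcap_j D_j = \emptyset$ (the $D_j$ being disjoint and infinite in number), in the second $D_j \subseteq \bigcap_i B_i = \emptyset$, and in the third $C \setminus B_i \subseteq D_j \cap D_{j'} = \emptyset$ for $j \neq j'$, forcing $B_i = C$; each contradicts the nonemptiness of the balls. Thus no ICT pattern exists and $T$ is dp-minimal.

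I expect the real difficulty to lie in the reduction of the second paragraph rather than in the combinatorial core. Fact~\ref{union} disposes of disjunctions, but a definable subset of $C$ is an arbitrary Boolean combination of balls, so one must also control conjunctions and complements: the honest route is to show, using only the directedness axiom, that every such Boolean combination is a finite union of swiss cheeses, and that an indiscernible sequence of swiss cheeses satisfying the ICT column condition already reduces to a disjoint family of single balls. Securing the uniformity of the Boolean decomposition and handling complements correctly is the main obstacle.
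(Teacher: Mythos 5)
The paper offers no proof of this proposition at all --- it is quoted from Adler \cite{adlervc} as a black box --- so there is no in-paper argument to compare yours against, and your attempt has to be judged on its own terms. Your combinatorial endgame is correct: mutual indiscernibility does force the directedness alternative to be constant across all pairs $(i,j)$ (since $\tp(\ob{a}_i\ob{b}_j)$ is constant), the within-row analysis correctly eliminates nesting and co-covering to leave pairwise disjoint rows, and the cross-row case analysis closes as you say. The only cosmetic slip there is in the co-covering cross case: $B_i = C$ does not contradict ``nonemptiness of the balls'' but rather the ICT requirement that the witness for any column $i' \neq i$ avoid $B_i$.

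The genuine gap is the reduction in your second paragraph, and it is more than an unverified step: the specific mechanism you propose --- that directedness lets you ``simplify such a conjunction (a swiss cheese) until $\phi$ \dots is a single instance of a $\Phi$-formula'' --- is false as stated. A swiss cheese is in general not equal to any single instance of a formula in $\Phi$: in ACVF, the standard VC-minimal example with $\Phi$ the ball formulas, a ball minus a proper sub-ball is definable but is not a ball, so no choice of $\Phi$-instance defines the same set, and your strategy of \emph{replacing} $\phi$ by an equivalent $\Phi$-formula cannot succeed. What the argument actually requires is to manufacture a \emph{new} ICT pattern out of the components of the old one: Fact~\ref{union} disposes of the disjuncts of the DNF, but for the surviving conjunction of literals one must argue (say, by pigeonholing, for each cell witness and each $i' \neq i$, which literal of $A_{i'}$ fails, then re-extracting mutual indiscernibility via Fact~\ref{indisc} while preserving consistency of the cell types) that some single literal already yields an ICT pattern. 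Two further consequences follow that your write-up does not address: the surviving literal may be a \emph{negated} ball, so the endgame must also handle rows of co-balls and the mixed ball/co-ball cross cases (these do close under the same four-alternative bookkeeping, but it is an additional case analysis, and your final paragraph explicitly assumes $\phi, \psi \in \Phi$); and the compactness-uniformization only gives \emph{existence} of ball parameters for each $\ob{a}_i$, not parameters definable from $\ob{a}_i$, so the chosen parameter sequences need not be mutually indiscernible and must be re-extracted before the endgame applies. You candidly flag the reduction as the main obstacle yourself; as it stands the pivotal step is missing, and the one concrete device offered for it does not work.
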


This implication may not be reversed as the following proposition shows.

\begin{prop} Let $\mc{L}$ be the language consisting of unary predicates $P_i$ with $i \in \omega_1$.
For any finite $I \subseteq \omega_1$ and $J \subseteq \omega_1$ with $I \cap J = \emptyset$ let
$\sigma_{I,J,n}$ be the sentence $\exists^{\geq n}(\bigwedge_{i \in I}P_ix \wedge \bigwedge_{j \in J}\neg P_jx)$.
Let $T = \{\sigma_{I,J,n}: \text{ for all } I, J, n\}$.   $T$ is complete, has quantifier elimination, is dp-minimal
(in fact has VC-density one),
but is not VC-minimal.
\end{prop}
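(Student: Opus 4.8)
The plan is to base everything on a single observation: the axioms $\sigma_{I,J,n}$ force every consistent \emph{atom} --- a set defined by finitely many conditions $P_i x$ together with $\neg P_j x$ ($I\cap J=\emptyset$) --- to be infinite in every model. First I would note that $T$ is consistent (for instance $2^{\omega_1}$ with $P_i=\{f:f(i)=1\}$ is a model, since each finite cylinder is infinite) and prove quantifier elimination by the primitive-formula test: an existential $\exists x\,\theta(x,\ob{a})$ with $\theta$ quantifier-free reduces, in disjunctive normal form, to a disjunction of clauses each asserting that $x$ lies in a fixed consistent atom while differing from finitely many named parameters; such $x$ exist in \emph{any} model because the atom is infinite, so the truth value of the existential is already determined by the quantifier-free data. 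For completeness I would then use that each $\mc{L}$-sentence mentions only finitely many $P_i$, reducing the question to the finite-sublanguage reducts; each such reduct is the theory of finitely many independent unary predicates all of whose $2^k$ atoms are infinite, which is $\aleph_0$-categorical (a back-and-forth preserving the finitely many predicates works precisely because all atoms are infinite) and hence complete. These cohere to give $M\equiv N$ for any two models of $T$.

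For dp-minimality I would verify VC-density one directly and then quote the earlier Proposition that VC-density one implies dp-minimal. Fixing a finite $\Delta(\ob{x},\ob{y})$ with $\card{\ob{x}}=n$, quantifier elimination shows that a complete $\Delta$-type of $\ob{x}$ over a finite set $A$ is pinned down by finitely much data: the profile of each $x_s$ on the finitely many predicates occurring in $\Delta$, together with the pattern of equalities among the $x_s$ and between the $x_s$ and the $O(\card{A})$ elements named in $A$. Counting these possibilities gives $\card{S^{\Delta}(A)}\leq K\card{A}^{n}$ for a constant $K$ depending only on $\Delta$, which is VC-density one.

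The hard part, and where I expect the main obstacle, is showing $T$ is \emph{not} VC-minimal. Suppose a family $\Phi$ witnesses VC-minimality and let $\mc{B}$ be the family of all instances $\phi(C,\ob{a})$. The directedness clause says no two members of $\mc{B}$ cross: for $B,B'\in\mc{B}$ one of $B\subseteq B'$, $B'\subseteq B$, $B\cap B'=\emptyset$, $B\cup B'=C$ must hold, and an independent pair (all four Boolean atoms nonempty) violates every clause. By quantifier elimination each instance is, modulo a finite set bounded by the parameter length, a union of atoms over the finite set $S_\phi$ of predicates occurring in $\phi$; say such an instance \emph{depends on} $i\in S_\phi$ if flipping the $i$-th coordinate changes membership for some profile. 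Since every $P_i$ is a finite Boolean combination of instances, and a Boolean combination of instances none of which depends on $i$ is (mod finite) $P_i$-invariant whereas $P_i$ splits a full atom into two infinite halves, some instance in the combination for $P_i$ must depend on $i$. Hence the dependence-predicates of $\Phi$ exhaust $\omega_1$, forcing $\card{\Phi}=\aleph_1$.

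The crux is to extract a crossing pair from this. I would apply the $\Delta$-system (sunflower) lemma to the uncountably many finite dependence-sets to obtain an uncountable subfamily with common root $R$ and pairwise disjoint nonempty petals, and then, using that $2^{R}$ is finite, thin further so that the chosen dependence witnesses all share one root value $\rho\in 2^{R}$. For two members $\alpha,\beta$ of the resulting family, pick private predicates $i\in S_\alpha\setminus R$ and $j\in S_\beta\setminus R$ on which instances $B,B'$ depend, witnessed at root value $\rho$. Because $R$, the non-$i$ part of $\alpha$'s petal, and the non-$j$ part of $\beta$'s petal are pairwise disjoint, I can fix a base profile on their union and then toggle $P_i$ and $P_j$ independently; since all joint atoms are infinite, each of the four toggle-combinations is realized by infinitely many points, so $B$ and $B'$ cross, contradicting directedness. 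The delicate points to get right are the reduction to dependence-minimal instances, the pigeonhole on the finite root $R$, and checking that the bounded finite corrections to each instance cannot destroy the four infinite atoms that witness the crossing.
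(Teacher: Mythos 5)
Your proposal is correct, and it in fact supplies much more than the paper does: the paper's own ``proof'' of this proposition is an acknowledged sketch (``we leave the details to the interested reader''), so the comparison is between your full argument and the paper's stated strategy. Your consistency, quantifier elimination, and completeness arguments (infinite atoms, the primitive-formula test, reduction to finite sublanguages with $\aleph_0$-categorical reducts) are the elementary route the paper has in mind. For dp-minimality you take a genuinely different path: the paper derives it from quantifier elimination together with Fact~\ref{ind}(2) (the convex-splitting characterization along indiscernible sequences), whereas you count $\Delta$-types directly to get VC-density one and then cite the proposition of Section~\ref{vc} that VC-density one implies dp-minimality; your route has the advantage of actually establishing the parenthetical claim ``in fact has VC-density one,'' which the paper's sketch does not separately address. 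For non-VC-minimality the paper says only ``use quantifier elimination,'' and your argument fills the real gap: mod a finite set bounded by the parameter length, each instance is a union of atoms over a finite dependence set; generating every $P_i$ forces, for each $i \in \omega_1$, some instance depending on $i$, hence uncountably many finite dependence sets; the uncountable $\Delta$-system lemma plus pigeonhole on the finitely many root profiles in $2^R$ produces two instances with private dependence predicates in disjoint petals and compatible witnessing profiles, and since all atoms over the finite union of the two dependence sets are infinite, the four toggle-combinations survive the finite corrections and yield a crossing pair, contradicting directedness. This extraction step is genuinely necessary --- VC-minimality places no cardinality bound on $\Phi$, so the mere uncountability of $\Phi$ that you establish en route is not yet a contradiction (e.g.\ the family of all $P_i x$ is uncountable, and fails directedness only for the reason your crossing argument exhibits). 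Two small points to tighten in a write-up: discard the finitely many indices whose chosen dependence predicate lands in the root $R$, so that $i \in D_\alpha \setminus R$ and $j \in D_\beta \setminus R$ genuinely lie in the petals (your ``$i \in S_\alpha \setminus R$'' should refer to the dependence set $D_\alpha$, not the full predicate support $S_\alpha$), and say explicitly that toggling $P_j$ leaves membership in $B$ unchanged because $j \notin D_\alpha$ (and symmetrically), which is precisely what petal-disjointness buys you.
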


\begin{proof} We sketch a proof and leave the details to the interested reader.  Completeness and quantifier elimination
for $T$ are elementary.   Quantifier elimination  and
Fact~\ref{ind}(2) yield that $T$ is dp-minimal.  For the non-VC minimality of $T$ use quantifier elimination
show that there is no family $\Phi$ of formulae $\phi(x, \ob{y})$ satisfying the compatibility conditions  in the
definition of VC-minimality
and so that any $P_i(x)$ can be obtained by a finite Boolean combination of sets defined by instances of $\Phi$. \end{proof}

Thus we have an example of a theory which has VC-density one (and hence is dp-minimal) but is not VC-minimal.
We do not have an example of a theory which is dp-minimal but does not have VC-density one.

\section{Weakly o-minimal theories}\label{wom}

In this section we focus on weakly o-minimal theories and structures.  To begin with we show that any
weakly o-minimal theory is dp-minimal.   Dp-minimality for weakly o-minimal
theories first appears in~\cite[Corollary 3.8]{alfdpmin} where it is shown that any weakly
o-minimal theory obtained via ``Shelah expansion''---which
we discuss below---is dp-minimal.  We show via an example that not all weakly o-minimal theories may be obtained this in
way.  The dp-minimality of weakly o-minimal theories is also established in~\cite{adlervc} and~\cite{vcdense}
by indirect arguments.  Here we give a short
direct proof.

\begin{thm}\label{weakomin}  A weakly o-minimal theory is dp-minimal.
\end{thm}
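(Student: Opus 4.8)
The plan is to prove the contrapositive: assuming the theory $T$ is not dp-minimal, I would produce a violation of weak o-minimality. By Fact~\ref{config}, it suffices to show that one cannot have formulae $\phi_0(x,\ob{y})$ and $\phi_1(x,\ob{y})$ together with mutually indiscernible sequences $\{\ob{a}_i : i \in \omega\}$ and $\{\ob{b}_i : i \in \omega\}$ and an element $c$ witnessing
\[\phi_0(c,\ob{a}_0) \wedge \neg\phi_0(c,\ob{a}_1) \wedge \phi_1(c,\ob{b}_0) \wedge \neg\phi_1(c,\ob{b}_1).\]
So I assume such a configuration exists and aim for a contradiction with weak o-minimality.

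The key tool is the defining feature of weak o-minimality: every definable subset of the home sort is a finite union of convex sets (in the underlying linear order). First I would work with the single-variable sets $\phi_0(x,\ob{a}_i)$ and $\phi_1(x,\ob{b}_j)$. Each such set is a finite union of convex sets, and by indiscernibility the number of convex pieces is a constant, say $N_0$ for the $\phi_0$-sets and $N_1$ for the $\phi_1$-sets. The plan is to track the endpoints of these convex pieces. For the sequence $\{\ob{a}_i\}$, the endpoints of $\phi_0(x,\ob{a}_i)$ vary with $i$, and by mutual indiscernibility these endpoints form an indiscernible sequence in the (densely ordered, value-enriched) structure. The crucial observation is that in a linearly ordered structure, an indiscernible sequence of tuples of endpoints is monotone in each coordinate: each endpoint either moves strictly monotonically or stays constant as $i$ increases.

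The heart of the argument is then a counting/ordering obstruction. Because $c$ lies in $\phi_0(c,\ob{a}_0)$ but not in $\phi_0(c,\ob{a}_1)$, the element $c$ is separated from the convex pieces of the two sets differently, and the monotone movement of the endpoints forces $c$'s membership in $\phi_0(x,\ob{a}_i)$ to stabilize for all large $i$ and (by reversing) for all small $i$, contradicting that $c$ alternates between $i=0$ and $i=1$ in a way that must recur along the full indiscernible sequence extended to $\ZZ$. More precisely, I would extend to $\{\ob{a}_i : i \in \ZZ\}$ and note that $c$'s truth value of $\phi_0(c,\ob{a}_i)$ is determined by where $c$ sits relative to the monotonically-moving endpoints; since each endpoint is monotone, $c$ can cross any fixed endpoint at most once, so the truth value of $\phi_0(c,\ob{a}_i)$ changes only finitely often along $i \in \ZZ$, hence is eventually constant in both directions. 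The same holds for $\phi_1$ and $\{\ob{b}_j\}$. But the configuration in Fact~\ref{config}, once fed back through the construction in its proof, demands that the corresponding $\psi_0(c,\ob{d}_i)$ holds for exactly one value of $i$, which is impossible if the underlying truth values are eventually constant.

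The main obstacle I anticipate is making the ``endpoints form a monotone indiscernible sequence'' step fully rigorous, since the endpoints need not be elements of the home sort (they may be limits, i.e. cuts) and the relevant definable-convexity data for a formula $\phi_0(x,\ob{a}_i)$ must be handled uniformly in $i$. The clean way around this is to reason entirely with truth values of $c$ against the parameters rather than with the endpoints themselves: for each $i$, record the truth value $\phi_0(c,\ob{a}_i)$, and observe via mutual indiscernibility and weak o-minimality that the set $\{i : \models \phi_0(c,\ob{a}_i)\}$ must be a finite union of convex subsets of the index order, with a bound on the number of pieces independent of $c$. This boundedness is exactly the weakly o-minimal analogue of the partition statement in Fact~\ref{ind}(2), and it directly contradicts the existence of the alternating pattern required by a failure of dp-minimality. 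I expect the cleanest writeup to establish this boundedness lemma first and then invoke Fact~\ref{config} to finish.
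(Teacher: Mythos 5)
Your setup is fine as far as it goes, but the final contradiction step fails, and the failure is conceptual rather than technical. The configuration supplied by Fact~\ref{config} requires only a \emph{single} alternation along each sequence: $\phi_0(c,\ob{a}_0)\wedge\neg\phi_0(c,\ob{a}_1)$ and $\phi_1(c,\ob{b}_0)\wedge\neg\phi_1(c,\ob{b}_1)$. Your conclusion that the truth value of $\phi_0(c,\ob{a}_i)$ changes only finitely often along $i\in\ZZ$, hence is eventually constant in both directions, is perfectly compatible with this (for instance, true for all $i\leq 0$ and false for all $i\geq 1$), so no contradiction results. Finite alternation along indiscernible sequences is precisely NIP, which holds in every weakly o-minimal theory but also in plenty of NIP theories that are \emph{not} dp-minimal, so it cannot by itself refute an ICT pattern. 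The fallback ``boundedness lemma'' has the same defect: the content of Fact~\ref{ind}(2) is not that the index partition has boundedly many convex pieces, but that at most \emph{two} of the pieces are \emph{infinite}, and a uniform bound on the number of pieces does not deliver that. Indeed, your monotone-endpoint observation, pushed through for a single convex set $A_i=\phi_0(M,\ob{a}_i)$, shows that $\{i : c\in A_i\}$ is a convex set of indices --- and an ICT pattern asks, for each pair $(i,j)$, only for a witness whose $\phi_0$-trace is the singleton $\{i\}$ and whose $\phi_1$-trace is $\{j\}$, both convex. So everything you prove is consistent with the pattern you are trying to kill; each sequence analyzed separately yields nothing.

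The missing idea is that the contradiction must come from coupling the two sequences. The paper's proof first uses Fact~\ref{union} to reduce to the case where each $\phi(x,\ob{a}_i)$ and $\psi(x,\ob{b}_j)$ defines a single convex set $A_i$, $B_j$, then introduces the order $\prec$ on convex sets and exploits that the ICT pattern forbids any containment among the $A_i$ and $B_j$ and forbids any intersection $A_i\cap B_j$ from being trapped inside some other $B_k$ (or $A_l$). From this one deduces $A_0\cap A_1\neq\emptyset$ and $B_0\cap B_1\neq\emptyset$, and then, assuming say $B_0\prec A_0$, mutual indiscernibility gives $B_1\prec A_0$, and either $B_1\prec B_0$ (forcing $B_1\cap A_1\subseteq B_0$) or $B_0\prec B_1$ (forcing $B_0\cap A_1\subseteq B_1$), each contradicting the consistency demands of the pattern. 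Your endpoint machinery (modulo the endpoints living in the Dedekind completion, which you rightly flag) is sound but aimed at the wrong target; without an argument about the relative positions of the $A$-family and the $B$-family, the proof cannot close.
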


\begin{proof}  Suppose the result fails for a weakly o-minimal theory $T$ and fix $\mf{M} \models T$.  Let $\phi(x,\ob{y})$ and
$\psi(x, \ob{y})$ be formulae together with mutually indiscernible sequences $\{\ob{a}_i: i \in \omega\}$ and
$\{\ob{b}_i : i \in \omega\}$ forming an ICT pattern.  By Fact~\ref{union} we assume that
for all $i,j \in \omega$ that $\phi(x, \ob{a}_i)$ and $\psi(x, \ob{b}_j)$ define single convex sets.  To ease
notation let  $A_i=\phi(M, \ob{a}_i)$ and $B_i=\psi(x, \ob{b}_i)$.  We
assume these convex sets are infinite since otherwise we obtain a contradiction.
  For convex sets $X, Y \subseteq M$  write $X \prec Y$ if for any $y \in Y$ there
is $x \in X$ so that $x<y$, $X \not\subseteq Y$, and $Y \not\subseteq X$.  Notice that $\prec$ linearly orders
any family of convex sets satisfying no containment relations.  We assume that for all $i \in \omega$ that
$A_i$ and $B_i$ are bounded convex sets since otherwise we easily arrive at a contradiction.
Note that there can be no containments among any of the sets $A_i$ and $B_j$.  Without loss of generality
 assume that $A_0 \prec A_1$.
We show that $A_0 \cap A_1 \not= \emptyset$. If not then
 $A_0 \prec B_0 \prec A_1$.   If $B_0 \prec B_1$ then $B_1 \cap A_0 \subseteq A_0$ contradicting that these
sets form an ICT pattern.  But if $B_1 \prec B_0$ then $B_0 \prec A_0 \subseteq B_1$ also contradicting that the
sets form an ICT pattern.
  Thus $A_0 \cap A_1 \not= \emptyset$ and
symmetrically $B_0 \cap B_1 \not= \emptyset$.  To finish suppose that $B_0 \prec A_0$ (the case that $A_0 \prec B_0$ is symmetric).
Thus also $B_0 \prec A_1$, and by mutual indiscernibility $B_1 \prec A_0$.
If $B_1 \prec B_0$ then $B_1 \cap A_1 \subseteq B_0$ violating that these sets form an ICT pattern.
  But if $B_0 \prec B_1$ then $B_0 \cap A_1 \subseteq
B_1$ also violating that these sets form on ICT pattern.  Thus in all cases there is  a contradiction,\end{proof}

This theorem immediately yields:

\begin{cor} Any o-minimal theory is dp-minimal.
\end{cor}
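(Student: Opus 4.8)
The plan is to derive the corollary as an immediate consequence of Theorem~\ref{weakomin}, by observing that o-minimality is just a strengthening of weak o-minimality. First I would recall the two definitions side by side. A (complete) theory $T$ is o-minimal if in every model $\mf{M} \models T$ every subset of $M$ definable with parameters (in a single variable) is a finite union of points and open intervals, whereas $T$ is weakly o-minimal if every such definable set is a finite union of convex subsets of $M$. The only thing to verify is the set-theoretic inclusion between these two classes of definable sets.

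The key step is the observation that every point $\{a\}$ is convex and every open interval $(a,b)$ (as well as any unbounded interval) is convex, so a finite union of points and open intervals is in particular a finite union of convex sets. Hence, model by model, any o-minimal $\mf{M}$ satisfies the defining condition for weak o-minimality, and therefore every o-minimal theory $T$ is weakly o-minimal. Since both properties are formulated uniformly over all models of the complete theory, this comparison is legitimate at the level of theories. Having secured this, the corollary follows in one line: if $T$ is o-minimal then $T$ is weakly o-minimal, and by Theorem~\ref{weakomin} any weakly o-minimal theory is dp-minimal, so $T$ is dp-minimal.

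I do not expect any real obstacle here, which is precisely why the statement is recorded as a corollary rather than a theorem: all of the genuine content lives in the proof of Theorem~\ref{weakomin}, where the convexity analysis of the sets $A_i = \phi(M,\ob{a}_i)$ and $B_i = \psi(M,\ob{b}_i)$ together with mutual indiscernibility rules out an ICT pattern. The only subtlety worth flagging is a bookkeeping one, namely to make sure the ambient notion of weak o-minimality used in Theorem~\ref{weakomin} (following~\cite{weakominfield}) is the standard ``finite union of convex sets'' formulation, so that the containment o-minimal $\Rightarrow$ weakly o-minimal is literally the inclusion of definitional conditions; once that is confirmed, nothing further is needed.
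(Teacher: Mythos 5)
Your proof is correct and matches the paper's own (implicit) argument: the corollary is stated as an immediate consequence of Theorem~\ref{weakomin}, precisely because every definable subset of a model of an o-minimal theory, being a finite union of points and open intervals, is a finite union of convex sets, so o-minimal theories are weakly o-minimal. Your careful spelling out of this inclusion is exactly the one-line reasoning the paper leaves to the reader.
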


Theorem~\ref{weakomin} requires that the {\it theory} $T$ be weakly o-minimal.  We ask if the
theory of a weakly o-minimal structure $\mf{M}$ is dp-minimal---recall that a weakly o-minimal structure need
not have weakly o-minimal theory.
%The example in \cite[Section 2.5]{weakominfield} of a weakly o-minimal structure,$\mf{M}$ without
%weakly o-minimal theory immediately provides a negative answer to this question in that there is $\mf{N}$ elementarily
%equivalent to $\mf{M}$ in which there is an infinite definable discrete set.  But this is not possible in a dp-minimal
%theory by results from \cite{gooddp}.
We do not know of an example of a weakly o-minimal structure whose theory is not dp-minimal.  Given the
close relationship between a theory being weakly o-minimal and elimination of $\exists^{\infty}$
(for which see~\cite[Section 2]{cp}) we are lead to ask about the relationship between dp-minimality
and elimination of $\exists^{\infty}$.  For example by~\cite[Lemma 3.3]{gooddp} any
dp-minimal theory expanding that of divisible ordered Abelian groups must eliminate $\exists^{\infty}$.  However in full
generality
this implication is false. Consider the theory $T$ of an equivalence relation with infinitely many infinite classes
together with a finite class of size $n$ for each $n \in \NN$.  It is straightforward to verify that $T$ is dp-minimal
and $T$ obviously does not eliminate $\exists^{\infty}$.  Of course the converse is also false, the random graph eliminates
$\exists^{\infty}$ but is not dp-minimal.

As mentioned earlier Onshuus and Usvyatsov \cite[Corollary 3.8]{alfdpmin} prove that the theory of a Shelah
expansion of an o-minimal structure is dp-minimal.  The
 Shelah expansion
is constructed by beginning with a structure $\mf{M}$ and
an $\card{M}^+$-saturated elementary extension $\mf{N}$ of $\mf{M}$ and expanding $\mf{M}$ to $\mf{M}^*$ by adding predicates
for all sets of the form $X \cap M^m$ where $X \subseteq N^m$ is $\mf{N}$-definable.    It follows
by results in~\cite[Section 1]{shdepfo} that if $\mf{M}$ is o-minimal then $\mf{M}^*$ has weakly o-minimal theory.
Notice of course that the theory of any reduct of $\mf{M}^*$ must also be weakly o-minimal and dp-minimal.

To complete the picture regarding weak o-minimality and dp-minimality
we give an example of a structure $\mf{M}=\langle M, +,<, \dots \rangle$ which is
a model of the theory of divisible ordered Abelian groups, has weakly o-minimal theory, and
so that no model elementary equivalent to $\mf{M}$ may be obtained as a reduct of
a Shelah expansion.   Our example is similar to one exhibited in~\cite[Example 2.6.2]{weakominfield},
differing primarily
 in that we consider a theory expanding that of divisible ordered Abelian groups.

\begin{example} Let $\mf{M}$ be $\langle \QQ \times \QQ, <, +, (0,0), f, \lambda_q\rangle_{q \in \QQ}$ where $<$ is the lexicographic
order, $+$ is componentwise addition, $f$ is a unary function given by $f((a,b))=(-a,b)$ and if  $q \in \QQ$ then $\lambda_q$ is a
unary function given by $\lambda_q((a,b))=(qa,qb)$.
\end{example}

That $\mf{M}$ is a model of the theory of divisible ordered Abelian groups is immediate.

\begin{fact}  $Th(\mf{M})$ admits quantifier elimination.
\end{fact}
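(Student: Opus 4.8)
The plan is to verify quantifier elimination through the standard one-point extension criterion: it suffices to show that whenever $\mf{C}$ is a substructure of a model $\mf{A} \models Th(\mf{M})$, $\mf{B} \models Th(\mf{M})$ is sufficiently saturated, $g \colon \mf{C} \to \mf{B}$ is an embedding, and $a \in A$, then $g$ extends to an embedding of the substructure $\mf{C}\langle a\rangle$ into $\mf{B}$.

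First I would analyze the substructures. The function $f$ is a definable linear involution, so in any model $G$ the fixed set $H = \{x : f(x) = x\}$ and the anti-fixed set $H' = \{x : f(x) = -x\}$ are definable, and since we are in characteristic $0$ the projections $x \mapsto \frac12(x - f(x)) \in H'$ and $x \mapsto \frac12(x + f(x)) \in H$ give a definable direct sum decomposition $G = H' \oplus H$. The axioms of $Th(\mf{M})$ --- all true in $\mf{M}$ and hence in every model --- guarantee that $H$ is a convex subgroup and that the order is lexicographic with $H'$ dominant (every nonzero element of $H'$ exceeds in absolute value all of $H$), while $H$ and $H'$ are each nontrivial divisible ordered abelian groups. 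Because a substructure is exactly a $\QQ$-subspace closed under $f$, the same decomposition shows every substructure splits as $\mf{C} = C' \oplus C_0$ with $C' \subseteq H'$ and $C_0 \subseteq H$; since $g$ preserves $f$ it maps $C'$ into the anti-fixed part of $\mf{B}$ and $C_0$ into the fixed part.

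Next I would reduce the one-point extension to adding a single homogeneous element. Writing $a = a' + a_0$ with $a' \in H'$ and $a_0 \in H$, closure under $f$ and the $\lambda_q$ gives $\mf{C}\langle a\rangle = \mf{C}\langle a'\rangle\langle a_0\rangle$, so it is enough to extend by an element lying entirely in $H'$ or entirely in $H$. The crucial computation is that the quantifier-free type of such a homogeneous element over $\mf{C}$ is determined by a single cut. Indeed, for $a' \in H' \setminus C'$ and any term $q a' + c$ with $c = c' + c_0 \in \mf{C}$ and $q \neq 0$, the sign of $q a' + c$ equals the sign of its dominant part $q a' + c'$ (the $H$-part $c_0$ being infinitesimal, and the sum being nonzero because $a' \notin C'$); hence the full quantifier-free type of $a'$ is captured by the cut it realizes over $C'$ inside the divisible ordered abelian group $H'$, together with the relation $f(a') = -a'$. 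Symmetrically, for $a_0 \in H \setminus C_0$ the type is captured by the cut of $a_0$ over $C_0$ in $H$ together with $f(a_0) = a_0$.

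Finally I would realize these cuts in the target. Since the anti-fixed and fixed parts of $\mf{B}$ are nontrivial --- hence densely ordered and unbounded --- divisible ordered abelian groups, the partial type asserting ``$f(x) = -x$ and $x$ realizes the $g$-image of the cut of $a'$ over $C'$'' is finitely satisfiable in $\mf{B}$, so it is realized by some $b'$ in the anti-fixed part once $\mf{B}$ is taken sufficiently saturated; likewise $a_0$ yields some $b_0$ in the fixed part. Sending $a'$ (resp. $a_0$) to $b'$ (resp. $b_0$) then extends $g$ to an embedding of $\mf{C}\langle a\rangle$, completing the criterion. I expect the main obstacle to be the careful bookkeeping in the reduction step: one must confirm that the lexicographic interaction between the dominant factor $H'$ and the infinitesimal factor $H$ never introduces new equalities or order relations beyond the single cut, so that the quantifier-free type genuinely reduces to data living in one classical factor, where quantifier elimination for divisible ordered abelian groups applies.
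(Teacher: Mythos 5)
Your proposal is correct, but it takes a genuinely different route from the paper's. The paper argues syntactically: it first normalizes terms so that $f$ is applied only to variables, observes that ``same first coordinate'' is quantifier-free definable by $\phi(x,y)$, namely $(x=y)\vee(x<y\wedge f(x)<f(y))\vee(y<x\wedge f(y)<f(x))$, reduces everything to eliminating a single existential of the form $\exists x\,(s_0<x<s_1\wedge t_0<f(x)<t_1)$, and then writes down an explicit five-clause quantifier-free equivalent. You instead run the semantic one-point-extension test with a saturated target, built on the eigenspace decomposition of the involution: $G=H'\oplus H$ via $x\mapsto \tfrac12(x-f(x))$ and $x\mapsto \tfrac12(x+f(x))$, with $H$ convex, $H'$ dominant, and both factors nontrivial divisible ordered abelian groups---each of these being a first-order consequence of $Th(\mf{M})$, which is all the test requires. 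Your key reduction is sound: since $f(a')=-a'$ collapses the $f$-coefficient, every term in one variable over a substructure evaluates at $a'\in H'$ to $qa'+c$; dominance pins its sign to the $H'$-part $qa'+c'$, the hypothesis $a'\notin C'$ rules out new equalities, and density and unboundedness of the nontrivial divisible factors plus saturation realize the image cut. The bookkeeping you flag on the $H$-side also closes without incident: for $qa_0+c$ with $c'\neq 0$ the sign is decided by $c'$ alone, independent of the cut, hence preserved automatically by $g$. As for what each approach buys: the paper's computation exhibits the quantifier-free equivalents explicitly, giving an effective elimination procedure, while your argument is more conceptual, avoids the case analysis, and is in fact the same back-and-forth-between-saturated-models technique that the paper itself deploys later for the quantifier elimination of the theory $T$ in Section~5; your decomposition also makes transparent the structural fact---a lexicographic product of two divisible ordered abelian groups with $f$ separating the factors---that implicitly underlies the paper's clause list.
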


\begin{proof}  First notice that any term $t$ is equivalent to a term $t^{\prime}$ so that the only
occurrences of $f$ in $t^{\prime}$ are applied to variables (i.e. if $f(s)$ appears in the construction of
$t^{\prime}$ then $s$ is a variable).  This follows easily since $f$ commutes with addition and multiplication
by a rational.  Next notice that for $a,b \in M$ $a,b$ have the same first coordinate if and only if
$\phi(a,b)$ holds where $\phi(x,y)$ is the formula \[(x=y) \vee (x<y \wedge f(x)<f(y)) \vee (y<x \wedge f(y)<f(x).\]

We reduce to the case that we must eliminate a quantifier from a formula of the form $\exists x (s_0 < x < s_1 \wedge
t_0 < f(x)< t_1)$ where $s_0,s_2, t_0, t_1$ are terms not involving $x$.  (The case where some of the $s_i$ or $t_i$
is $\pm \infty$ is similar.)  It follows that this formula is equivalent to the conjunction of:
\begin{itemize}

\item $s_0<s_1 \wedge t_0<t_1$
\item $\phi(s_0,s_1) \rightarrow (t_0 \leq f(s_0) \wedge f(s_1) \leq t_1)$
\item $(\neg \phi(s_0,s_1) \wedge \neg \phi(t_0,f(s_1)) \wedge \neg \phi(t_1, f(s_0))) \rightarrow f(s_1)<t_0<t_1<f(s_0)$.
\item $(\neg \phi(s_0,s_1) \wedge \phi(t_0,f(s_1))) \rightarrow (t_0 \leq f(s_1) \wedge t_0<t_1)$
\item $(\neg \phi(s_0,s_1) \wedge \phi(t_1,f(s_0))) \rightarrow (f(s_0) \leq t_1 \wedge t_0<t_1)$

\end{itemize} \end{proof}

\begin{cor} $Th(\mf{M})$ is weakly o-minimal.
\end{cor}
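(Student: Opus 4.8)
The plan is to use the quantifier elimination just proved to reduce weak o-minimality to a direct inspection of the sets defined by atomic formulas. First I would observe that in any linearly ordered set the finite unions of convex sets form a Boolean algebra: the complement of a convex set is a union of an initial and a final segment, the intersection of two convex sets is convex, and distributing intersections over unions propagates these closure properties to finite unions of convex sets. Since weak o-minimality of a theory asserts that, in every model, each definable subset of the line is a finite union of convex sets with a uniform bound on the number of pieces for each formula, quantifier elimination lets me restrict attention to the sets $\{x : \phi(x)\}$ for $\phi$ atomic (with parameters) and then reassemble the general case via the Boolean algebra closure.

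Next I would put the terms into normal form. Every atomic formula is $t_1(x)=t_2(x)$ or $t_1(x)<t_2(x)$; since $+$ and $\lambda_{-1}$ are available, each is equivalent to $s(x)=0$, $s(x)<0$, or $0<s(x)$ for a single term $s$. Using that $f$ commutes with $+$ and with each $\lambda_q$ and is an involution---the same observations that drive the quantifier elimination---every term in the single variable $x$ reduces to
\[ s(x)=\lambda_q(x)+\lambda_r(f(x))+c \]
with $q,r\in\QQ$ and $c$ a parameter. Reading off the action of $f$ and $\lambda_q$ on the two lexicographically ordered coordinates, the leading coordinate of $s(x)$ is an affine function of the leading coordinate of $x$ with slope $q-r$, while the lower coordinate of $s(x)$ is affine in the lower coordinate of $x$ with slope $q+r$.

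Finally I would settle the atomic case by analyzing $\{x:s(x)<0\}$ and $\{x:s(x)=0\}$ according to the coefficients, tracking that the number of convex pieces stays bounded independently of $c$ (this uniformity is what upgrades the statement from the structure $\mf{M}$ to its theory). When $q\neq r$ the sign of $s(x)$ is governed by its leading coordinate, so the truth value of $s(x)<0$ changes only as the leading coordinate of $x$ crosses a threshold, with a single exceptional fiber on which a secondary condition on the lower coordinate carves out a subinterval; these combine into an initial or final segment of the lexicographic order, hence a convex set. The main obstacle is the degenerate case $q=r$ with $c$ in the convex subgroup of infinitesimals, where the leading coordinate of $s(x)$ no longer varies with $x$ and the inequality collapses to a condition on the lower coordinate of $x$ alone; here one must use the precise compatibility of $f$ with the lexicographic order to check that the resulting set does not slice across infinitely many fibers but is again a finite union of convex sets. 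Confirming convexity in this degenerate regime is the crux of the argument; granting it, the Boolean algebra reduction from the first paragraph delivers weak o-minimality of $Th(\mf{M})$.
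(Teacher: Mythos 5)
Your skeleton---reduce to atomic formulas via the quantifier elimination, put every term in the normal form $s(x)=\lambda_q(x)+\lambda_r(f(x))+c$, and note that the leading coordinate of $s(x)$ has slope $q-r$ while the fiber coordinate has slope $q+r$---is exactly the analysis the paper leaves implicit (the corollary is stated there with no proof at all, as an immediate consequence of quantifier elimination), and your treatment of the case $q\neq r$ is correct. The problem is the step you explicitly defer. In the degenerate case $q=r\neq 0$ the convexity check you propose to ``grant'' does not merely require care: it fails. Take $q=r=\frac{1}{2}$ and $c=(0,0)$, so that $s(x)=\lambda_{1/2}(x)+\lambda_{1/2}(f(x))$. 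For $x=(a,b)$ one computes $s(x)=\left(\frac{a-a}{2},\frac{b+b}{2}\right)=(0,b)$, so the atomic formula $s(x)<0$ defines $\{(a,b):b<0\}$ and $s(x)=0$ defines $\QQ\times\{0\}$. Each of these sets meets every fiber $\{a\}\times\QQ$ (every class of the paper's same-first-coordinate formula $\phi$) in a proper piece, with excluded points lexicographically interleaved: $(0,-1)<(0,1)<(1,-1)$, where the outer two points satisfy $b<0$ and the middle one does not. Hence these sets have infinitely many convex components and are not finite unions of convex sets, and no appeal to ``the precise compatibility of $f$ with the lexicographic order'' can change a two-line evaluation of the term.

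So the gap you flag as the crux is genuine and, as your argument is set up, unfillable: the degenerate atomic sets really do slice across infinitely many fibers. Note this is not a defect peculiar to your route. Since $x+f(x)<0$ is already quantifier-free, quantifier elimination gives no protection here, and any proof of the corollary would in particular have to show that this formula defines a finite union of convex sets, which the computation above contradicts; the interaction of $f$ with componentwise addition makes the fiber projection $x\mapsto\lambda_{1/2}(x+f(x))$ a term of the language, and that is precisely what the paper's ``immediate'' derivation (and your proposal) overlooks. Your instinct to isolate the degenerate regime was right, but writing ``granting it'' papers over the one case where the statement, as the structure is literally defined, actually breaks; as written, the proposal does not establish the corollary, and the verification you defer cannot be carried out.
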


\begin{prop}  If $\mf{M}_0$ is an o-minimal reduct of a model of $Th(\mf{M})$  and $\mf{N}$ is
and elementary extension of $\mf{M}_0$ the graph of $f$ is not definable in the structure induced by $\mf{N}$ on $M_0$.
\end{prop}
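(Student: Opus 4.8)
The plan is to argue by contradiction. Suppose the graph $\Gamma_f=\{(x,f(x)):x\in M_0\}$ were definable in the structure induced by $\mf{N}$ on $M_0$, so that $\Gamma_f=Y\cap M_0^2$ for some $\mf{N}$-definable $Y\subseteq N^2$. The argument rests on two features. First, $\mf{N}$ is o-minimal: it is elementarily equivalent to $\mf{M}_0$, which (being a reduct of the ordered $\QQ$-vector space underlying a model of $Th(\mf{M})$) is o-minimal with o-minimal theory, and so $Y$ admits an o-minimal cell decomposition. Second, $f$ interacts rigidly with the $\mf{M}$-definable convex equivalence relation $\phi$ (``equal first coordinate'') of the previous proof: $\phi$ partitions $M$ into infinitely many infinite convex classes $F_a=\{(a,b):b\in\QQ\}$, and $f$ carries each class bijectively onto another, acting \emph{order-preservingly} within a class but \emph{reversing} order across classes. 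Concretely, for $u<v$ in $M_0$ one has $f(u)<f(v)$ exactly when $u,v$ lie in the same class, and $f(u)>f(v)$ exactly when they lie in different classes.

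First I would fix the interval $I=(0,(1,0))$ of $N$, whose endpoints lie in $M_0$, and observe that $I\cap M_0$ contains the entire class $F_a$ for every $a\in\QQ\cap(0,1)$; in particular it meets infinitely many classes, each in infinitely many points. Applying cell decomposition and the Monotonicity Theorem to $Y$ over $I$, I would partition $I$ into finitely many points and open subintervals so that over each subinterval $Y$ is a finite union of graphs of continuous strictly monotone functions $\gamma_1<\dots<\gamma_r$ together with some of the bands between consecutive graphs. Since $Y\cap M_0^2=\Gamma_f$, for every $x\in I\cap M_0$ the fibre $Y_x$ meets $M_0$ in the single point $f(x)$. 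Reading off where that point comes from, I would cover $\Gamma_f\restriction(I\cap M_0)$ by finitely many pieces: one for each graph $\gamma_j$ in $Y$ (the set of $x$ with $f(x)=\gamma_j(x)$) and one for each band in $Y$ (the set of $x$ for which the unique $M_0$-point of the band's fibre is $f(x)$). On a graph-piece, $f$ agrees with the monotone $\gamma_j$, hence is monotone.

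The main obstacle is the band-pieces, where the point $f(x)$ is supplied by a two-dimensional cell rather than by a function graph; I must still show $f$ is monotone there. Here I would exploit that the band's fibre $(\gamma_k(x),\gamma_{k+1}(x))$ meets $M_0$ in exactly the one point $f(x)$: as $M_0$ is dense in itself, this forces the lower boundary value $\gamma_k(x)$ into the monad of $f(x)$, i.e. no element of $M_0$ lies strictly between $\gamma_k(x)$ and $f(x)$. Taking $\gamma_k$ increasing (the decreasing case being symmetric), if $x<x'$ lay on such a piece with $f(x)>f(x')$, then $\gamma_k(x)<\gamma_k(x')<f(x')<f(x)$, so $f(x')$ is an $M_0$-point strictly between $\gamma_k(x)$ and $f(x)$, contradicting that $(\gamma_k(x),f(x))$ contains no point of $M_0$. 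Thus $f$ is monotone on each band-piece as well, and altogether $\Gamma_f\restriction(I\cap M_0)$ is covered by finitely many monotone pieces.

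Finally I would reach the contradiction by a counting argument resting on the rigidity in the first paragraph. Any \emph{increasing} piece is contained in a single class (its points are pairwise order-preserving, hence pairwise $\phi$-equivalent), while any \emph{decreasing} piece meets each class in at most one point. Now fix one of the infinitely many classes $F_a\subseteq I\cap M_0$ with $a\in\QQ\cap(0,1)$: the finitely many decreasing pieces cover only finitely many of its infinitely many points, so some increasing piece must lie inside $F_a$; and since increasing pieces lie in a single class, distinct classes require distinct increasing pieces. This forces infinitely many increasing pieces, contradicting that there are only finitely many pieces in all. Hence $\Gamma_f$ is not a trace of any $\mf{N}$-definable set, which is exactly the assertion of the proposition.
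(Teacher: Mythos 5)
Your proof is correct, but it takes a genuinely different route from the paper's. Both arguments open the same way: reduce ``definable in the structure induced by $\mf{N}$ on $M_0$'' to ``trace $Y \cap M_0^2$ of a single $\mf{N}$-definable set'' (you do this silently, and should make it explicit --- a definable set of the induced structure involves quantifiers over $M_0$, so this reduction is exactly Shelah's quantifier elimination for externally definable sets, the paper's citation \cite[Section 1]{shdepfo}, which is also where the paper's harmless saturation assumption on $\mf{N}$ enters), and both then cell-decompose $Y$ in the o-minimal $\mf{N}$ and exploit the same rigidity: $f$ is increasing within each $\phi$-class and decreasing across classes. After that the strategies diverge. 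The paper applies the induced structure's weak o-minimality a second time: the sets $\{x : (x,f(x)) \in C_i\}$ are induced-definable, hence finite unions of convex sets, so a single cell's trace contains the graph of $f$ on an entire final segment $((a,q),\infty)$, and a three-point computation at $(a+1,q)$, $(a+1,q+1)$, $(a+2,q)$ makes one function (or band boundary) simultaneously increasing and decreasing. You instead cover the graph over one interval by finitely many pieces, prove $f$ monotone on each --- your monad-style observation for band pieces, that no point of $M_0$ lies strictly between $\gamma_k(x)$ and $f(x)$, is the right replacement for the paper's band computation --- and then count: an increasing piece sits inside one class, a decreasing piece meets each class at most once, and infinitely many infinite classes defeat finitely many pieces. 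What your route buys is independence from the second use of weak o-minimality of the induced structure and from any localization to a tail; what the paper's buys is a shorter endgame. Three trivial patches to yours: allow constant functions in the decomposition (a constant graph-piece has at most one point of the graph by injectivity of $f$, and a constant lower band boundary goes through your increasing case verbatim); note that a band unbounded below or above traces $M_0$ in no point or infinitely many, since $M_0$ has no endpoints, so such band-pieces are empty; and replace the concrete interval $(0,(1,0))$ --- which presumes $\mf{M}_0$ is a reduct of $\mf{M}$ itself rather than of an arbitrary model of $Th(\mf{M})$ --- by any interval whose $M_0$-endpoints are $\phi$-inequivalent, using that $Th(\mf{M})$ proves the classes are convex, infinite, and densely ordered.
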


\begin{proof} Without loss of generality $\mf{N}$ is $\card{M_0}$-saturated.
By~\cite[Section 1]{shdepfo} it suffices to show that for no definable $X \subset (N)^2$ is
$X \cap M_0$ the graph of $f$.  By the o-minimality of $\mf{N}$ we decompose $X$ into cells
$C_1, \dots, C_n$.  By~\cite[Section 1]{shdepfo}) the structure induced on $M_0$ by $\mf{N}$ is weakly
o-minimal and thus for some $1 \leq i \leq n$ and for some $(a,q) \in M_0$
the graph of  $f \restriction ((a,q), \infty)$ is contained in  $C_i \cap M_0$.  There are now two
equally simply cases.  Either $C_i$ is the graph of a continuous (without loss of generality) monotone function or
$C_i$ is the region between the graphs of two continuous functions $g_1$ and $g_2$ which we may also assume to
be monotone.

If $C_i$ is the graph of $g$ then  $g((a+1,q))=(-a-1,q)$ and $g((a+1, q+1))=(-a+1,q+1)$. Thus  $g$ is increasing.
 But also $g((a+2,q))=(-a-2,q)$ and hence $g$ is decreasing, a contradiction.

If $C_i$ is the region between $g_1$ and $g_2$ then $g_1((a+1),q))<(-a-1,q)$ and
$g_1((a+1,q))>(-a-1,p)$ for all $p<q$.  Hence $g((a+1),q))>f((a+1,p))>g((a+1,p))$ and $g$ is increasing.
But $g_1((a+2,q))<(-a-2,q)$ and thus $g$ must be  decreasing, a contradiction.\end{proof}

\section{A complicated dp-minimal divisible ordered Abelian group}\label{nwom}

\maketitle

%Many thanks to an anonymous referee for pointing out some errors in an earlier version of this write-up.  Also, this
%referee may be
%correct that a ``valuation-theoretic language'' (presumably with a separate sort for the value group?) would be more
%appropriate.
%\textbf{However,} here is a problem: if we add a sort for the value group, and if the image of $P$ under the valuation
%map picks out
%an infinite discrete set (which is probably the most natural way to think of $P$), then the value group is not dp-minimal!

%Question: is there a good way to recast this example in a more natural ``valuation-theoretic language'' that preserves
%dp-minimality?

In this section we focus on theories $T$ which extend that of dense linear ordering and so necessarily contain a symbol
$<$.  A reasonable question arising out of the work found in~\cite{gooddp} is whether every dp-minimal
$T$ expanding the theory of divisible ordered Abelian groups must be weakly o-minimal.  In this section we show via an
example that the
answer to this question is ``no''.

For the example let $\RR((x^{\RR}))$ be the field of generalized power series with real coefficients, real
exponents, and well-ordered supports.  For $a \in \RR((x^{\RR}))$ write
$v(a)$ for its valuation.   We wish to consider only the additive structure of the field augmented with a new relation.
Let $\mc{L}$ be the language consisting of a binary function $+$, a binary relation $<$, a constant $0$, unary functions
$s_q$ for $q \in \QQ$, a unary predicate $P$, and binary predicates $R_n$ for each natural number $n \in \omega$ (including $0$).

Let $\mf{R}$ be the $\mc{L}$ structure with universe $\RR((x^{\RR}))$ where $+,<,0$ are
interpreted in the obvious way.  Interpret the $s_q$ as
multiplication by $q$ for each $q \in \QQ$.  Interpret $P$ as $\{x \in \RR((x^{\RR})) : v(x) \in \ZZ\}$.
Let $R_0$ be the equivalence relation of being in the same connected component of $P$ or of $\RR((x^{\RR})) \setminus P$.  For each $n > 0$, let $R_n$ be the set of all pairs so that either $x \in P$ and there is a sequence $x = x_0 < x_1 < \ldots < x_n = y$
such that $x_i \in P$ if and only if $i$ is even, but
there is no such sequence $x = x_0 < x_1 < \ldots < x_{n+1} = y$; or else  $x \notin P$ and
there is a sequence $x = x_0 < x_1 < \ldots < x_n = y$ such that $x_i \in P$ if and only if $i$ is odd,
but there is no sequence $x = x_0 < x_1 < \ldots < x_{n+1} = y$.
Notice that the $R_n$ are definable in the language with just the group structure and $P$. We add them for quantifier elimination.

Our first goal is to axiomatize $Th(\mf{R})$ and to show this theory has quantifier elimination.
To this end we describe a theory $T$ we intend to show axiomatizes $Th(\mf{R})$.  $T$ consists of:
\begin{enumerate}

\item[(1) ]  The usual axioms for an ordered divisible Abelian group in the language $\left\{+, <, 0\right\}$, and $s_q$ denotes scalar
multiplication by $q$.

\item[(2) ]  $0 \in P$.

\item[(3) ]  $x \in P$ if and only if $-x \in P$.

\item[(4) ]  $P$ and $\neg P$ are open sets.

\item[(5) ]  If $x \leq y$ and the interval $[x,y] \subseteq P$, then for any $\QQ$-linear combination $z$ of $x$ and $y$ with positive
coefficients, $z$ lies in the same connected component of $P$ as $x$ and $y$.

\item[(5')]  If $x \leq y$ and the interval $[x,y] \subseteq \neg P$, then any $\QQ$-linear combination $z$ of $x$ and $y$
with positive
coefficients, $z$ lies in the same connected component of $\neg P$ as $x$ and $y$.

\item[(6)] $R_0$ is a symmetric relation, and if $x \leq y$, then $R_0(x,y)$ holds if and only if the interval $[x,y]$ lies entirely within $P$ or entirely within $\neg P$.

\item[(7) ] For any $x, y$ and positive $n < \omega$, $R_n(x,y)$ holds if and only if $x < y$ and there are exactly $n$ ``alternations
of $P$'' between $x$ and $y$.  More precisely, either:

\begin{enumerate}

\item[(A)] $x \in P$ and there is a sequence $x = x_0 < x_1 < \ldots < x_n = y$ such that $x_i \in P$ if and only if $i$ is even, but
there is no such sequence $x = x_0 < x_1 < \ldots < x_{n+1} = y$; or else

\item[(B)] $x \notin P$ and there is a sequence $x = x_0 < x_1 < \ldots < x_n = y$ such that $x_i \in P$ if and only if $i$ is odd,
but there is no sequence $x = x_0 < x_1 < \ldots < x_{n+1} = y$.

\end{enumerate}

\item[(8) ] For any positive $x$ there is a $y$ such that $R_1(x,y)$.

\item[(8')] For any positive $x$ there is a $y$ such that $R_1(y, x)$.

\end{enumerate}

For  $M \models T$ and  $a \in M$ we introduce some useful notation.  If $a \in P(M)$ (respectively, $\neg P(M)$) and $C$ is the
convex component of $P(M)$ (or $\neg P(M)$) containing $a$, then $[a] = C \cup -C$.
We say $[a] \leq [b]$ if $|a| \leq |b|$,
and let $[a]_{\leq} = \bigcup_{[b] \leq [a]} [b]$.  So the content of Axiom~8 is that the induced ordering on the classes $[a]$
is a discrete ordering with a left endpoint $[0]$ but no right endpoint.

\begin{lem}
\label{classes}
1. If $[a] < [b]$, then $a + b \in [b]$.

2. If $[a] = [b]$, then $a + b \in [a]_{\leq}$.

3. $[a]_{\leq}$ is closed under $\QQ$-linear combinations.
\end{lem}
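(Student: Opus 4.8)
The plan is to prove the three parts in the order (2), (1), (3): part~2 is self-contained, part~1 relies on it in its hardest case, and part~3 is a formal consequence of the first two once we observe that scalar multiplication preserves classes. Throughout I would use that each class $[c]$ is a symmetric set $C \cup -C$ with $C$ a convex component of $P(M)$ or $\neg P(M)$, that the classes partition $M$, and that $[u] \leq [v]$ iff $|u| \leq |v|$. The workhorses are axioms~(5) and~(5'), which say that a positive $\QQ$-linear combination of two points lying in a common convex piece of $P$ (respectively $\neg P$) remains in that same piece.

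For part~2, take $a,b$ with $[a]=[b]$. If $a,b$ have the same sign then, after negating both if necessary (using symmetry of classes), they lie in a common positive component $C$, so $[a,b]\subseteq C$ by convexity and $a+b=1\cdot a+1\cdot b$ is a positive $\QQ$-combination; axiom~(5) or~(5') then places $a+b$ in $C$, hence in $[a]\subseteq[a]_{\leq}$. If $a,b$ have opposite signs then $|a+b|<\max(|a|,|b|)$, an element of class $[a]$, so $[a+b]\leq[a]$ by the definition of the order and $a+b\in[a]_{\leq}$. The same computation applied with $a=b$, or with a positive rational scalar, yields the stronger facts $2b\in[b]$ and $qb\in[b]$ for $q\in\QQ_{>0}$, which I would record for later use.

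For part~1, assume $[a]<[b]$, so $|a|<|b|$. When $a,b$ share a sign, say both positive, I have $0<a<b$ and hence $b<a+b<2b$; since $b,2b$ lie in the common positive component $C$ (using $2b\in[b]$) and $C$ is convex, the interval $[b,2b]$, and in particular $a+b$, lies in $C$. The negative same-sign case is symmetric. The delicate case is opposite signs, say $b>0>a$ with $b>-a$, where $0<a+b<b$ and a naive interval argument is unavailable because $a+b$ could a priori fall into a lower class. Here I argue by contradiction: suppose $[a+b]<[b]$. Then $b=(a+b)+|a|$ writes $b$ as a sum of two \emph{positive} elements whose classes are both $<[b]$; applying the already-proved same-sign instance of part~1 when $[a+b]\neq[\,|a|\,]$, or part~2 when $[a+b]=[\,|a|\,]$, forces $b$ into a class strictly below $[b]$, contradicting $b\in[b]$ since the classes are pairwise disjoint. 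Hence $a+b\in[b]$, and the remaining sign arrangement is symmetric.

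For part~3, I first note scalar closure: for $q\in\QQ$ the element $qz$ lies in $[z]$ when $q>0$ (as the positive combination $\tfrac{q}{2}z+\tfrac{q}{2}z$ via axiom~(5)/(5')), in $[z]$ by symmetry when $q<0$, and equals $0\in[0]$ when $q=0$; in every case $[qz]\leq[z]$, so $[a]_{\leq}$ is closed under each $s_q$. Closure under sums then follows from parts~1 and~2: for $z_1,z_2\in[a]_{\leq}$, either $[z_1]\neq[z_2]$ and part~1 puts $z_1+z_2$ into the larger of the two classes, still $\leq[a]$, or $[z_1]=[z_2]$ and part~2 gives $z_1+z_2\in[z_1]_{\leq}\subseteq[a]_{\leq}$. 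An induction on the number of summands completes part~3. The main obstacle throughout is the opposite-sign case of part~1: unlike the additive same-sign cases it involves genuine cancellation and cannot be handled by convexity alone, which is precisely why it must be bootstrapped from the additive cases by rewriting the dominant element as a positive sum of its two smaller-class pieces.
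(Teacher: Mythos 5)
Your proof is correct, and parts 2 and 3 run essentially as in the paper; the genuine divergence is in the mixed-sign case of part 1, together with your reordering of the parts. The paper proves part 1 first and handles $a<0<b$ by two direct applications of Axiom 5: if $a+b\notin[b]$, then since $2a+2b$ lies in the same convex component as $a+b$, convexity forces $2a+2b<b$, hence $b<2(-a)$; but $-a$ and $2(-a)$ lie in one convex component (Axioms 3 and 5), so $-a<b<2(-a)$ squeezes $b$ into $[a]$, a contradiction. You instead prove part 2 first and bootstrap the mixed-sign case of part 1 from the already-established same-sign case: writing $b=(a+b)+(-a)$ as a sum of two positive elements whose classes are (under your contradiction hypothesis) both strictly below $[b]$, the same-sign instance of part 1, or part 2 when the two classes coincide, forces $b$ into a class strictly below $[b]$, contradicting that the classes partition the model. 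Both arguments are sound; the paper's is shorter and purely local, while your reduction trades the second Axiom-5 computation for structural facts that you use implicitly and should really record — namely that distinct classes have disjoint, convex, symmetric supports, so the order $[u]\leq[v]$ iff $|u|\leq|v|$ is well defined, total, and antisymmetric on classes, and that for positive elements $[x]<[y]$ implies $x<y$ (needed to invoke the same-sign case). For part 3 you are in fact slightly more careful than the paper, whose ``the rest follows by 2'' tacitly also needs part 1 for sums of elements in distinct classes, exactly as you point out.
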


\begin{proof}
1. If $0 < a < b$, then $a + b > b$, so if $a + b \notin [b]$, then by Axiom~5, $2 b < a + b$.  But this implies that $b < a$,
contradiction.  If $a < 0 < b$, then $0 < -a < b$, so $0 < a+b < b$.  So if $a+b \notin [b]$, then by Axiom~5 again,
$2a + 2b < b$, and $b < 2 (-a)$, a contradiction to Axiom~5.  The other two cases are similar.

2. Without loss of generality $|a| \leq |b|$.  If $0 < a \leq b$, then $0 < a + b \leq 2 b \in [b]$, by Axiom~5.  If $a < 0 < b$,
then $0 < a + b < b$,
and so $a + b \in [b]_{\leq} = [a]_{\leq}$.  The other cases are similar.

3. For any nonzero $q \in \QQ$, $q a \in [a]$ by Axioms~3 and 5.  The rest follows by 2.\end{proof}

\begin{prop}
$T$ is complete and has quantifier elimination.
\end{prop}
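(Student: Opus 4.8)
The plan is to establish quantifier elimination by the standard one-point extension criterion and then deduce completeness. Since $T$ is consistent (it has the intended model $\mf{R}$) and the only quantifier-free sentences available are Boolean combinations of atomic statements about the constant $0$---whose truth values are fixed by the axioms (e.g.\ $0 \in P$ by Axiom~2)---quantifier elimination immediately yields completeness. For the elimination itself it suffices to show: whenever $\mf{M}, \mf{N} \models T$ with $\mf{N}$ sufficiently saturated, $A$ is a substructure of $\mf{M}$, $f : A \to \mf{N}$ is an embedding, and $a \in M$, then $f$ extends to an embedding of the substructure $\langle A, a\rangle$ generated by $A \cup \{a\}$. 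Since the language contains $+$, $0$ and the scalar maps $s_q$, every substructure is an ordered $\QQ$-vector space and $\langle A, a\rangle = A + \QQ a$; so I must produce $b \in N$ realizing over $f(A)$ the $f$-image of the quantifier-free type of $a$ over $A$. By saturation this reduces to showing that this type is consistent with $T$, which is the real content.

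The first step is to isolate the invariants of the quantifier-free type of $a$ over $A$. Using the discretely ordered system of classes $[a]$ (Axiom~8), I claim the type is determined by two pieces of data: the \emph{valuation data}, namely the class $[a]$ together with its location in the discrete value structure relative to the finitely many classes $\{[c] : c \in A\}$, as recorded by the relations $R_0$ and $R_n$ between $a$ and elements of $A$; and the \emph{cut data}, namely the position of $a$ in the ordered group $A + \QQ a$, equivalently the cut it fills in the divisible ordered quotient $[a]_{\leq}/\bigcup_{[b]<[a]}[b]$. The key reduction, carried out with Lemma~\ref{classes}, is that all remaining atomic information is forced by these two: by parts 1 and 2 of Lemma~\ref{classes} the class of any combination $q a + c$ (with $q \in \QQ$, $c \in A$) is determined by comparing $[a]$ with $[c]$, and since each class lies entirely in $P$ or entirely in $\neg P$, with consecutive classes alternating and $[0] \subseteq P$ (Axioms~2, 4, 6, 7), membership $q a + c \in P$ is determined once the class $[qa+c]$ is identified. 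Thus the apparently infinite list of conditions on all $\QQ$-combinations collapses to the standard valuation and cut data.

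The second step is to realize a consistent choice of this data in $\mf{N}$, by cases on how $[a]$ sits relative to the classes of $A$. If $[a] = [c]$ for some $c \in A$, I need only realize the prescribed cut inside an existing class; the quotient $[a]_{\leq}/\bigcup_{[b]<[a]}[b]$ is a nontrivial divisible ordered abelian group, so the cut is realized by density together with saturation, and Axioms~5 and~5' guarantee that the chosen $b$ lands in the correct $P$- or $\neg P$-component. If $[a]$ is a new class at finite distance from classes of $A$, it must occupy one of the finitely many gaps prescribed by the $R_n$-data, so I place it there and realize the cut as before. If $[a]$ lies at infinite (nonstandard) distance from, or strictly above, all classes of $A$, I invoke Axioms~8 and~8' (discreteness and the absence of a largest class) together with saturation of $\mf{N}$ to produce a class in the required position; the within-class cut is again handled by divisibility and density.

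The main obstacle I anticipate is the bookkeeping in the same-class case, where additive cancellation can drop a combination $qa + c$ into a strictly lower class (part 2 of Lemma~\ref{classes}): there one must check that each such descent, and the resulting $P$- and $R_n$-values, is already determined by the cut data and is consistent with $T$, so that the type being realized is both complete and satisfiable. Verifying that Lemma~\ref{classes} exactly controls the classes of all $\QQ$-linear combinations---so that no atomic formula about $a$ escapes the reduction in the first step---is the crux on which the whole argument rests; once that is in hand, the realization step is routine saturation and density.
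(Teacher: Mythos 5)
Your overall scaffolding (one-point extensions between saturated models, then completeness because the only constant is $0$ and the axioms decide all atomic sentences about it) matches the paper's back-and-forth setup, but your first-step reduction contains a genuine error, and it sits exactly at the heart of the matter. You claim the quantifier-free type of $a$ over $A$ is determined by two invariants: the location of the single class $[a]$ relative to the classes of $A$, plus the order cut of $a$ over $\Span_{\QQ}(A)$. This is false in the same-class case. The atomic type also records, for every $d \in \Span_{\QQ}(A)$ with $[d]=[a]$, the class $[a-d]$ and its position (via $<$ and the $R_n$) in the discrete class order relative to $\Span_{\QQ}(A)$, and this information is \emph{not} a consequence of your two invariants. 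Concretely, in $\mf{R}=\RR((x^{\RR}))$ take $A = \Span_{\QQ}(1)$ and compare $a'=1+x$ with $a''=1+x^{1/2}$: both lie in $[1]$ and realize the same cut over $\QQ$ (greater than every rational $\leq 1$, less than every rational $>1$), yet $P(a'-1)$ holds while $P(a''-1)$ fails, since $v(x)=1\in\ZZ$ but $v(x^{1/2})\notin\ZZ$. So your anticipated ``check that each such descent \dots is already determined by the cut data'' cannot succeed; the descent data is an independent invariant that must be added to the type and then realized in $\mf{N}$.

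This is precisely what the paper's Case~B does, and it is the substantial part of the proof your proposal elides. The paper introduces the differences $f^\ell_i = a' - d^\ell_i$ to the elements $d^\ell_i$ of $\Span_{\QQ}(\overline{a})$ below and above $a'$, transfers their class-locations to elements $g^\ell_i$ on the $\mf{N}$-side (conditions I and II), and then must find $b'$ with $[b'-e^\ell_i]=[g^\ell_i]$ for all $i$ simultaneously. Realizing these infinitely many constraints is not ``routine saturation and density'' inside the divisible quotient $[a]_{\leq}/\bigcup_{[b]<[a]}[b]$: the constraints live across different classes, and the argument genuinely splits on whether the sets $\{[g^0_i]\}$ and $\{[g^1_i]\}$ have least elements, using Lemma~\ref{classes} to propagate the finitely many arranged conditions to all indices, plus a separate compactness argument when no least element exists and when $T_0$ or $T_1$ is empty. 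Your Cases on new classes at finite or infinite distance (the paper's Cases C--E) are essentially right, but without the difference-class invariants and their realization, the central case of the quantifier elimination is open.
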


\begin{proof}
We prove both statements simultaneously by a back-and-forth argument: suppose that $M$ and $N$ are $\omega$-saturated models
of $T$, $\overline{a} = (a_0, \ldots, a_{n-1}) \subseteq M$, $\overline{b} = (b_0, \ldots, b_{n-1}) \subseteq N$,
and $\tp_{\textup{qf}}(\overline{a}, M) = \tp_{\textup{qf}}(\overline{b}, N)$.
Then for every $a' \in M$,
we show that there is some $b' \in N$ such that
$\tp_{\textup{qf}}((\overline{a}, a'), M) = \tp_{\textup{qf}}((\overline{b}, b'), N)$.

We do this by cases.

Case A: $a'$ is in the $\QQ$-linear span of $\overline{a}$.  Say $a' = \Sigma_{i < n} s_{q_i}(a_i)$.

Let $b' \in N$ to be the corresponding $\QQ$-linear combination of $\overline{b}$.  If we pick $i < n$
such that $q_i \neq 0$ and $[a_i]$ is as large as possible, then by Lemma~\ref{classes}, $[a'] = [a_i]$,
and similarly $[b'] = [b_i]$.  The equality of the quantifier-free types now follows directly.

Case B: $a'$ is not in the $\QQ$-linear span of $\overline{a}$ but there is an element $c$ in the $\QQ$-linear
span of $\overline{a}$ such that $[a'] = [c]$.

Without loss of generality, $a' > 0$.  Let $$S_0 = \left\{x \in \Span_{\QQ}(\overline{a})
\cap [c] : x < a' \right\}$$ and let $$S_1 = \left\{ x \in \Span_{\QQ}(\overline{a}) \cap [c] : a' < x \right\}.$$
For $\ell = 0,1$, let $\langle d^\ell_i : i < \omega \rangle$ be an enumeration of $S_\ell$.
Since $\tp_{\textup{qf}}(\overline{a}, M) = \tp_{\textup{qf}}(\overline{b}, N)$, we can take corresponding sets $T_0$ and $T_1$
in $N$, with corresponding enumerations $\langle e^\ell_i : i < \omega \rangle$.

Let $f^0_i = a' - d^0_i$ and let $f^1_i = d^1_i - a'$ (these are always positive points).  Then, using the
fact that the $[ \cdot ]$-classes in both $M$ and $N$ are discrete linear orderings, we can pick elements $g^\ell_i \in N$
such that:

I. If there is any $x \in \Span_{\QQ}(\overline{a})$ such that $[x] = [f^\ell_i]$ (or $R_n(x, f^\ell_i)$,
or $R_n(f^\ell_i, x)$), then let $y \in \Span_{\QQ}(\overline{b})$ be the corresponding element and pick $g^\ell_i$
such that $[g^\ell_i] = [y]$ (or $R_n(y, g^\ell_i)$, or $R_n(g^\ell_i, y)$);

II. If $x \in \Span_{\QQ}(\overline{a})$ and $[x] < [f^\ell_i]$ (or $[f^\ell_i] < [x]$), then
let $y \in \Span_{\QQ}(\overline{b})$ be the corresponding element, and we require that
$[y] < [g^\ell_i]$ (or $[g^\ell_i] < [y]$).

\begin{claim}
There is an element $b' \in N$ satisfying the conditions:

\begin{enumerate}
\item $b'$ is in the same $[\cdot]$-class as any element of $T_0$ or $T_1$;

\item $T_0 < b' < T_1$;

\item $[b' - e^\ell_i] = [g^\ell_i]$ for any $i < \omega$ and $\ell = 0,1$;

\item $b' \notin \Span_{\QQ}(\overline{b})$.
\end{enumerate}

\end{claim}

(Note that one of $T_0$ or $T_1$ may be empty, so half of the second condition may be vacuous.)

\begin{proof}
First assume $T_0$ and $T_1$ are nonempty, for simplicity.

One case is where $\left\{[g^0_i] : i < \omega \right\}$ and $\left\{[g^1_i] : i < \omega \right\}$
have least elements---then call these elements $[g^0_i]$ and $[g^1_j]$, and without loss of generality $[g^0_i] \leq [g^1_j]$.
If $[g^0_i] < [g^1_j]$ and $[g^0_i]^+$ denotes the positive elements of this class, then pick some $b' \in e^0_i + [g^0_i]^+$ such that:

i. $b' \notin \Span_{\QQ}(\overline{b})$,

ii. $T_0 < b' < T_1$, and

iii. For any $k$ such that $a' - d^0_k \in [f^0_i]$, $b' \in e^0_k + [g^0_i]^+$.

(This is always possible since $[g^0_i]^+$ is infinite, being closed under scaling by positive elements of $\QQ$,
and using compactness and $\omega$-saturation.)  Using Lemma~\ref{classes}, it follows that for any $k < \omega$
and $\ell = 0,1$, $[b' - e^\ell_k] = [g^\ell_k]$.  On the other hand, if $[g^0_i] = [g^1_i]$, then in picking the
element $b'$ as above, we can ensure in addition that $b' \in e^1_j - [g^1_i]^+$, and that for
every $k$ such that $d^1_k - a' \in [f^1_i]$, $b' \in e^1_k - [g^1_i]^+$; this is enough to ensure that $b'$ satisfies
the properties we want.

If $\left\{[g^0_i] : i < \omega \right\}$ and $\left\{[g^1_i] : i < \omega \right\}$ have no least elements, then we can use
compactness and $\omega$-saturation to pick $b' \in N$ such that for every $i < \omega$, $b' \in e^0_i + [g^0_i]^+$
and $b' \in e^1_i - [g^1_i]^+$, and it automatically follows that $b' \notin \Span_{\QQ}(\overline{b})$.  The ``mixed case''
(one of these sets has a least element, the other does not) is handled similarly.

Finally, if $T_0$ is empty, note that $T_1$ cannot have a least element (since if $x \in T_1$ then $\frac{1}{2} x$
must be as well), so the usual compactness argument ensures there is a $b'$ in the right $[\cdot]$-class such that $b'
< T_1$.  As above, we can also ensure that $[b' - e^1_i] = g^1_i$ for each $i < \omega$.  The case where $T_1 = \emptyset$
is symmetric.\end{proof}

With $b'$ as above, Lemma~\ref{classes} ensures that $\tp_{\textup{qf}}(\overline{b}, b') = \tp_{\textup{qf}}(\overline{a}, a')$.

Case C: Cases~A and B fail, but there is some $i < n$ such that $a' \in [a_i]_{\leq}$.

Choose $c_0, c_1 \in \Span_{\QQ}(\overline{a})$ such that $[c_0] < [a'] < [c_1]$  but the ``distances'' are minimized:
that is,
if possible, there is some positive $m$ such that $R_m(c_0, a')$ holds (and similarly for $c_1$), and such a number $m$ is
minimized.  There are subcases: for instance, if $R_{m_0}(c_0, a')$ and $R_{m_1}(a', c_1)$ hold, then we just need to
pick $b' \in N$ such that for the corresponding $d_0, d_1 \in \Span_{\QQ}(\overline{b})$, $R_{m_0}(d_0, b')$ and
$R_{m_1}(b', d_1)$ hold.
On the other hand, if there is no such $m_0$ and no such $m_1$, then compactness and $\omega$-saturation yield a
corresponding $b' \in N$.  The final subcase ($a'$ is a finite distance from one of the $c_i$ but not the other)
is handled in the same way, noting that Axiom~8 ensures that if there are infinitely many components of $P(N)$
between $d_0$ and $d_1$, then there are always elements $b' \in N$ such that $R_k(d_0, b')$ or $R_k(b', d_1)$, for
any $k < \omega$.

Case D: Previous cases fail, but there is some $i < n$ and some positive $m$ such that $R_m(a_i, |a'|)$ holds.

Choose $i$ and $m$ such that $m$ is minimal.  By Lemma~\ref{classes}, the truth values of $R_k(c, a')$ and $R_k(a', c)$
for any $c \in \Span_{\QQ}(\overline{a})$ are now uniquely determined.  By Axiom~8, there is a corresponding $b' \in N$.

Case E: The only remaining case is that $|a'|$ is greater than every $|a_i|$ and $R_m(a_i, |a'|)$ fails for
every possible $i$ and every $m < \omega$.  Then by Lemma~\ref{classes}, the only additional information needed
to determine $\tp_{\textup{qf}}((\overline{a}, a'), M)$ is the sign of $a'$.  A corresponding $b' \in N$
exists by Axiom~8 and $\omega$-saturation.\end{proof}

\begin{cor}\label{open} If $\mf{M} \models T$ and $X \subset M$ is infinite and definable then $X$ has interior yet $T$
is not weakly o-minimal.
\end{cor}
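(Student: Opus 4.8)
The plan is to derive both assertions from the quantifier elimination just proved, handling them separately; throughout, fix $\mf{M} \models T$ and work with the order topology on $M$.

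For the failure of weak o-minimality, the strategy is to exhibit a single infinite definable subset of $M$ that is not a finite union of convex sets, and the natural candidate is $P(M)$ itself. First I would note, using the description of $R_n$ in Axiom~7 together with Axiom~8, that as one ascends through the classes $[a]$ the convex components of $P(M)$ and of $\neg P(M)$ strictly alternate, and that by Axiom~8 (the induced ordering on the classes is discrete with least element $[0]$ and no greatest element) there are infinitely many such components. Hence $P(M)$ has infinitely many distinct convex components, and between any two consecutive ones lies a convex component of $\neg P(M)$. Since $P(M) \cap \neg P(M) = \emptyset$, no convex subset of $M$ can meet two distinct convex components of $P(M)$, so $P(M)$ is not a finite union of convex sets, and $T$ is not weakly o-minimal. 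I emphasize that this part uses only the axioms, not quantifier elimination.

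For the interior statement, let $\mc{K}$ be the family of subsets of $M$ of the form $F \cup U$ with $F$ finite and $U$ open. Then $\mc{K}$ is closed under finite unions and finite intersections, and every infinite member of $\mc{K}$ has nonempty interior. By the Proposition, any definable $X \subseteq M$ is a Boolean combination of atomic formulas over parameters; putting this in disjunctive normal form, it suffices to show that every atomic formula, and every negated atomic formula, in the single variable $x$ defines a set in $\mc{K}$. The atomic formulas have the forms $t_1(x) = t_2(x)$, $t_1(x) < t_2(x)$, $P(t(x))$, and $R_n(t_1(x), t_2(x))$, where each term is a combination $s_q(x) + c$. The order atomics and their negations define intervals, rays, and points, all in $\mc{K}$. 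The key topological observation for the other cases is that, since $P$ and $\neg P$ are both open by Axiom~4, $P(M)$ is in fact clopen, and therefore so is every convex component of $P(M)$ and of $\neg P(M)$. As $s_q$ and translation by $c$ are homeomorphisms of the order topology, each set $\{x : P(s_q(x)+c)\}$ is clopen, hence in $\mc{K}$, and likewise for its complement.

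The main obstacle is the relational atomics $R_n(t_1(x), t_2(x))$. Here I would argue that the truth value of $R_n(t_1(x), t_2(x))$ is determined by which clopen components contain the affine images $t_1(x)$ and $t_2(x)$ and by the number of alternations between them; invoking Lemma~\ref{classes} and the discreteness of the class ordering, this value remains constant while $t_1(x)$ and $t_2(x)$ stay in fixed components, and since those components are clopen the resulting defining set, and its complement, is clopen and so lies in $\mc{K}$. Granting this, every literal defines a member of $\mc{K}$, and closure of $\mc{K}$ under finite unions and intersections yields $X \in \mc{K}$; if $X$ is infinite then its open part is nonempty, so $X$ has interior. Once $T$ is known to be dp-minimal this half also follows from Simon's theorem~\cite{simon}, but the argument above is self-contained.
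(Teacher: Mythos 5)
Your proof is correct and is essentially the paper's argument: the paper disposes of Corollary~\ref{open} with the single line ``this follows immediately from the quantifier elimination,'' and your analysis of literals (points, rays, clopen preimages of $P$-components and of the $R_n$, closure of the class $\mc{K}$ under finite Boolean operations) is exactly the routine verification being left to the reader, with $P(M)$ itself witnessing the failure of weak o-minimality. One phrasing slip: ``no convex subset of $M$ can meet two distinct convex components of $P(M)$'' is false as stated (an interval can span the gap); you mean that any convex subset \emph{contained in} $P(M)$ lies in a single component, which is what a putative finite-union-of-convex-sets representation of $P(M)$ would require.
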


\begin{proof} This follows immediately from the quantifier elimination.\end{proof}

\begin{cor}
$T$ does not have the independence property.
\end{cor}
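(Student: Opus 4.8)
The plan is to read off the failure of the independence property from the quantifier elimination. Recall the standard reductions: a theory is NIP as soon as every formula $\phi(x,\ob{y})$ with a \emph{single} object variable $x$ is NIP, and for fixed $x$ the NIP formulas are closed under Boolean combinations. Since by the preceding proposition $T$ admits quantifier elimination, every such $\phi(x,\ob{y})$ is a Boolean combination of atomic formulas, so it suffices to check that each atomic formula in one object variable is NIP. Because every term is $\QQ$-affine, the atomic formulas split into three families: (i) linear equalities and inequalities $s_q(x)+\ell_1(\ob{y})\lessgtr \ell_2(\ob{y})$; (ii) instances $P(s_q(x)+\ell(\ob{y}))$ of the predicate; and (iii) instances $R_n(s_{q_1}(x)+\ell_1(\ob{y}),\,s_{q_2}(x)+\ell_2(\ob{y}))$ of the alternation relations. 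I would verify NIP for each family through the usual criterion that no atomic formula has infinitely many alternations along an indiscernible sequence of singletons $\{a_i:i<\omega\}$.

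Family (i) is immediate: the reduct to $\{+,<,0,(s_q)\}$ is an ordered divisible Abelian group, hence o-minimal and NIP, and these formulas define points and half-lines. The engine for (ii) and (iii) is Lemma~\ref{classes}(1), which says that the class of a sum equals the larger of the two summand classes whenever they differ. Two further observations turn this into a finiteness statement. First, by Axiom~3 every class $[a]$ lies entirely in $P$ or entirely in $\neg P$, and $P(s_q(x))\leftrightarrow P(x)$ for $q\neq 0$ (again Lemma~\ref{classes}); hence an instance $P(s_q(a_i)+\ell)$ is controlled by comparing $[a_i]$ with $[\ell]$. Second, along an indiscernible sequence the consecutive differences $a_{i+1}-a_i$ all lie in one fixed class, so by Lemma~\ref{classes}(1) the classes $[a_i]$ themselves stabilize after finitely many steps; combined with the fact that $P(a_i)$ is literally constant (being a one-variable formula), this forces $P(s_q(a_i)+\ell)$ to be eventually constant, whether $[a_i]$ dominates $[\ell]$ (value $\leftrightarrow P(a_i)$) or is dominated by it (value independent of $i$). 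Family (iii) is handled identically, since $R_n(u,w)$ merely records the finite class-distance between $u$ and $w$, which Lemma~\ref{classes} forces to stabilize or to exceed every standard $n$ as the dominating class grows.

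The main subtlety is the \emph{resonant} case in which the moving class never dominates the parameter: this occurs for $P(a_i+b)$ when every $a_i$ lies in the same class as $b$, and for family (iii) when $q_1=q_2$, which likewise reduces to near-equality. Here I would pass to the differences $d_i=a_i-b$ and again invoke indiscernibility: the increments $d_{i+1}-d_i=a_{i+1}-a_i$ occupy a single fixed class $[\delta]$, and by Lemma~\ref{classes} together with the observation that a sum of two positive elements of a common class stays in that class (Axiom~5), the classes $[d_i]$ can neither climb past $[\delta]$ forever nor oscillate. The discreteness of the ordering on classes guaranteed by Axiom~8 then pins $[d_i]$ (and the relevant $R_n$-distances) to an eventually constant value, bounding the number of alternations. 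Assembling the three families and closing under Boolean combinations shows that $T$ is NIP; this is precisely the ingredient which, together with inp-minimality, yields dp-minimality via the cited fact of Onshuus--Usvyatsov, and it recovers the failure of the independence property for $T$.
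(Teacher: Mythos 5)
Your overall architecture is legitimate and genuinely different from the paper's: you reduce NIP to atomic formulas in a single object variable via quantifier elimination and Boolean closure, then count alternations along indiscernible sequences of singletons, whereas the paper takes an indiscernible sequence of uncountable cofinality, splits into three global cases (all terms in one class; constant finite $R_n$-distance between consecutive terms; infinitely many classes between consecutive terms), and uses the fact that $\dcl(A)$ meets only finitely many $[\cdot]$-classes (Lemma~\ref{classes}) to extract a cofinal subsequence indiscernible over any finite $A$. The problem is that the ``engine'' you run your version on is false. You assert twice that along an indiscernible sequence the consecutive differences $a_{i+1}-a_i$ all lie in one fixed class, and deduce that the classes $[a_i]$ stabilize after finitely many steps. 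Neither holds: indiscernibility only says that the comparison of $[a_2-a_1]$ with $[a_3-a_2]$, a quantifier-free condition on the triple, comes out the same for every consecutive triple, so the sequences $[a_i]$ and $[a_{i+1}-a_i]$ are each constant \emph{or strictly monotone}. Strict monotonicity really occurs: in a saturated model take $a_1 < a_2 < \cdots$ all positive, all in $P$, with the $[a_i]$ pairwise at infinite $R_n$-distance (exactly the paper's third case). This sequence is indiscernible, $[a_i]$ increases strictly forever, and $[a_{i+1}-a_i] = [a_{i+1}]$ by Lemma~\ref{classes}(1), so the increments' classes increase strictly as well. The monotone case is repairable---a constant-or-strictly-monotone class sequence crosses each of the finitely many classes met by $\Span_{\QQ}$ of the parameters at most once, and $P$ is constant on each class---but you must actually run that case rather than rule it out.

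The deeper gap is the resonant case, which is the real content here, since it is precisely where a sum can drop to a strictly lower class (Lemma~\ref{classes}(2)); your treatment inherits the false premise and then appeals to Axiom~8, but discreteness of the class ordering does not by itself prevent $[d_i]$ from wandering among the classes below the increments' class. What actually closes it (in the case where the increments $\delta_i = a_{i+1}-a_i$ do share a class $[\delta]$) is a sign-and-class analysis: if $[d_i] > [\delta]$ the class freezes by Lemma~\ref{classes}(1); if $d_i > 0$ then within at most two steps $d_i$ lands in $[\delta]$ and stays there, since a sum of two positive elements of a common class remains in that class (Axiom~5 plus convexity); so unbounded movement would force $d_i < 0$ and $[d_i] = [\delta]$ for all $i$, making $[d_i]$---hence $P(d_i)$, as each class lies wholly in $P$ or $\neg P$---constant after all. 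You also never genuinely treat the two-sided instances $R_n\bigl(s_{q_1}(x)+\ell_1(\ob{y}),\, s_{q_2}(x)+\ell_2(\ob{y})\bigr)$ with $q_1 \neq q_2$ both nonzero, where both arguments move at different speeds; ``handled identically'' is not a proof there, since the class of the difference of the two arguments is itself moving. So: right strategy, and salvageable, but as written the central lemma your argument leans on is false and the crux case is not established; the paper's pigeonhole route through $\dcl(A)$ avoids this bookkeeping entirely.
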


\begin{proof}
Let $I$ be indiscernible over  $\emptyset$ with uncountable cofinality.  Without loss of generality
 $I$ is increasing, and there
are three possibilities: either all elements of $I$ are in the same $[ \cdot ]$-class, or else we have some
$n < \omega$ such that $R_n(a_i, a_{i+1})$ holds for every $a_i \in I$, or else neither of these hold and there
are infinitely many $[ \cdot ]$-classes between two adjacent elements of $I$.  Suppose that $A$ is any finite set.
Then there are only finitely many $[ \cdot ]$-classes in $\dcl(A)$ (by Lemma~\ref{classes}), and in either of the
three cases, it is straightforward to check using quantifier elimination that some cofinal subsequence of $I$ is indiscernible
over $A$.\end{proof}

\begin{cor}
$T$ is dp-minimal.
\end{cor}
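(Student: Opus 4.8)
The plan is to verify condition (3) (equivalently (2)) of Fact~\ref{ind}, refining the case analysis already used in the proof of the preceding corollary. So I would fix an indiscernible sequence $\langle a_i : i \in I \rangle$ and an element $c$, assuming the $a_i$ to be singletons and, since the sign of an element is $\emptyset$-definable, that the sequence is increasing and of constant sign, say positive. By quantifier elimination $\tp(a_i/c)$ is determined by the quantifier-free type, and since $\dcl(c) = \Span_{\QQ}(c)$ meets only the classes $[0]$ and $[c]$ (Lemma~\ref{classes}), this is governed by three pieces of data: the value of $P(a_i)$, the order-cut that $a_i$ makes in $\Span_{\QQ}(c)$, and the $[\,\cdot\,]$-class relation together with the (possibly infinite) $R_n$-distance between $a_i$ and $c$. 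As in the preceding corollary, indiscernibility forces the sequence into one of two shapes: either (I) all $a_i$ lie in a single class $[a_0] = K$, or (II) the classes $[a_i]$ are strictly increasing with infinitely many classes strictly between any two distinct elements. (The apparent intermediate possibility, that $R_n(a_i, a_{i+1})$ holds for some fixed finite $n \geq 1$, is incompatible with indiscernibility: alternation counts are additive over $a_i < a_{i+1} < a_{i+2}$, so $R_n(a_i,a_{i+1})$ and $R_n(a_{i+1},a_{i+2})$ would give $R_{2n}(a_i,a_{i+2})$, forcing $2n=n$ and hence $n=0$.)

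In case (II) the class comparison of $a_i$ with $[c]$ is monotone in $i$, so $\{i : [a_i] < [c]\}$ is an initial segment and $\{i : [a_i] > [c]\}$ a final segment of $I$. On the final segment, once $i$ is large enough that an entire infinite gap between consecutive $a_j$ lies between $c$ and $a_i$, there is no finite $R_n(c, a_i)$ and, by Lemma~\ref{classes}, both $a_i$ and every combination $a_i + qc$ exceed all of $\Span_{\QQ}(c)$; thus all three data are constant and, by quantifier elimination, $\tp(a_i/c)$ is constant there. The same argument handles the initial segment, and only finitely many indices (those with $[a_i] = [c]$ or at finite $R_n$-distance from $c$) remain, each contributing a single convex piece. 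This yields a partition into at most two infinite convex sets plus finitely many singletons, as required.

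In case (I) everything hinges on the position of $[c]$. If $[c] \neq K$, then $\Span_{\QQ}(c) \cap K = \emptyset$, so every $a_i$ makes the same cut in $\Span_{\QQ}(c)$ and has the same constant class- and $R_n$-relation to $c$; hence $\tp(a_i/c)$ is constant and there is a single infinite piece. If $[c] = K$, then $P(a_i)$ and all the $R_n$-data are constant, and the type of $a_i$ over $c$ is determined purely by the order of $a_i$ against $\Span_{\QQ}(c)$, i.e.\ it reduces to the type in the divisible ordered abelian group reduct. Since that reduct is o-minimal, hence dp-minimal (the corollary to Theorem~\ref{weakomin}), Fact~\ref{ind} applied there splits the sequence into at most two infinite convex pieces plus finitely many points. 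In every case Fact~\ref{ind}(3) holds, so $T$ is dp-minimal.

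The step I expect to be the main obstacle is the same-class case $[c]=K$ of case (I): one must check that within a single $[\,\cdot\,]$-class the only data that can vary is the order against $\Span_{\QQ}(c)$, so the problem genuinely collapses to the o-minimal setting and the two-infinite-pieces bound may be imported, and one must keep careful track, across all cases, that no more than two infinite pieces ever survive. A secondary point needing care is the reduction to singleton sequences: for a sequence of tuples one analyzes each coordinate together with the finitely many relevant $\QQ$-combinations with $c$, and must verify that the single parameter $c$ cannot manufacture more than two infinite pieces even after the coordinates are combined.
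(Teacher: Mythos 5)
Your overall strategy (verifying the indiscernible-splitting criterion of Fact~\ref{ind} directly) is genuinely different from the paper's, which never touches splitting: the paper uses quantifier elimination to show that any conjunction of conditions of the form $P(t)$, $\neg P(t)$, $\neg R_n(t_0,t_1)$ on terms depending nontrivially on $x$ is satisfiable over \emph{any} parameters (one can place $x$ in an arbitrarily large $[\cdot]$-class, by Lemma~\ref{classes}), so such conjuncts can never occur in a $k$-inconsistent row; the formulas in any would-be inp-pattern therefore define finite unions of convex sets, the convexity argument of Theorem~\ref{weakomin} shows $T$ is inp-minimal, and NIP (the preceding corollary) upgrades inp-minimality to dp-minimality. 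Unfortunately your route, as written, has a genuine gap exactly at the point you flag as the main obstacle. In case (I) with $[c]=[a_i]$, the claim that ``$P(a_i)$ and all the $R_n$-data are constant'' and that the type over $c$ reduces to the order reduct is false: Lemma~\ref{classes}(2) only gives $a_i - rc \in [a_i]_{\leq}$, so the $\QQ$-combinations $a_i - rc$ can drop into arbitrary \emph{lower} classes, and their $P$- and $R_n$-values are extra data over $c$ invisible to the cut in $\Span_{\QQ}(c)$. Concretely, in $\RR((x^{\RR}))$ take $a_i = 1 + r_i x$ with $r_i$ a rapidly increasing sequence of reals (indiscernible by quantifier elimination) and $c = 1 + r_2 x + x^{3/2}$: then $P(a_i - c)$ holds if and only if $i \neq 2$. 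The exceptional piece here is a singleton, so Fact~\ref{ind} is not violated, but it shows the problem does not collapse to the o-minimal reduct, and the two-infinite-pieces bound cannot simply be imported; one must bound, uniformly over the infinitely many combinations $a_i - rc$ ($r \in \QQ$) and all $R_n$-relations among them, the total number of convex pieces and the number of infinite ones. That uniform bound is precisely the hard core of the statement, and your proposal supplies no mechanism for it -- whereas the paper's ``push $x$ into a huge class'' trick, applied to the free variable of the pattern, is exactly what neutralizes this data.

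A second gap is the tuple reduction. Fact~\ref{ind}(2)/(3) quantifies over indiscernible sequences of \emph{tuples} $\ob{a}_i$, and the implication you need, (2)$\Rightarrow$(1), is applied in the paper's proof to the concatenated sequence $\ob{c}_i = \ob{a}_i{}^\frown\ob{b}_i$; so verifying the splitting property only for sequences of singletons does not establish dp-minimality. Your closing suggestion to ``analyze each coordinate together with the finitely many relevant $\QQ$-combinations'' is not routine: the quantifier-free type of $\ob{a}_i$ over $c$ involves \emph{infinitely} many terms $s(\ob{a}_i)+qc$, and nothing in your case analysis (which leans on the shape trichotomy for a single increasing sequence) controls how the pieces coming from different coordinates and combinations interact. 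On the positive side, your observation that the intermediate shape $R_n(a_i,a_{i+1})$ for a fixed $n \geq 1$ is incompatible with indiscernibility -- by additivity of alternation counts, $R_n(a_i,a_{i+1})$ and $R_n(a_{i+1},a_{i+2})$ force $R_{2n}(a_i,a_{i+2})$, while indiscernibility demands $R_n(a_i,a_{i+2})$ -- is correct, and sharpens the trichotomy used in the paper's proof that $T$ does not have the independence property; but it does not repair either gap above.
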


\begin{proof}
Suppose that $t(x, \overline{y})$ is any term in $T$; then we can write $t(x, \overline{y}) = qx + s(\overline{y})$
for some $q \in \QQ$ and some term $s(\overline{y})$.  If $\overline{a}, \overline{b} \in M \models T$, then there is
some $c \in P(M)$ such that $[c] > [s(\overline{a})]$ and $[c] > [s(\overline{b})]$.  By Lemma~\ref{classes},
as long as $q \neq 0$, $M \models P(t(c,\overline{a})) \wedge P(t(c, \overline{b}))$.  So for any $t(x, \overline{y})$
that depends nontrivially on $x$,
$$T \vdash \forall \overline{y}_0 \forall \overline{y}_1 \exists x
\left[P(t(x, \overline{y}_0)) \wedge P(t(x, \overline{y}_1)) \right].$$  The same argument works with $\neg P$ or $\neg R_n$
in place of $P$, and for finite conjunctions of such formulae.

This means that if $\left\{\varphi(x; \overline{a}_i) : i < \omega \right\}$ is a $k$-inconsistent sequence
of formulae in $T$, each of which is a conjunction of atomic formulae and negated atomic formulae, then
each $\varphi(x; \overline{a}_i)$ can be assumed to be a conjunction of formulae of the following
three forms:
 $t_0(x; \overline{a}_i) < t_1(x; \overline{a}_i)$, $\neg \left(t_0(x; \overline{a}_i) < t_1(x; \overline{a}_i) \right)$,
 or $R_n(t_0(x; \overline{a}_i), t_1(x; \overline{a}_i))$.  So each $\varphi(x; \overline{a}_i)$
is a finite union of convex sets.  From here we can argue as in the  weakly o-minimal case to show that $T$
is inp-minimal.\end{proof}

Thus we have a non-weakly o-minimal theory $T$ extending the theory of divisible ordered Abelian groups which is dp-minimal.
Notice though that Corollary~\ref{open} indicates that infinite definable subsets in models of $T$ are  not too complicated, namely they
must have interior.  Pierre Simon~\cite{simon} has recently shown that this must be the case, specifically an infinite
definable subset of a dp-minimal divisible ordered Abelian group must have interior.

\section{dp-minimality of the p-adics}

In this section our main goal is to show that
if $\QQ_p$ is a p-adic field then $Th(\QQ_p)$ is dp-minimal.
The main technical portion of our proof is written in a more general context than that
of the p-adics in the hopes that our proof of the dp-minimality of the p-adics may be generalized
 to other Henselian valued fields.  We begin by establishing some notation and fixing the context
 for our work.

In what follows, $\NN$ denotes the positive integers.

Let $K$ be a Henselian valued field, considered as a one-sorted
structure in the language $\mathcal L_\text{vf} = \mathcal
L_\text{ring} \cup \{ v(x) \leq v(y) \}$. Write $v: K^\times \to
\Gamma$ for the valuation and value group. We assume that $K$ is
elementarily equivalent to a valued field with value group $\ZZ$; for
example, $K$ is an elementary extension of a $p$-adic field, which is
the application below.

Let $R$ be the valuation ring of $K$ and
let $\mm$ be its maximal ideal.

Fix $k\in \NN$. Note that $1+\mm^k$ is a definable subgroup of
$K^\times$. Adapting notation from Hrushosvki,
we write $RV_k(K)$ for the quotient $K^\times / (1+\mm^k)$.
Let $\pi_k: K^\times\to RV_k(K)$ be the quotient map.
% If
% $x=\sum_{i\geq v(x)} a_i p^i$ and $y=\sum_{i\geq v(y)} b_i p^i$, then
% it is easy to check that $\pi_k(x)=\pi_k(y)$ if and only if
% $v(x)=v(y)$ and $a_i=b_i$ for $i=v(x),\dots,v(x)+(k-1)$.
For notational convenience, we
define $\pi_k(0)$ to be a new element $\infty$ which we adjoin to
$RV_k(K)$.

% The following proposition is an immediate consequence of the pigeon-hole principle.
% \begin{prop}
%   If $x_1,\dots,x_{p^k}\in \QQ_p^\times$ are distinct, and $v(x_i)=
%   v(x_j)$ for all $i,j$, then there are distinct $x_i$, $x_j$ such
%   that $\pi_k(x_i)=\pi_k(x_j)$. \label{prop:cones}
% \end{prop}

An easy calculation establishes the following proposition.
\begin{prop}
  For all $z\in K$ and all $x,y\in K\setminus\{z\}$,
  $\pi_k(x-z) = \pi_k(y-z)$ if and only if
  $v(x-y)\geq v(y-z)+k$. \label{prop-pi}
\end{prop}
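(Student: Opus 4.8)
The plan is to unwind the definition of the quotient map $\pi_k$ and reduce the claimed equivalence to a single valuation computation. Since $x,y\neq z$, both $x-z$ and $y-z$ lie in $K^\times$, so the assertion $\pi_k(x-z)=\pi_k(y-z)$ is literally the statement that $x-z$ and $y-z$ determine the same coset of the subgroup $1+\mm^k$ in $K^\times$, i.e.\ that $(x-z)(y-z)^{-1}\in 1+\mm^k$. The entire proof then consists of rewriting this membership as the inequality $v(x-y)\geq v(y-z)+k$.

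First I would record the valuation-theoretic description of $1+\mm^k$: an element $u\in K^\times$ lies in $1+\mm^k$ if and only if $v(u-1)\geq k$. This rests on the identification $\mm^k=\{w\in K: v(w)\geq k\}$, which holds because the value group, being elementarily equivalent to $\ZZ$, has a least positive element, so $K$ carries a uniformizer $\varpi$ with $v(\varpi)=1$; then $\mm^k=\varpi^k R$ is exactly the set of elements of valuation at least $k$. Next I would apply this with $u=(x-z)(y-z)^{-1}$. Here
$$u-1=\frac{(x-z)-(y-z)}{y-z}=\frac{x-y}{y-z},$$
so $v(u-1)=v(x-y)-v(y-z)$. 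Hence $u\in 1+\mm^k$ if and only if $v(x-y)-v(y-z)\geq k$, that is, $v(x-y)\geq v(y-z)+k$, which is precisely the right-hand side of the proposition. Chaining these equivalences proves the claim; note that the degenerate case $x=y$ is automatic, since then $u-1=0$ has infinite valuation and the inequality holds trivially.

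The computation is entirely routine, so I expect no genuine obstacle. The only point deserving a word of justification is the identification $\mm^k=\{w:v(w)\geq k\}$ in the possibly nonstandard setting, which is the sole place the hypothesis that the value group is elementarily equivalent to $\ZZ$---and so admits a uniformizer---is used. Because the argument is purely multiplicative, no case analysis on the signs or relative sizes of the valuations is required.
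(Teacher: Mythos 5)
Your proof is correct and is exactly the ``easy calculation'' the paper alludes to without writing out: reduce $\pi_k(x-z)=\pi_k(y-z)$ to $(x-z)(y-z)^{-1}\in 1+\mm^k$ and compute $v\bigl((x-z)(y-z)^{-1}-1\bigr)=v(x-y)-v(y-z)$. You also correctly isolate the one nontrivial point, the identification $\mm^k=\{w: v(w)\geq k\}$ via a uniformizer, which is precisely where the paper's remark about needing a model with value group $\ZZ$ comes in.
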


We note that to establish this simple proposition we need to use the fact that our theory has
a model whose value group if $\ZZ$.

We know isolate a special kind of formula whose definition is intended to mimic the form
of formulas defining cells (as in \cite{padiccell}) in the p-adic fields.

\begin{definition}
  The formula $\phi(x;y_0,\dots,y_l)$ is \emph{cell-like} if there is $k\in
  \NN$ such that $\phi(x;y_0\vec{y})\andd \pi_k(x-y_0) = \pi_k(x'-y'_0)$
  implies $\phi(x';y_0'\vec{y})$. We call $y_0$ the \emph{center} of
  $\phi(x;y_0,\dots,y_n)$.
\end{definition}

Let $K$ be as above, and assume further that the residue field
$\overline{K}$ is finite and $K$ is $(2^\omega)^+$-saturated.
We show that there is no ICT
pattern in $K$ for which each formula is cell-like.
On the contrary, assume that in $K$ we
have an ICT pattern as follows:
\begin{gather*}
C_{1,1}, C_{1,2}, C_{1,3},\dots\\
C_{2,1}, C_{2,2}, C_{2,3},\dots
\end{gather*}
where $C_{i,j} = \{ x\in K : \phi_i(x,d_{i,j})\}$ and
$\phi_i(x;\vec{y})$ is cell-like. Write $c_{i,j}$ for the center
of $C_{i,j}$ (i.e.\ $c_{i,j}=(d_{i,j})_0$).  We may assume that $\langle
d_{1,j} \rangle_j$ and $\langle d_{2,j} \rangle_j$ are indexed by the
reals and are
mutually indiscernible sequences. Choose $k$ large enough so that
it witnesses that both $\phi_1$ and $\phi_2$ are cell-like.
We write $\pi$ as an abbreviation for $\pi_k$.

\begin{prop}
   $v(c_{1,1}-c_{2,1})=v(c_{1,2}-c_{2,1})$ and  $v(c_{2,1}-c_{1,1})=v(c_{2,2}-c_{1,1})$. \label{prop:branch}
 \end{prop}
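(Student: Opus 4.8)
The plan is to prove only the first equality $v(c_{1,1}-c_{2,1}) = v(c_{1,2}-c_{2,1})$; the second then follows by running the identical argument with the two rows of the ICT pattern interchanged, since swapping $1$ and $2$ in the first subscript carries the first equality to the second. Throughout I abbreviate $b = c_{2,1}$ and $a_j = c_{1,j}$, and write $\gamma_j = v(a_j - b)$.

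First I would extract monotonicity from mutual indiscernibility. Since $\langle d_{1,j}\rangle_j$ is indiscernible over $\bigcup_j d_{2,j}$ and $b$ is a coordinate of $d_{2,1}$, the sequence $\langle a_j\rangle_j$ is indiscernible over $b$. Hence, for the formula $v(u - b) < v(w - b)$ with parameter $b$, its truth value on an increasing pair $(a_j,a_{j'})$ with $j<j'$ is independent of $j,j'$, and likewise for $=$ and $>$. So $\langle \gamma_j\rangle_j$ is either constant, strictly increasing, or strictly decreasing. In the constant case $\gamma_1=\gamma_2$ is exactly what we want, and it remains to rule out the two strictly monotone cases.

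So suppose $\langle\gamma_j\rangle_j$ is strictly monotone; I will contradict the ICT pattern. The main tool is that each cell is a union of $\pi$-fibers about its center: by cell-likeness and Proposition~\ref{prop-pi}, membership $x\in C_{1,j}$ depends only on $\pi(x-a_j)$, and each fiber $\{x : \pi(x-a_j)=\rho\}$ is an open ball of radius $v(\rho)+k$; since $\overline{K}$ is finite, at each fixed distance from $a_j$ there are only finitely many such fibers. In the strictly monotone case the ultrametric inequality confines the centers to a single branch of the valuation tree: for $i<j$ one has $v(a_i-a_j)=\min(\gamma_i,\gamma_j)$, so as $j$ grows the $a_j$ recede from (or approach) $b$ along one branch, with $b$ fixed on a single side. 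I would then use the ICT pattern to produce a witness $x$ lying in $C_{1,1}\cap C_{2,1}$ and in no other cell, and a witness $x'$ lying in $C_{1,2}\cap C_{2,1}$ and in no other cell; both lie in $C_{2,1}$, while $x\in C_{1,1}\setminus C_{1,2}$ and $x'\in C_{1,2}\setminus C_{1,1}$. Computing $v(x-a_1), v(x-a_2), v(x-b)$ and the primed analogues from the branch picture, the aim is to show that the monotone nesting of the balls comprising $C_{1,1}$ and $C_{1,2}$, together with the common constraint $\pi(\,\cdot\, - b)\in S$ forced by membership in $C_{2,1}$, makes one of the required exclusions impossible --- driving $x'$ into $C_{1,1}$ or $x$ into $C_{1,2}$ --- which is the contradiction.

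The hard part will be this last step: converting the tree picture into clean bookkeeping on the finitely many $\pi$-fibers and their radii, tracking how $\pi(\,\cdot\,-a_j)$ and $\pi(\,\cdot\,-b)$ interact under the ultrametric as $v(a_j-b)$ drifts monotonically. I expect both the finiteness of $\overline{K}$ (bounding the number of fibers per radius) and the discreteness of $\Gamma$ (entering through Proposition~\ref{prop-pi}, which needs value group $\ZZ$) to be essential precisely here; the indiscernibility-to-monotonicity reduction and the row-symmetry are only the easy scaffolding.
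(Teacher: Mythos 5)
Your scaffolding agrees with the paper's proof --- the row-swapping symmetry, and the use of indiscernibility of $\langle c_{1,j}\rangle_j$ over $c_{2,1}$ to reduce to ruling out the case where $\gamma_j = v(c_{1,j}-c_{2,1})$ is strictly monotone --- but the entire core of the argument is the part you explicitly defer (``the hard part will be this last step''), and as sketched it would not go through. Strict monotonicity of $\langle \gamma_j\rangle_j$ alone is too weak: the gaps $\gamma_{j+1}-\gamma_j$ could a priori be of the same order as the fixed constant $k$ from Proposition~\ref{prop-pi}, and then the $\pi_k$-fibers of the two rows can interleave in ways perfectly consistent with the ICT pattern, so no amount of ``bookkeeping on fibers'' closes the argument. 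The missing first move is that the $\gamma_j$ must lie in \emph{distinct Archimedean classes}: indiscernibility makes the truth value of $\gamma_{j'}-\gamma_j \geq n$ constant over pairs $j<j'$, and since the index set is uncountable (or since gaps add along $j<j'<j''$), every gap exceeds every $n\in\NN$. This is where discreteness of the value group is really used, and it is what makes the additive slack $+k$ in Proposition~\ref{prop-pi} negligible. Note also that you flag finiteness of the residue field as essential here; it is not used in this proposition at all --- the pigeonhole on residues enters only in the next statement, Proposition~\ref{prop:y}.

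Your endgame also aims at the wrong target. The paper takes a \emph{single} witness $a \in C_{1,1}\cap C_{2,1}\setminus (C_{1,2}\cup C_{2,2})$, and the contradiction is not a comparison of two witnesses $x, x'$ against nested balls; it is a transfer of membership from the witness to the \emph{center} $c_{2,1}$. Concretely: from $\gamma_j = v(c_{1,j}-c_{1,j+1})$ and indiscernibility of row $2$ over row $1$ one gets $v(c_{1,j}-c_{2,2}) = \gamma_j$, whence $v(c_{2,2}-c_{2,1}) \geq \gamma_j$ for all $j$, and by the Archimedean separation $v(c_{2,2}-c_{2,1}) \gg \gamma_j$. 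Then $a \notin C_{2,2}$ forces $\pi_k(a-c_{2,1}) \neq \pi_k(a-c_{2,2})$, i.e.\ $v(a-c_{2,1}) > v(c_{2,2}-c_{2,1})-k \gg \gamma_j$, so $v(a - c_{2,1}) \geq \gamma_j + k$ and Proposition~\ref{prop-pi} gives $\pi_k(a-c_{1,j}) = \pi_k(c_{2,1}-c_{1,j})$ for every $j$. Cell-likeness then puts $c_{2,1} \in C_{1,1}$ and $c_{2,1}\notin C_{1,2}$, contradicting indiscernibility of $\langle d_{1,j}\rangle_j$ over $c_{2,1}$. Your plan never extracts the crucial lower bound on $v(a-c_{2,1})$ from the exclusion $a\notin C_{2,2}$ (you use $C_{2,2}$ only passively, via ``in no other cell''), and without that bound, and without the Archimedean separation, the final contradiction you hope for --- ``driving $x'$ into $C_{1,1}$ or $x$ into $C_{1,2}$'' --- has no mechanism behind it.
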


 \begin{proof}
   By symmetry, it suffices to establish the first equality.
   Assume that $v(c_{1,1}-c_{2,1})\neq v(c_{1,2}-c_{2,1})$; without
   loss of generality,
   $v(c_{1,1}-c_{2,1})<v(c_{1,2}-c_{2,1})$. By indiscernibility,
   $\langle v(c_{1,j}-c_{2,1}) \rangle_j$ is an increasing sequence in
   $\Gamma$.
   Because $\langle c_{1,j} \rangle_j$ is uncountable and
   indiscernible over $c_{2,1}$, we see that each
   $v(c_{1,j}-c_{2,1})$ must be in a
   distinct Archimedean class. In particular, for $j\in \ZZ$ we have
   \begin{equation}
     \cdots \ll v(c_{1,-1}-c_{2,1}) \ll v(c_{1,0}-c_{2,1}) \ll v(c_{1,1}-c_{2,1})\ll \cdots. \label{eq:6}
   \end{equation}

   Note that by the ultrametric inequality, we know
   $v(c_{1,j}-c_{2,1})=v(c_{1,j}-c_{1,j+1})$, for all $j\in \ZZ$.
   Since, $\langle c_{2,j} \rangle_j$ is indiscernible over
   $\{c_{1,j}\}_j$, we have that
   $v(c_{1,j}-c_{2,2})=v(c_{1,j}-c_{1,j+1})=v(c_{1,j}-c_{2,1})$. Thus,
   for each $j\in \ZZ$, $v(c_{2,2}-c_{2,1}) \geq v(c_{1,j}-c_{2,1})$.
   (Otherwise, by the ultrametric inequality, $v(c_{1,j}-c_{2,2})=
   v(c_{2,2}-c_{2,1})$; hence, from the indiscernibility of $\langle c_{1,j}
   \rangle_j$ over $\{ c_{2,j} \}_j$, we have $v(c_{1,1}-c_{2,2})=
   v(c_{1,2}-c_{2,2})$, which is impossible.) A fortiori, for each
   $j\in \ZZ$, $v(c_{2,2}-c_{2,1}) \gg v(c_{1,j}-c_{2,1})$.

   Choose $a\in C_{2,1} \cap (K\setminus C_{2,2}) \cap C_{1,1} \cap
   (K\setminus C_{1,2})$.
   Note that $\pi(a-c_{2,1})\neq \pi(a-c_{2,2})$, i.e.   $v(c_{2,2}-c_{2,1})<
   v(a-c_{2,1})+k$. Hence, $v(a-c_{2,1})\gg v(c_{1,j}-c_{2,1})$ for
   all $j\in \ZZ$. However, this implies that
   $\pi(a-c_{1,j})=\pi(c_{2,1}-c_{1,j})$. In particular, from the
   choice of $a$, we have $c_{2,1}\in C_{1,1}$ and $c_{2,1}\notin
   C_{1,2}$. However, this contradicts the indiscernibility of
   $\langle d_{1,j}\rangle_j$ over $c_{2,1}$.
 \end{proof}

\begin{prop}
$v(c_{1,1}-c_{1,2})\gg v(c_{1,1}-c_{2,1})$ and $v(c_{1,1}-c_{1,2})\gg v(c_{1,1}-c_{2,1})$.\label{prop:y}
\end{prop}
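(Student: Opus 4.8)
The plan is to prove both inequalities by the same argument, exploiting the symmetry between the two sequences, so I concentrate on $v(c_{1,1}-c_{1,2})\gg v(c_{1,1}-c_{2,1})$. First I would upgrade Proposition~\ref{prop:branch} using mutual indiscernibility: the formula $v(x_0-c_{2,1})=v(x_1-c_{2,1})$, which Proposition~\ref{prop:branch} asserts of $(c_{1,1},c_{1,2})$, is a formula over the parameter $c_{2,1}$, and since $\langle c_{1,j}\rangle_j$ is indiscernible over $c_{2,1}$ it then holds of every increasing pair. Hence $v(c_{1,j}-c_{2,1})$ takes a single value $\gamma:=v(c_{1,1}-c_{2,1})$ for all $j$. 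In particular $c_{1,j}\neq c_{2,1}$, so Proposition~\ref{prop-pi} is available with $z=c_{2,1}$.

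The engine of the proof is the finiteness of the residue field. Fix $m\in\NN$ and consider the elements $\pi_m(c_{1,j}-c_{2,1})$ of $RV_m(K)$. Each has $v$-image $\gamma$, so they all lie in the fibre over $\gamma$ of the induced map $v\colon RV_m(K)\to\Gamma$; because $\overline{K}$ is finite this fibre is finite (of size $(q-1)q^{m-1}$ with $q=|\overline{K}|$, as one sees by scaling the value-zero fibre by an element of value $\gamma$). Since the index set is uncountable, the pigeonhole principle produces $j_0<j_1$ with $\pi_m(c_{1,j_0}-c_{2,1})=\pi_m(c_{1,j_1}-c_{2,1})$, and then Proposition~\ref{prop-pi} gives $v(c_{1,j_0}-c_{1,j_1})\geq v(c_{1,j_1}-c_{2,1})+m=\gamma+m$.

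It remains to transport this to the designated pair and let $m$ vary. The condition just obtained is the formula $\chi_m(x_0,x_1)\equiv\big[v(x_0-x_1)\geq v(x_0-c_{2,1})+m\big]$, again a formula over $c_{2,1}$, holding of the increasing pair $(c_{1,j_0},c_{1,j_1})$; by indiscernibility over $c_{2,1}$ it therefore holds of every increasing pair, in particular $v(c_{1,1}-c_{1,2})\geq\gamma+m$. As $m$ ranges over $\NN$ this yields $v(c_{1,1}-c_{1,2})\geq v(c_{1,1}-c_{2,1})+m$ for all $m\in\NN$, which is the assertion $v(c_{1,1}-c_{1,2})\gg v(c_{1,1}-c_{2,1})$. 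The companion inequality (the two sequences interchanged) follows verbatim, now using the second equality $v(c_{2,1}-c_{1,1})=v(c_{2,2}-c_{1,1})$ of Proposition~\ref{prop:branch} and the indiscernibility of $\langle c_{2,j}\rangle_j$ over $c_{1,1}$.

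I expect the only delicate point to be the passage from ``some pair $(j_0,j_1)$'' to ``the pair $(1,2)$'': this is exactly where indiscernibility over $c_{2,1}$ is indispensable, and it is also the mechanism that lets the finite-residue-field count, performed separately for each $m$, accumulate into the single infinitary statement $\gg$. Everything else is the ultrametric bookkeeping already packaged into Proposition~\ref{prop-pi}, so no further calculation in the value group should be needed.
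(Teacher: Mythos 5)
Your proposal is correct and follows essentially the same route as the paper's own proof: use indiscernibility over $c_{2,1}$ plus Proposition~\ref{prop:branch} to equalize the valuations $v(c_{1,j}-c_{2,1})$, then for each fixed $m$ apply pigeonhole in the finite fibre of $RV_m(K)$ over that common value to find a pair with equal $\pi_m$-images, and finally use indiscernibility again to transfer this to the pair $(c_{1,1},c_{1,2})$, which via Proposition~\ref{prop-pi} gives $v(c_{1,1}-c_{1,2})\geq v(c_{1,1}-c_{2,1})+m$ for every $m$. Your write-up is merely more explicit than the paper's (the fibre count, the definable formula $\chi_m$, and the quantifier order over $m$), and your appeal to uncountability of the index set is harmless overkill, since an infinite index set already suffices for the pigeonhole step.
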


\begin{proof}
Again, by symmetry, it suffices to prove the first inequality.
Via indiscernibility, Proposition~\ref{prop:branch} yields
 $v(c_{1,j}-c_{2,1})=v(c_{1,1}-c_{2,1})$ for all $j\in \NN$. Since the
 residue field is finite, a pigeon-hole argument shows that for each
 $k'\in \NN$ there are distinct
 $j,j' \in \NN$ such that
 $\pi_{k'}(c_{1,j}-c_{2,1})=\pi_{k'}(c_{1,j'}-c_{2,1})$. In fact, by
 indiscernibility, we have $\pi_{k'}(c_{1,1}-c_{2,1})=\pi_{k'}(c_{1,2} -
 c_{2,1})$ for all $k'$. In other words, $v(c_{1,1}-c_{1,2})\gg
 v(c_{1,1}-c_{2,1})$, as desired.
\end{proof}

 Choose $a\in C_{1,1} \cap (K\setminus C_{1,2}) \cap C_{2,1} \cap
 (K\setminus C_{2,2})$. Then, since $\pi(a-c_{1,1})\neq
 \pi(a-c_{1,2})$, we know  $v(c_{1,2}-c_{1,1})<
 v(a-c_{1,1})+k$. By Propositions~\ref{prop:branch} and \ref{prop:y},
 $v(a-c_{1,1})\gg  v(c_{1,1}-c_{2,1})=v(c_{1,1}-c_{2,2})$. Hence,
 $\pi(a-c_{2,1})=\pi(c_{1,1}-c_{2,1})$ and
 $\pi(a-c_{2,2})=\pi(c_{1,1}-c_{2,2})$. By the choice of $a$, we see
 $c_{1,1}\in C_{2,1}$ and $c_{1,1}\notin C_{2,2}$, contradicting the
 indiscernibility of $\langle d_{2,j} \rangle_j$ over $c_{1,1}$.
We have thus established:

\begin{thm}\label{celllike} If $K$ is a Henselian valued field elementary equivalent to a valued field
with value group $\ZZ$ and $K$ has finite residue field then $K$ has no ICT
pattern consisting of cell-like formulae.
\end{thm}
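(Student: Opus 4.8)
The plan is to argue by contradiction, exploiting the fact that membership in a cell-like set is controlled entirely by the $\pi_k$-class of the displacement from the center. So suppose $K$ carried an ICT pattern consisting of cell-like formulae. By Fact~\ref{indisc} together with compactness and the saturation hypothesis, I may assume the two witnessing sequences $\langle d_{1,j}\rangle_j$ and $\langle d_{2,j}\rangle_j$ are mutually indiscernible and indexed by $\RR$; write $c_{i,j}$ for the center of $C_{i,j}=\phi_i(K,d_{i,j})$, fix $k$ witnessing cell-likeness of both $\phi_1$ and $\phi_2$, and abbreviate $\pi_k$ by $\pi$. The whole argument then reduces to understanding the valuations $v(c_{i,j}-c_{i',j'})$ of differences of centers: by Proposition~\ref{prop-pi} the $\pi$-class of $x-c_{i,j}$, and hence by cell-likeness whether $x\in C_{i,j}$, is a matter of comparing such valuations.

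The first structural step is to show that the cross-sequence valuation from one sequence to a single fixed center of the other is constant, i.e.\ $v(c_{1,j}-c_{2,1})=v(c_{1,1}-c_{2,1})$ for all $j$ (and symmetrically). I would prove this by contradiction: if these valuations were not all equal, indiscernibility would force them into a strictly monotone sequence, and uncountability together with saturation would push successive terms into distinct Archimedean classes. Choosing a point $a$ in the ICT-guaranteed intersection $C_{2,1}\cap(K\setminus C_{2,2})\cap C_{1,1}\cap(K\setminus C_{1,2})$, the condition $\pi(a-c_{2,1})\neq\pi(a-c_{2,2})$ forces $a$ to sit infinitely far out relative to every $v(c_{1,j}-c_{2,1})$, whence $\pi(a-c_{1,j})=\pi(c_{2,1}-c_{1,j})$; this transfers the $C_{1,1},C_{1,2}$-membership of $a$ to $c_{2,1}$, contradicting the indiscernibility of $\langle d_{1,j}\rangle_j$ over $c_{2,1}$.

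The second structural step is that within-sequence differences dominate cross-sequence differences in valuation: $v(c_{1,1}-c_{1,2})\gg v(c_{1,1}-c_{2,1})$. Here the finiteness of the residue field is essential. From the first step $v(c_{1,j}-c_{2,1})$ is constant, so for each $k'$ the elements $\pi_{k'}(c_{1,j}-c_{2,1})$ range over a finite set; a pigeonhole argument over the infinite indiscernible sequence yields a coincidence, which indiscernibility upgrades to $\pi_{k'}(c_{1,1}-c_{2,1})=\pi_{k'}(c_{1,2}-c_{2,1})$ for \emph{every} $k'$. Since this holds for all $k'$, the difference $c_{1,1}-c_{1,2}$ must have valuation infinitely larger than $v(c_{1,1}-c_{2,1})$.

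With both structural facts in hand the contradiction is quick. Choosing $a\in C_{1,1}\cap(K\setminus C_{1,2})\cap C_{2,1}\cap(K\setminus C_{2,2})$, cell-likeness together with $a\in C_{1,1}$ and $a\notin C_{1,2}$ gives $\pi(a-c_{1,1})\neq\pi(a-c_{1,2})$, so $v(c_{1,2}-c_{1,1})<v(a-c_{1,1})+k$; combined with the two structural facts this forces $v(a-c_{1,1})\gg v(c_{1,1}-c_{2,1})=v(c_{1,1}-c_{2,2})$, so that $a$ is $\pi$-indistinguishable from $c_{1,1}$ relative to both centers $c_{2,1},c_{2,2}$. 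Cell-likeness then hands the $C_{2,1},C_{2,2}$-membership of $a$ to $c_{1,1}$, namely $c_{1,1}\in C_{2,1}$ and $c_{1,1}\notin C_{2,2}$, contradicting the indiscernibility of $\langle d_{2,j}\rangle_j$ over $c_{1,1}$. I expect the main obstacle to be the first structural step, where one must simultaneously exploit uncountable cofinality (to separate the Archimedean classes) and the ultrametric inequality to pin the cross-sequence valuations to a single value; the residue-field-finiteness input in the second step is comparatively routine pigeonholing once the first is secured.
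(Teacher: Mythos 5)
Your proposal is correct and follows essentially the same route as the paper: your two structural steps are precisely Propositions~\ref{prop:branch} and~\ref{prop:y} (constancy of cross-sequence valuations via uncountability and separation into distinct Archimedean classes, then domination of within-sequence differences via a residue-field pigeonhole upgraded by indiscernibility), and your closing argument is the paper's final transfer of the membership pattern of $a$ to $c_{1,1}$, contradicting indiscernibility of $\langle d_{2,j}\rangle_j$ over $c_{1,1}$. The one point your sketch glosses---that in the first step one must first establish $v(c_{2,2}-c_{2,1})\gg v(c_{1,j}-c_{2,1})$ via the ultrametric inequality before $\pi(a-c_{2,1})\neq\pi(a-c_{2,2})$ can push $a$ far out---is exactly the parenthetical argument in the paper's proof of Proposition~\ref{prop:branch}, and you correctly flag it as the delicate point.
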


We are now ready to establish the dp-minimality of $\QQ_p$.  We recall some essential
facts on the p-adic fields necessary for our proofs.

Fix a prime $p$, and let $\QQ_p$ be the $p$-adic field, considered as
a one-sorted structure in the language of rings. Write $v:
\QQ_p^\times \to \ZZ$ for the $p$-adic valuation. Recall that the
relation $v(x)\leq v(y)$ is definable in the language of rings
\cite[Section 2]{padiccell}. Let $K$ be a sufficiently saturated elementary
extension of $\QQ_p$; all of the hypotheses of Theorem~\ref{celllike} hold of $K$. Thus, there is no cell-like ICT pattern in
$K$. To show $\QQ_p$ is dp-minimal, it suffice to show that if there is an ICT
pattern in $K$, then there is a cell-like ICT pattern in $K$.

%Write $\mm$ for the maximal ideal of $\ZZ_p$.

% Fix $k\in \NN$. Note that $1+\mm^k$ is a definable subgroup of
% $\QQ_p^\times$. Adapting notation from Hrushosvki,
% we write $RV_k(\QQ_p)$ for the quotient $\QQ_p^\times / (1+\mm^k)$.
% Let $\pi_k: \QQ_p^\times\to RV_k(\QQ_p)$ be the quotient map.  If
% $x=\sum_{i\geq v(x)} a_i p^i$ and $y=\sum_{i\geq v(y)} b_i p^i$, then
% it is easy to check that $\pi_k(x)=\pi_k(y)$ if and only if
% $v(x)=v(y)$ and $a_i=b_i$ for $i=v(x),\dots,v(x)+(k-1)$.
% For notational convenience, we
% define $\pi_k(0)$ to be a new element $\infty$ which we adjoin to
% $RV_k(\QQ_p)$. The following proposition is an immediate consequence of the pigeon-hole principle.
% \begin{prop}
%   If $x_1,\dots,x_{p^k}\in \QQ_p^\times$ are distinct, and $v(x_i)=
%   v(x_j)$ for all $i,j$, then there are distinct $x_i$, $x_j$ such
%   that $\pi_k(x_i)=\pi_k(x_j)$. \label{prop:cones}
% \end{prop}

\begin{definition}\mbox{}

  \begin{enumerate}
  \item An {\em annulus} in $K$ is a set of the form
    \begin{equation*}
      \Ann(c,\gamma,\delta)=\{ x\in K: \gamma \geq v(x- c) \geq \delta \},
    \end{equation*}
    where $\gamma\in \Gamma\cup \{ \infty\}$, $\delta\in \Gamma \cup \{
    -\infty \}$ and $c \in K$.
  \item Let $P_n$ be set of $n^\text{th}$ powers in $K$.
    A {\em power coset} in $K$ is a set of the form
    \begin{equation*}
      \Pow_{n,\lambda}(c)= \{ x\in K : x-c \in \lambda P_n\},
    \end{equation*}
    where, $n\in \NN$, $\lambda\in \NN \cup \{0\}$, and $c\in K$.
  \item  A {\em
      cell} in $K$ is a non-empty set of the form
    \begin{equation*}
      \Cell_{n,\lambda}(c,\gamma,\delta)= \Ann(c,\gamma,\delta) \cap \Pow_{n,\lambda}(c).
    \end{equation*}
We call $c$ the {\em center} of $\Ann(c,\gamma,\delta)$.
  \end{enumerate}
\end{definition}

\begin{remark}
For each $n$ there are only finitely many cosets of $P_n$ in
$K$, each represented by some $\lambda\in \NN$. Thus, we
consider $\lambda$ to be a term in the language, rather than a
parameter.
\end{remark}

% \begin{prop}[Hensel-Rychlik Theorem \cite{hrt}]
%   Take $f\in \ZZ_p[X]$. Suppose there is $x_0\in \ZZ_p$ such that
%   $v(f(x_0))>2 v(f'(x_0))$. Then, there is a unique $x\in \ZZ_p$ such
%   that $f(x)=0$ and $v(x-x_0)\geq v(f(x_0))-v(f'(x_0))$.
% \end{prop}

\begin{prop}
A cell is cell-like.
\end{prop}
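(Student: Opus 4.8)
The plan is to show that a cell $\Cell_{n,\lambda}(c,\gamma,\delta)$ is cell-like by exhibiting a single $k\in\NN$ such that whenever $x$ lies in the cell and $\pi_k(x'-c)=\pi_k(x-c)$, the point $x'$ also lies in the cell. Recall from the definition that the cell is $\Ann(c,\gamma,\delta)\cap\Pow_{n,\lambda}(c)$, with $c$ serving as the center for the cell-like formula as well. The key observation is that membership in both the annulus and the power coset is controlled by the quotient $\pi_k(x-c)=(x-c)(1+\mm^k)$ once $k$ is chosen large enough, because $\pi_k$ records the valuation $v(x-c)$ together with the class of $x-c$ modulo $(1+\mm^k)$, and the latter refines the coset structure of $P_n$.

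First I would handle the annulus. Membership $x\in\Ann(c,\gamma,\delta)$ is precisely the condition $\gamma\geq v(x-c)\geq\delta$, which depends only on $v(x-c)$. Since $\pi_k(x-c)=\pi_k(x'-c)$ forces $x-c$ and $x'-c$ to differ by a factor in $1+\mm^k$, and any element of $1+\mm^k$ has valuation $0$, we get $v(x-c)=v(x'-c)$ immediately; hence the annulus condition is preserved for \emph{any} $k\geq 1$. Next I would handle the power coset. Membership $x\in\Pow_{n,\lambda}(c)$ means $x-c\in\lambda P_n$, i.e.\ $(x-c)/\lambda$ is an $n$-th power (interpreting $\lambda$ as a fixed coset representative, per the Remark). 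The crucial arithmetic fact about $p$-adic fields is that $1+\mm^k\subseteq P_n$ for $k$ sufficiently large relative to $n$: for $p\nmid n$ this holds already with $k=1$ via Hensel's lemma, and in general $1+\mm^k$ consists of $n$-th powers once $k>2v(n)$ (or the analogous bound in $K$). Choosing $k$ to exceed this threshold, if $x-c\in\lambda P_n$ and $x'-c=(x-c)u$ with $u\in 1+\mm^k\subseteq P_n$, then $x'-c=\lambda P_n\cdot P_n\subseteq\lambda P_n$, so $x'\in\Pow_{n,\lambda}(c)$ as well.

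Taking $k$ large enough to satisfy both requirements simultaneously, and recalling that the center $c$ of the cell is exactly the center $y_0$ of the cell-like formula, we conclude that $\phi(x;c,\dots)\wedge \pi_k(x-c)=\pi_k(x'-c)$ implies $\phi(x';c,\dots)$, which is precisely the cell-like condition. The main obstacle is the power-coset step: I must verify that $1+\mm^k$ is contained in the group $P_n$ of $n$-th powers for $k$ large, using Hensel's lemma to solve $T^n=u$ for $u\in 1+\mm^k$. The valuation estimate ensuring the derivative $nT^{n-1}$ does not obstruct the lift (so that the Newton approximation converges) is the one genuinely nontrivial point, and it is exactly here that the hypothesis that the residue characteristic and the ramification of $K$ are controlled (as in $\QQ_p$) is used; everything else reduces to the valuation-preserving property of $\pi_k$ recorded in Proposition~\ref{prop-pi}.
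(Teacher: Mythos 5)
Your proof is correct and takes essentially the same route as the paper: split the cell into the annulus part, which is cell-like for any $k$ since elements of $1+\mm^k$ have valuation $0$, and the power-coset part, which you handle exactly as the paper does via Hensel--Rychlik, by showing $1+\mm^k\subseteq P_n$ for $k$ large (e.g.\ $k>2v(n)$) so that multiplying $x-c\in\lambda P_n$ by a factor in $1+\mm^k$ stays in $\lambda P_n$. One cosmetic remark: the paper's definition of cell-like allows the center itself to change ($\pi_k(x-y_0)=\pi_k(x'-y_0')$ with $y_0\neq y_0'$), while you state only the fixed-center case, but your argument applies verbatim since it uses nothing about the factor $u=(x'-y_0')/(x-y_0)\in 1+\mm^k$ beyond $v(u)=0$ and $u\in P_n$.
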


\begin{proof}
  It is clear that $\Ann(c,\gamma, \delta)$ is cell-like
  with center $c$ (as witnessed by any $k\in \NN$). It suffices to show
  that $\Pow_{n,\lambda}(c)$ is also cell-like with center $c$.
  Using the Hensel-Rychlik Theorem (see \cite{valbook}) one can show that
  for each $n\in \NN$ there is $k\in \NN$ such that for all
  $\lambda\in \NN$, if $z\in \lambda P_n$ and $\pi_k(z)=\pi_k(z')$, then
  $z' \in\lambda P_n$. Thus, this $k$ witnesses that
  $\Pow_{n,\lambda}(c)$ is cell-like with center $c$.
\end{proof}

By the cell-decomposition theorem (see \cite{padicanaly}), every
definable subset of $K$ is a finite union of cells. Suppose there was
an ICT pattern in $K$. Then, by Fact~\ref{union}, we
get an ICT pattern of cells in $K$. However, this contradicts the
results of the previous section and hence we have:

\begin{thm} $Th(\QQ_p)$ is dp-minimal.
\end{thm}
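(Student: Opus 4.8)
The plan is to leverage the entire machinery built up in the preceding sections, reducing the dp-minimality of $Th(\QQ_p)$ to the cell-like case already handled by Theorem~\ref{celllike}. The first step is to pass to a sufficiently saturated elementary extension $K$ of $\QQ_p$; since dp-minimality is a property of the theory, it suffices to show $K$ has no ICT pattern. By construction $K$ is a Henselian valued field with value group $\ZZ$ and finite residue field, so all the hypotheses of Theorem~\ref{celllike} are satisfied, and we know there is no ICT pattern in $K$ consisting of cell-like formulae.

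The key reduction is then to show that \emph{any} ICT pattern in $K$ can be converted into a cell-like one. The main tool here is the cell-decomposition theorem for $p$-adic fields (see \cite{padicanaly}), which guarantees that every definable subset of $K$ is a finite union of cells. Since we have already verified (via the Hensel--Rychlik Theorem) that each cell is cell-like, a formula defining an ICT pattern can be written as a finite disjunction of cell-defining formulae. At this point Fact~\ref{union} does exactly the work we need: if $\phi(x,\ob{y})$ and $\psi(x,\ob{y})$ witness the failure of dp-minimality and $\phi$ is a disjunction $\phi_1 \vee \dots \vee \phi_n$, then one of the disjuncts $\phi_l$ together with $\psi$ still witnesses non-dp-minimality. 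Applying this fact to each of the two formulae in turn, we extract from an arbitrary ICT pattern a pattern in which both formulae define single cells, hence are cell-like.

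Assembling these pieces, suppose for contradiction that there is an ICT pattern in $K$. By cell decomposition each of the two witnessing formulae defines a finite union of cells, and two applications of Fact~\ref{union} yield an ICT pattern in which both formulae are cell-like. This directly contradicts Theorem~\ref{celllike}, which asserts that no such cell-like ICT pattern exists in $K$. Therefore $K$, and hence $\QQ_p$, is dp-minimal.

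I expect the step requiring the most care to be the verification that cells are genuinely cell-like in the precise sense of the definition, which is where the Hensel--Rychlik input enters: one must produce, uniformly in the coset representative $\lambda$, a single $k \in \NN$ witnessing that membership in $\lambda P_n$ is determined by $\pi_k$ of the relevant difference. That technical heart, however, has already been isolated in the proposition preceding the theorem, so the theorem itself is essentially a clean bookkeeping argument combining cell decomposition, Fact~\ref{union}, and Theorem~\ref{celllike}.
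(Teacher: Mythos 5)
Your proposal is correct and matches the paper's own proof essentially verbatim: pass to a saturated elementary extension $K$ satisfying the hypotheses of Theorem~\ref{celllike}, use the cell-decomposition theorem together with the proposition that cells are cell-like (via Hensel--Rychlik), and apply Fact~\ref{union} to reduce an arbitrary ICT pattern to a cell-like one, contradicting Theorem~\ref{celllike}. You even correctly identify the Hensel--Rychlik uniformity in $\lambda$ as the technical heart, exactly as the paper isolates it in the preceding proposition.
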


\bibliography{../alfref}
\bibliographystyle{plain}

\end{document}